\documentclass[11pt]{amsart}
\usepackage{setspace}
\usepackage{graphicx}
\usepackage{geometry}
\usepackage{multirow}
\usepackage{commath}
\usepackage{mdframed}
\usepackage{amsmath, amsthm}
\usepackage{bbm}
\usepackage{amssymb}
\usepackage{array}
\newcolumntype{M}[1]{>{\centering\arraybackslash}m{#1}}
\usepackage{mathtools}

\usepackage{amsthm}
\usepackage{float}
\usepackage{caption}
\usepackage{color}
\usepackage{enumerate}
\usepackage{tikz}
\usepackage{hyperref}
\hypersetup{
   colorlinks = true, 
   linkcolor = blue, 
   filecolor = magenta,      
   urlcolor = black, 
}
\usepackage{url}
\expandafter\def\expandafter\UrlBreaks\expandafter{\UrlBreaks
  \do\a\do\b\do\c\do\d\do\e\do\f\do\g\do\h\do\i\do\j
  \do\k\do\l\do\m\do\n\do\o\do\p\do\q\do\r\do\s\do\t
  \do\u\do\v\do\w\do\x\do\y\do\z\do\A\do\B\do\C\do\D
  \do\E\do\F\do\G\do\H\do\I\do\J\do\K\do\L\do\M\do\N
  \do\O\do\P\do\Q\do\R\do\S\do\T\do\U\do\V\do\W\do\X
  \do\Y\do\Z}
\usepackage{setspace}
\usepackage{systeme}
\usepackage{tabularx}
\usepackage{cite}
\usepackage{etoolbox}
\usepackage[british]{babel}
\newtheorem{theorem}{Theorem}[section]
\newtheorem{lemma}[theorem]{Lemma}
\newtheorem{cor}[theorem]{Corollary}

\newtheorem{problem}[theorem]{Problem}
\newtheorem{fact}[theorem]{Fact}

\newtheorem{prop}[theorem]{Proposition}

\newtheorem*{claim*}{Claim}
\newtheorem*{theorem*}{Theorem}
\newtheorem*{prop*}{Proposition}
\newtheorem*{lemma*}{Lemma}
\newtheorem*{keyobservation*}{Key Observation}
\newtheorem*{conjecture*}{Conjecture}
\newtheorem*{mainthm*}{Main Theorem}
\numberwithin{equation}{section}
\theoremstyle{definition}

\newtheorem*{notation*}{Notation}
\newtheorem{defn}[theorem]{Definition}
\newtheorem{example}[theorem]{Example}
\newtheorem{remark}[theorem]{Remark}

\newcommand{\R}[0]{\mathbb{R}}
\newcommand{\Q}[0]{\mathbb{Q}}
\newcommand{\N}[0]{\mathbb{N}}
\newcommand{\Z}[0]{\mathbb{Z}}
\newcommand{\M}[0]{\mathcal{M}}

\newcommand{\ELL}[0]{L}
\newcommand{\monster}[0]{U}
\newcommand{\poly}[0]{\mathrm{poly}}
\newcommand{\tp}[0]{\mathrm{tp}}
\newcommand{\fs}[0]{\mathrm{fs}}
\newcommand{\VC}[0]{\mathrm{VC}}
\newcommand{\Av}[0]{\mathrm{Av}}
\newcommand{\timesdots}[0]{\times\cdots\times}
\counterwithout{equation}{section}
\subjclass[2020]{03C45, 03C98, 05C35, 05C65, 05C75.}
\keywords{Strong honest definitions, regularity lemma, homogeneity, higher-arity, hypergraph, distality}
\title{Homogeneous hypergraph regularity lemmas via $k$-strong honest definitions}
\author{Mervyn Tong}
\address{Department of Pure Mathematics and Mathematical Statistics, Centre for Mathematical Sciences, Wilberforce Road, Cambridge CB3 0WB, United Kingdom}
\email{hwmt3@cam.ac.uk}
\date{\today}
\begin{document}
\begin{abstract}
    We prove that $(k+1)$-uniform hypergraphs definable in an NIP strongly $k$-distal structure satisfy a homogeneous regularity lemma --- they can be partitioned into a bounded number of simplicial complexes, most of which are homogeneous (meaning that the restriction of the hypergraph to the simplicial complex is either complete or empty). Furthermore, the parts of the partition can be chosen uniformly definably, and the size of the partition is polynomial in the reciprocal of the error parameter. This extends the homogeneous regularity lemma proven by Chernikov and Starchenko for hypergraphs definable in a distal structure.

    We prove this by introducing $k$-strong honest definitions and showing that an NIP structure is strongly $k$-distal if and only if every formula $\varphi(x_1, ..., x_k; y)$ has a $k$-strong honest definition. This extends the theory of strong honest definitions in distal structures to the higher-arity setting.    
\end{abstract}
\maketitle
\section{Introduction}\label{secintro}
Let $G=(V\sqcup V, E)$ be a finite bipartite graph. Szemer\'edi's celebrated regularity lemma says that, given an error parameter $\delta>0$, there is a partition of $V$ whose size is bounded in terms of $\delta$, inducing a partition of $V^2$ into boxes, such that $E$ is `quasirandom' with respect to most of the boxes: that is, the distribution of $E$ within the box is approximately random (equivalently, regular or uniform).
\patchcmd{\thmhead}{(#3)}{#3}{}{}
\begin{theorem}[\cite{szemerediregularitylemma}]
    For all $\delta>0$, there is $K\in\N$ such that the following holds.

    Let $G=(V\sqcup V,E)$ be a finite bipartite graph. Then there is a partition $V=V_1\sqcup \cdots \sqcup V_K$ such that
    \[\sum_{V_i\times V_j\text{ }\delta\text{-regular}}|V_i\times V_j|\geq (1-\delta) |V|^2.\]
\end{theorem}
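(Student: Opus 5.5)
The plan is the standard energy-increment (index) argument. For a partition $\mathcal{P}=\{V_1,\dots,V_K\}$ of $V$, define its index
\[q(\mathcal{P})=\sum_{i,j}\frac{|V_i||V_j|}{|V|^2}\,d(V_i,V_j)^2,\qquad d(A,B)=\frac{|E\cap(A\times B)|}{|A||B|}.\]
The index always lies in $[0,1]$. It is the squared $L^2$ norm of the conditional expectation of $\mathbbm{1}_E$ onto the $\sigma$-algebra generated by the boxes $V_i\times V_j$. By Cauchy--Schwarz (equivalently, Pythagoras for conditional expectations), refining $\mathcal{P}$ never decreases $q$. Here a box $V_i\times V_j$ is $\delta$-regular if every $A\subseteq V_i$, $B\subseteq V_j$ with $|A|\geq\delta|V_i|$ and $|B|\geq\delta|V_j|$ satisfies $|d(A,B)-d(V_i,V_j)|\leq\delta$.

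The key local step is as follows. Suppose $V_i\times V_j$ is not $\delta$-regular, witnessed by $A\subseteq V_i$ and $B\subseteq V_j$. Splitting $V_i$ into $\{A,V_i\setminus A\}$ and $V_j$ into $\{B,V_j\setminus B\}$ raises the contribution of this box to the index by at least
\[\delta^4\,\frac{|V_i||V_j|}{|V|^2}.\]
This follows from the variance identity: the mean-square deviation of the refined densities from $d(V_i,V_j)$ is at least the weight $\delta^2$ of the subbox $A\times B$ times the squared deviation $\delta^2$.

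The global step goes like this. Suppose $\mathcal{P}$ fails the conclusion, so the irregular boxes have total measure greater than $\delta|V|^2$. For each irregular pair, take witnesses and let $\mathcal{P}'$ be the common refinement of $\mathcal{P}$ by all the witness sets. Each $V_i$ then splits into at most $2^{2K}$ pieces, so $|\mathcal{P}'|\leq K\cdot 4^{K}$. Since refinement is monotone on each box separately and the local gains add up, we get
\[q(\mathcal{P}')\geq q(\mathcal{P})+\delta^5.\]
(Diagonal pairs $i=j$ and ordered-versus-unordered pairs need a little bookkeeping, but only affect constants.)

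To finish, start from the trivial partition and iterate. Because $q\leq 1$, the process stops after at most $\delta^{-5}$ steps, at a partition satisfying the conclusion. Its size is bounded by iterating $K\mapsto K4^K$ that many times, which is a tower-type bound depending only on $\delta$. If one wants $K$ to be exactly a fixed number rather than "at most", pad the partition with empty parts, or equivalently let $K$ be the maximum bound. The empty parts contribute zero measure, so the statement is unaffected.

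I expect the main obstacle to be the bookkeeping in the global step. The witnesses for different irregular pairs must be combined into a single common refinement without losing the gain. This works because the gain for each box $V_i\times V_j$ survives any further refinement of that box, by monotonicity of the index under refinement applied box by box. The rest is routine.
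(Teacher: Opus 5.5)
Your proposal is correct. It is the standard index (energy-increment) proof, and all of its steps check out:
\begin{itemize}
\item the local variance estimate gives a gain of $\delta^4|V_i||V_j|/|V|^2$ per irregular box;
\item each $V_i$ contains at most $2K$ witness sets, namely the $A_{ij}$ and the $B_{ji}$, so $|\mathcal{P}'|\leq K4^K$;
\item the global increment of $\delta^5$ and the padding with empty parts are both fine.
\end{itemize}
The bookkeeping you flag as the main obstacle is actually automatic, including for diagonal and ordered pairs. For every box $V_i\times V_j$, the common refinement restricted to that box refines $\{A_{ij},V_i\setminus A_{ij}\}\times\{B_{ij},V_j\setminus B_{ij}\}$, so monotonicity applied box by box preserves each local gain.

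The paper gives no proof of this statement; it is quoted from Szemer\'edi as background, so there is no line-by-line comparison to make. The useful contrast is with the paper's own regularity lemma (Theorem \ref{kSHDreglemma}), which involves no iteration at all. There, a single $\varepsilon$-approximation produces a finite parameter set $B$, and the partition is read off as the Boolean atoms of finitely many definable sets built from $B$. This gives homogeneous rather than merely regular pieces, and a polynomial bound on the number of parts. That route depends on NIP together with (strong) honest definitions. Yours works for every finite graph, but necessarily at the cost of a tower-type bound, as \cite{gowers1997} shows.
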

\patchcmd{\thmhead}{#3}{(#3)}{}{}
Here, `$\delta$-regularity' is the relevant notion of quasirandomness --- see Definition \ref{defndeltaregular}. The size $K$ of the partition is bounded in terms of the error $\delta$ but in general very large: a tower of exponentials whose height is polynomial in $\delta^{-1}$ \cite{gowers1997}.

Generalising Szemer\'edi's regularity lemma to $k$-partite $k$-uniform hypergraphs $(\bigsqcup_{i=1}^k V, E)$ was a surprisingly complicated task. One of the key observations is that $V^k$ should be partitioned not into boxes induced by a partition of $V$ but into `simplicial complexes' induced by partitions of $V^1, ..., V^{k-1}$ (see Section \ref{sechypreg}), of which boxes are a special case. Indeed, it is too much to ask for a general hypergraph to be partitioned into quasirandom boxes; in any such attempt, the associated notion of quasirandomness is too weak to be applicable in certain situations. On the other hand, Nagle, R\"odl, Schacht, and Skokan \cite{nrs, rodlskokan} and Gowers \cite{gowersregularity} (among others --- see \cite{regularityhistory} for an account) have proven that every hypergraph can be partitioned into simplicial complexes, most of which are quasirandom in a strong sense. (The precise notions of quasirandomness are not important for our exposition, so we refer the reader to the papers cited above and \cite{gowershypergraphs}.)

Let us return to graphs for now. Since Szemer\'edi proved his vanilla (read `original', not `pedestrian') regularity lemma, it has been endowed with many flavours: there has been an active strand of research aimed at strengthening the conclusion of Szemer\'edi's regularity lemma by restricting the class of graphs under consideration, such as in \cite{malliarisshelah, semialgebraicregularity, lovaszszegedy, alonfischernewman, regularitylemma}. We are particularly interested in when the notion of quasirandomness can be strengthened to \textit{homogeneity}: given a bipartite graph $(V\sqcup V, E)$, we say that $W\subseteq V^2$ is \textit{($E$-)homogeneous} if the restriction of $E$ to $W$ is complete or empty. We seek classes $\mathcal{G}$ of bipartite graphs that admit a \textit{homogeneous regularity lemma}: for all $(V\sqcup V, E)\in\mathcal{G}$, $V^2$ be partitioned into a bounded number of boxes (induced by a partition of $V$), most of which are homogeneous. Model theory provides an answer through \textit{distality}, as shown by Chernikov and Starchenko. Distality is a refinement of NIP (that is, the setting of bounded VC-dimension), and is a vast generalisation of o-minimality; distal theories are known to have many desirable combinatorial properties (see, for example, \cite{regularitylemma, cuttinglemma, mypaperblms, anderson}). See Section \ref{sechigherarity} for a definition of distality.

\patchcmd{\thmhead}{(#3)}{#3}{}{}
\begin{theorem}[\cite{regularitylemma}]\label{abridgeddistalgraphreg}
    Let $T$ be a distal $L$-theory, $M\models T$, and let $\varphi(x,y)\in L(M)$ with $|x|=|y|=: d$. Then, for each $\delta>0$, there is a natural number $K\leq \poly_\varphi(\delta^{-1})$ such that the following holds.

    Let $V\subseteq M^d$ be finite. Then there is a partition $V=V_1\sqcup\cdots \sqcup V_K$ such that
    \[\sum_{V_i\times V_j\text{ }\varphi\text{-homogeneous}}|V_i\times V_j|\geq (1-\delta) |V|^2.\]
\end{theorem}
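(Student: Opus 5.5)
The plan follows Chernikov and Starchenko: use strong honest definitions for distal theories together with a cutting lemma in the style of Matoušek. First I would recall that in a distal theory every formula $\varphi(x;y)$ has a \emph{strong honest definition}. This is a formula $\psi(x;y_1,\dots,y_m)$ such that for every finite $B\subseteq M^d$ with $|B|\geq 2$ and every $a\in M^d$, there is a tuple $b\in B^m$ with $\models\psi(a;b)$, and the set $\psi(M;b)$ is contained in a single $\varphi$-type over $B$. In other words, $\psi(x;b)\vdash \tp_\varphi(a/B)$. Since $\psi$ is NIP, the dual family $\{\psi(M;b): b\in M^{m}\}$ has bounded VC-dimension, and hence bounded VC-density.

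The second step is a distal cutting lemma. Take a finite $V$ and $\delta>0$. Choose a random sample $S\subseteq V$ of size about $r\log r$, where $r=\poly(\delta^{-1})$. Consider the cells $\psi(M;b)\cap V$ for $b\in S^m$, and in particular the cells that are minimal with respect to containing no point of $S$ in their interior. The epsilon-net theorem for the bounded-VC-dimension family built from $\psi$ then shows that $V$ can be covered by at most $t\leq\poly(r)$ sets $C_1,\dots,C_t$, each definable by an instance of $\psi$ (or a Boolean combination of a bounded number of instances). These sets have the property that each $C_i$ is \emph{crossed} by at most $|V|/r$ of the sets $\varphi(M;b)$ with $b\in V$. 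Here ``crossed'' means that $\varphi(M;b)\cap C_i$ is neither $\emptyset$ nor $C_i$. Refining the cover into a partition costs only a polynomial blow-up, since the number of atoms of the Boolean algebra is polynomial in $t$ by bounded VC-density.

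The third step is counting. Partition $V$ into the cells $V_1,\dots,V_K$, with $K\leq\poly(\delta^{-1})$. For each pair $(V_i,V_j)$, the pair fails to be $\varphi$-homogeneous only if some $b\in V_j$ has $\varphi(M;b)$ crossing $V_i$, or some $b\in V_j$ lies on the opposite side from another element of $V_j$. Bounding via the first condition alone is too crude. Instead I would use the symmetric version, obtained by applying the cutting lemma to both $\varphi(x;y)$ and $\varphi^{\mathrm{opp}}(y;x)$ and taking the common refinement. With both applied, the number of pairs $(a,b)\in V^2$ such that $b$'s fiber crosses $a$'s cell is at most $|V|\cdot|V|/r$, and the same bound holds for $a$'s fiber crossing $b$'s cell. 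Then a standard double-counting/Markov argument shows the following: with $r\approx\delta^{-1}$ chosen suitably, and after discarding cells that are too small and merging them at total cost $\delta|V|^2/2$, all but a $\delta$-fraction of $V^2$ lies in boxes $V_i\times V_j$ that are $\varphi$-homogeneous. Homogeneity of a box holds as soon as no fiber from either side crosses the opposite cell.

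The main obstacle I anticipate is making that last implication rigorous. Even if $V_i$ is not crossed by any $\varphi(M;b)$ with $b\in V_j$, different $b$'s could give different constant values. I would handle this with a further refinement: partition each $V_j$ according to which side of $\varphi(M;b)$ each $b$ places $V_i$. This blows up $K$ only polynomially if done via the type-partition induced by the cutting of the dual formula. Alternatively, I would absorb it into the bad pairs by counting edges where the $\varphi$-value differs from the box majority.
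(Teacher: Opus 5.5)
Your third step has a genuine gap. The cutting lemma bounds only the \emph{number} of fibres crossing each part, by $|V|/r$, and this remains true even when it is applied to both $\varphi(x;y)$ and $\varphi^{\mathrm{opp}}(y;x)$. But a box $V_i\times V_j$ is non-homogeneous as soon as a \emph{single} $b\in V_j$ (or a single $a\in V_i$) crosses the opposite part. Since $K$ does not depend on $|V|$, the up to $|V|/r$ crossing elements can be spread over all $K$ parts. Double counting and Markov then tell you only that in most boxes the crossing elements are a small fraction of $V_j$, never that there are none. So ``few crossing pairs'' does not give ``most mass in homogeneous boxes''.

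This is not a technicality. Take $\varphi(x,y)\colon x_1\neq y_2$ on $\mathbb{R}^2$, definable in the distal structure $(\mathbb{R},<)$. Let $V_l$ consist of the points $(c_{lj},g_{lj})$ and $(g'_{jl},c_{jl})$ for $j\in[K]$, together with points with fresh coordinates, all $c,g,g'$ distinct. Every part is crossed by exactly $K$ fibres on each side, far below $|V|/r$ once the parts are large. Yet every box $V_i\times V_j$ contains the non-edge $\bigl((c_{ij},g_{ij}),(g'_{ij},c_{ij})\bigr)$ together with edges, so no box is homogeneous. Discarding small cells does not help.

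The missing idea is to refine the $y$-side by the crossing \emph{type} of each $b$ with respect to each of the polynomially many definable cells, rather than to count crossings. This is what the paper's proof of Theorem~\ref{kSHDreglemma} does; at $k=1$ its cells $\psi_1(x_2,c_1)\wedge\psi_2(x_1,c_2)$ are boxes, and the argument specialises to this statement.

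Concretely, for a cell $C=\psi(M;c)$ with $c\in S^m$ and $\sigma\in\{0,1\}$, put $D^\sigma_c:=\{b: C\subseteq\varphi^\sigma(M;b)\}$; this is the paper's $D^\sigma_\gamma$. These sets are instances of two fixed formulas with parameters from a set of size $\poly(r)$. By VC-density their Boolean atoms therefore number $\poly(r)$. That, not ``the cutting of the dual formula'', is why the blow-up is polynomial.

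Now let $A$ be an $x$-side part contained in a cell $C$, and let $Q$ be a $y$-side atom. Then $A\times Q$ is homogeneous unless $Q$ misses $D^0_c\cup D^1_c$, that is, unless $Q$ consists only of elements whose fibres cross $C$. So the non-homogeneous mass is at most $\sum_A|A|\cdot|V|/r\le|V|^2/r$. The common refinement of the two sides then gives a single partition of $V$. With this device, the symmetric cutting, the Markov step and the merging of small cells are all unnecessary.

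Your final paragraph gestures at such a refinement, but only as a patch for the ``different constant values'' issue. To work, it must record crossing versus non-crossing, not just ``which side''. Your alternative, counting disagreements with the box majority, would give only $\delta$-almost homogeneity, which is weaker than the statement.

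A smaller point concerns your second step. The surviving cells should be those $\psi(M;c)$, $c\in S^m$, not crossed by any $\varphi(M;s)$ with $s\in S$, where $S$ samples the parameters; not those ``containing no point of $S$''. These cells cover $M^d$ by the strong honest definition, and the $\varepsilon$-net property for $\{b:\varphi(M;b)\text{ crosses }\psi(M;c)\}$ gives the crossing bound.
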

\patchcmd{\thmhead}{#3}{(#3)}{}{}

In other words, if $\mathcal{G}$ is the class of finite graphs definable by a fixed binary relation in a distal theory, then $\mathcal{G}$ admits a homogeneous regularity lemma (with the additional feature that the size of the partition is much smaller than in general: it is polynomial in the reciprocal of the error $\delta$). Thus, distal theories constitute a general setting for homogeneous regularity lemmas.

What about hypergraphs? We now seek classes $\mathcal{H}$ of $k$-partite $k$-uniform hypergraphs that admit a \textit{homogeneous regularity lemma}: for all $(\bigsqcup_{i=1}^k V, E)\in\mathcal{H}$, $V^k$ be partitioned into a bounded number of \textit{simplicial complexes} $W$ (induced by partitions of $V^1, ..., V^{k-1}$), most of which are homogeneous (that is, the restriction of $E$ to $W$ is complete or empty). It turns out that the immediate analogue of Theorem \ref{abridgeddistalgraphreg} for hypergraphs holds.

\patchcmd{\thmhead}{(#3)}{#3}{}{}
\begin{theorem}[\cite{regularitylemma}]\label{abridgeddistalhypergraphreg}
    Let $T$ be a distal $L$-theory, $M\models T$, and let $\varphi(x_1, ..., x_k)\in L(M)$ with $|x_1|=\cdots =|x_k|=: d$. Then, for each $\delta>0$, there is a natural number $K\leq \poly_\varphi(\delta^{-1})$ such that the following holds.

    Let $V\subseteq M^d$ be finite. Then there is a partition $V=V_1\sqcup\cdots \sqcup V_K$ such that
    \[\sum_{V_{i_1}\timesdots V_{i_k}\text{ }\varphi\text{-homogeneous}}|V_{i_1}\timesdots V_{i_k}|\geq (1-\delta) |V|^k.\]
\end{theorem}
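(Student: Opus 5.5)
I plan to prove Theorem \ref{abridgeddistalhypergraphreg} by induction on $k$, using the graph case (Theorem \ref{abridgeddistalgraphreg}) and, behind it, the ingredient that drives the distal graph case: strong honest definitions combined with the distal cutting lemma. The key input is that in a distal theory every formula $\varphi(x;y)$ has a strong honest definition $\psi(x;z)$. That is, for every finite $B$ and every $a$ there is a tuple $b\in B$ with $a\models\psi(x;b)$, and $\psi(x;b)$ implies $\operatorname{tp}_\varphi(a/B)$. Combined with the VC bound for $\psi$ (distal structures are NIP), this yields a cutting lemma. For finite $B$ of size $N$ and $r\ge 1$, there is a cover of $M^d$ by $O(r^{c})$ sets, each uniformly $\psi$-definable, such that each set is crossed by at most $N/r$ of the sets $\varphi(M;b)$, $b\in B$. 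Here $c$ depends only on $\varphi$.

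I will view $\varphi(x_1,\dots,x_k)$ as a binary formula $\varphi(x_1;x_2\dots x_k)$. I then apply the cutting lemma in its averaged form, a fractional version over the uniform measure on $V^{k-1}$ obtained via $\varepsilon$-approximations. This gives a partition $V=V_1\sqcup\dots\sqcup V_{K_1}$ with $K_1\le\poly(\delta^{-1})$. For all but a $\delta/k$-fraction of $\bar b\in V^{k-1}$, each $V_i$ is either entirely inside or entirely outside $\varphi(M;\bar b)$, with the exceptional $(x_1,\bar b)$-pairs summing to at most $\tfrac{\delta}{k}|V|^k$. I do the same for each coordinate $j$, viewing $\varphi$ as $\varphi(x_j;\bar x_{\ne j})$. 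I then take the common refinement of these $k$ partitions, which still has polynomial size.

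Next I must show that most boxes $V_{i_1}\times\dots\times V_{i_k}$ are homogeneous. A box is non-homogeneous exactly when it contains an edge $\bar a$ and a non-edge $\bar a'$. Changing one coordinate at a time moves from $\bar a$ to $\bar a'$, so some step changes only coordinate $j$ inside $V_{i_j}$, with the other coordinates fixed at some $\bar b$, and the truth value flips. That means $V_{i_j}$ is crossed by $\varphi(x_j;\bar b)$, where $\bar b$ is a point of the product of the other parts. A counting argument is then needed to turn ``some crossing exists'' into ``the box carries crossing mass''. This is the main obstacle: a single crossing $\bar b$ does not make the box contribute much crossing mass. To overcome it, I will strengthen the cutting step so that, for each $j$, the partition of the $j$-th coordinate is adapted to a partition of $V^{k-1}$ that itself comes from the induction hypothesis. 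By induction, the other $k-1$ coordinates are partitioned into polynomially many boxes. On each such box, the strong honest definitions of the $(k-1)$-ary formulas $\varphi(\bar x_{\ne j}; a)$ show that the whole box behaves uniformly on the relevant types. Then crossing is a property of pairs (part, box), and the standard counting $\sum|\text{crossed boxes}|\le\delta|V|^k$ goes through.

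If this inductive bookkeeping becomes too delicate, I would fall back on a different route. There I would prove directly the $k$-ary version of the distal cutting lemma: via strong honest definitions for $\varphi(x_1;x_2\dots x_k)$, the bounded VC-density of definable families in distal structures, and the Chernikov–Starchenko definable Zarankiewicz/``$\delta$-homogeneous pair'' argument iterated $k-1$ times. Each iteration costs only a polynomial factor, so the bound $K\le\poly_\varphi(\delta^{-1})$ is preserved throughout.
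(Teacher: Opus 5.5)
\textbf{There is a genuine gap, at exactly the step you flag as the main obstacle.} Everything up to that step is sound. The path argument shows that a box $Q_1\timesdots Q_k$ is homogeneous as soon as, for every $j$, no $\bar b\in\prod_{i\neq j}Q_i$ crosses $Q_j$. For each $j$ the cutting step gives the \emph{weighted} bound $\sum_{Q}|Q|\cdot\#\{\bar b\in V^{k-1}:\bar b\text{ crosses }Q\}\le\frac{\delta}{k}|V|^k$. Your unweighted phrasing (``for all but a $\delta/k$-fraction of $\bar b$, each $V_i$ is uncrossed'') is false in general, since different parts are crossed by different $\bar b$. Only the weighted form holds, and it is enough.

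The mechanism you give for making crossing constant on (part, box) pairs does not work. A strong honest definition for $\varphi(\bar x_{\neq j};x_j)$ isolates the $\varphi$-type of a \emph{single} tuple $\bar b$ over finitely many parameters; it gives no uniformity across a box. The natural way to make crossing of $Q_j$ constant on a box $R$ through the formulas $\varphi(\bar x_{\neq j};a)$ is to make $R$ homogeneous for every $a\in Q_j$. That is up to $|V|$ relations at once, which is essentially the original problem. You also never say which relation the induction hypothesis is applied to. Moreover the dependence runs the wrong way: the box partition of the other coordinates must be adapted to the cells of coordinate $j$, not conversely. The fallback paragraph lists tools but is not an argument.

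\textbf{The missing idea is to feed the crossing relations of the cells into the induction.} Fix the cells $\gamma=\psi(x_j;c)$ from your cutting step first, where $\psi$ is a strong honest definition for $\varphi(x_j;\bar x_{\neq j})$. There are $\poly(\delta^{-1})$ good cells, each crossed by at most $\varepsilon|V|^{k-1}$ tuples. The relation $\mathrm{Cr}(\bar x_{\neq j};c):=\exists x\,\exists x'\,(\psi(x;c)\wedge\psi(x';c)\wedge\varphi(x,\bar x_{\neq j})\wedge\neg\varphi(x',\bar x_{\neq j}))$ is a single $L$-formula; write $\mathrm{Cr}_\gamma:=\mathrm{Cr}(\cdot\,;c)$. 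Proceed as follows.
\begin{enumerate}[(1)]
\item Apply the $(k-1)$-ary case to $\mathrm{Cr}_\gamma$ for each good $\gamma$, with error $\delta'$.
\item Take the common refinement of all these partitions, and give each final part $Q$ one good cell $\gamma(Q)\supseteq Q$.
\item Crossing $Q$ implies crossing $\gamma(Q)$. So a non-homogeneous box lies, for some $j$, either in $Q_j\times(\text{a non-}\mathrm{Cr}_{\gamma(Q_j)}\text{-homogeneous box})$ or in $Q_j\times\{\bar b:\bar b\text{ crosses }\gamma(Q_j)\}$. The total mass is at most $k(\delta'+\varepsilon)|V|^k$.
\end{enumerate}

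\textbf{The induction hypothesis must also be strengthened.} Step (2) refines $\poly(\delta^{-1})$ partitions, so the result stays polynomial only if the hypothesis asserts uniformly definable pieces. Concretely, each piece should be $\eta(x;r)\cap V$ with $\eta$ depending only on the formula and $\delta$, and $K$ should be uniform in the parameter $c$. Then NIP (Sauer--Shelah) bounds the number of Boolean atoms polynomially. Without this you only get $K^{\poly(\delta^{-1})}$ pieces. The statement you induct on contains neither condition, so as set up your induction does not close.

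\textbf{Comparison with the paper.} The paper does not prove this theorem; it cites \cite{regularitylemma}, whose route is: cutting lemma, then definable strong Erd\H{o}s--Hajnal, then regularity. The device you need is the one in the paper's proof of Theorem~\ref{kSHDreglemma}. There, the sets $D^\sigma_\gamma$ of parameters over which a cell is uncrossed with sign $\sigma$ become pieces of the partition directly, and atoms are counted via NIP. In the box setting $D^\sigma_\gamma$ is a $(k-1)$-ary relation, so it must itself be cut by induction with definable pieces. Using the signed $D^\sigma_\gamma$ for coordinate $1$ alone also makes the cutting step for $j\ge2$ and the path argument unnecessary.
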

\patchcmd{\thmhead}{#3}{(#3)}{}{}
Observe, however, that the parts of the partition in Theorem \ref{abridgeddistalhypergraphreg} are boxes. We had previously laboured the point that, in a regularity lemma for hypergraphs, the parts of the partition generally take the shape of simplicial complexes. Thus, one expects there to be a more general setting than distality for hypergraphs that admit a homogeneous regularity lemma, in which the parts of the partition are simplicial complexes (not necessarily boxes).

We show that this is indeed the case. Where would we look for such a setting? Since distality can be seen as a binary notion (for instance, as it is a refinement of NIP\footnote{Of course, relations of arbitrary arity can be defined in an NIP theory, but the fact that such a relation is NIP only makes sense after partitioning its variables into two parts, effectively treating the relation as binary.}), it is not so surprising that while distality provided the right setting for homogeneous \textit{graph} regularity lemmas in Theorem \ref{abridgeddistalgraphreg}, it is too restrictive for the \textit{hypergraph} analogue. It is natural to look to $k$-ary generalisations of distality to provide a more general setting for homogeneous regulairty lemmas for $k$-uniform hypergraphs. This approach is not new to the literature. For example, the NIP regularity lemma for graphs \cite{lovaszszegedy, alonfischernewman} has been extended to an $\text{NIP}_k$ regularity lemma for $(k+1)$-uniform hypergraphs, where $\text{NIP}_k$ is a $(k+1)$-ary generalisation of NIP; this was achieved qualitatively by Chernikov and Towsner \cite{chernikovtowsner}, and quantitatively in the $k=2$ case by Terry and Wolf \cite{terrywolf}, Terry \cite{terryimprovedbound, terrypart3, terrypart4}, and Gishboliner, Shapira, and Wigderson \cite{gishboliner}.\footnote{Here, `qualitatively' (respectively, `quantitatively') refers to the absence (respectively, presence) of bounds obtained for the sizes of the partitions.} Another example concerns \textit{stability}, a binary notion which refines NIP: in \cite{terrywolf}, Terry and Wolf extended the stable regularity lemma for graphs \cite{malliarisshelah} to an $\text{NFOP}_2$ regularity lemma for 3-uniform hypergraphs, where $\text{NFOP}_2$ is a ternary generalisation of stability.

Returning to $k$-ary generalisations of distality, such notions were defined by Walker in \cite{walker}. Walker introduced the notions of \textit{$k$-distality} and \textit{strong $k$-distality} as $(k+1)$-ary generalisations of distality for each $k\in\N^+$, such that strong $k$-distality implies $k$-distality, (strong) $k$-distality implies (strong) $(k+1)$-distality, and 1-distality, strong 1-distality, and distality are equivalent. Our work shows that strong $(k-1)$-distality plus NIP provides the setting we desire for $k$-uniform hypergraphs with a homogeneous regularity lemma. The following is a special case of Corollary \ref{kSHDregcorfinite}.

\begin{theorem}\label{veryabridgedmyhyperreg}
    Let $k\geq 2$. Let $T$ be an NIP, strongly $(k-1)$-distal $L$-theory, $M\models T$, and let $\varphi(x_1, ..., x_k)\in L(M)$ with $|x_1|=\cdots =|x_k|=:d$. Then, for all $\delta>0$, there is a natural number $K\leq \poly_\varphi(\delta^{-1})$ such that the following holds.

    For all finite $V \subseteq M^d$, there is a partition $V^{k-1}=V_1 \sqcup \cdots \sqcup V_K$ inducing the partition $\mathcal{Q}$ of $V^k$ into simplicial complexes given by
    \[\left\lbrace\left\lbrace (v_1, ..., v_k)\in V^k: v_{\neq i}\in V_{j_i}\text{ for all } i\in [k]\right\rbrace: j_1, ..., j_k\in [K]\right\rbrace,\]
    such that $\sum_{Q\in\mathcal{Q}\text{ }\varphi\text{-homogeneous}} |Q|\geq (1-\delta) |V|^k$.
\end{theorem}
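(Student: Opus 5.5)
The plan follows the Chernikov--Starchenko strategy for the distal regularity lemma, with the ``cuts'' coming from $(k-1)$-strong honest definitions. Regard $\varphi(x_1,\dots,x_k)$ as a formula $\varphi(x_1,\dots,x_{k-1};x_k)$ in $k-1$ object variables and one parameter variable. By the main equivalence announced in the abstract, NIP together with strong $(k-1)$-distality gives $\varphi$ a $(k-1)$-strong honest definition. I expect this to be a formula $\psi(x_1,\dots,x_{k-1};z)$ such that, for every finite $C\subseteq M^d$ with $|C|\geq 2$ and every tuple $a=(a_1,\dots,a_{k-1})$, some $c\in C^{|z|}$ satisfies $\models\psi(a;c)$. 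Moreover, every $a'$ with $\models\psi(a';c)$ has the same $\varphi$-type over $C$ as $a$, where ``over $C$'' refers to instances $\varphi(a';b)$ with $b\in C$. The important point is that $\psi$ acts on the whole $(k-1)$-tuple jointly. This is why the cells live in $V^{k-1}$ rather than being products of cells in $V$.

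\textbf{Step 1 (cutting lemma).} Consider the set system on $V^{k-1}$ given by $\{\psi(V^{k-1};c):c\in M^{|z|}\}$. This family has bounded VC-dimension by NIP, and the family of $\varphi$-fibres $\varphi(V^{k-1};b)$ does as well. I would apply the distal cutting lemma argument with $\varepsilon$-nets: take a random or $\varepsilon$-net sample $C\subseteq V$ of size $\poly(\delta^{-1})$. The sets $\psi(V^{k-1};c)$ for $c\in C^{|z|}$ form a polynomial-size cover of $V^{k-1}$, and each such set is $\varphi(\cdot;b)$-homogeneous for every $b\in C$. Refine this cover into a partition $V^{k-1}=V_1\sqcup\dots\sqcup V_K$ by taking atoms of the Boolean algebra generated by these sets, choosing one cover set per point. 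This keeps $K\leq\poly(\delta^{-1})$ and keeps the parts uniformly definable.

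\textbf{Step 2 (counting).} We need most tuples $(v_1,\dots,v_k)$ to lie in homogeneous simplicial cells, where the cell indexed by $(j_1,\dots,j_k)$ records the coordinates $v_{\neq i}\in V_{j_i}$. Here I would use the symmetric version: apply Step 1 for each choice of distinguished variable $i\in[k]$. Then use a Fubini-style count, in the style of the proof of Theorem~\ref{abridgeddistalhypergraphreg}. A tuple $v$ fails to lie in a homogeneous cell only if, for some $i$, the point $v_i$ ``crosses'' the cell of $v_{\neq i}$. Crossing means that $\varphi(\cdot;v_i)$ is not constant on that cell. By the $\varepsilon$-net property, for each cell the proportion of $b\in V$ that cross it is at most $\varepsilon$. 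This is because the crossing set is definable in a family of bounded VC-dimension, and no element of the sample $C$ lies in it. Summing over cells gives at most $k\varepsilon|V|^k$ bad tuples.

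\textbf{Main obstacle.} The hardest step is the counting in Step 2. Cells are determined simultaneously by $k$ overlapping $(k-1)$-projections, so homogeneity of a single projection with respect to the last coordinate does not immediately give homogeneity of the whole complex. I would first try a refined argument. Fix $i=k$ and the cell $V_{j_k}$ containing $v_{<k}$. If $V_{j_k}$ is $\varphi(\cdot;b)$-homogeneous for all $b$ in $V$ except an $\varepsilon$-fraction, then $\varphi$ restricted to $\{(a,b):a\in V_{j_k}\}$ depends only on $b$. I would then argue, using symmetry and the analogous partitions for the other coordinates, that the value depends only on the remaining indices $j_1,\dots,j_{k-1}$. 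Once $\varepsilon=\delta/k$ is set, the polynomial bound follows from the cutting lemma's bound $K\leq\poly(\varepsilon^{-1})$.
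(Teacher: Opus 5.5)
\textbf{Gap 1: Step~1 rests on the wrong shape of strong honest definition.} Invoking Theorem~\ref{uniformkSHD} is the right first move, but you have guessed the wrong shape for its output. A single formula $\psi(x_1,\dots,x_{k-1};z)$ with $\models\psi(a;c)$ and $\psi(x;c)\vdash\varphi(x;b)\leftrightarrow\varphi(a;b)$ for all $b\in C$ is an ordinary (Chernikov--Simon) strong honest definition for $\varphi$ in the binary partition $(x_1\cdots x_{k-1};x_k)$. Having these for every $\varphi$ is the distal situation (Theorem~\ref{distalequivSHD}), which the hypotheses do not give you. For $k=3$, the structure $\mathfrak{J}(m)$ is NIP and strongly $2$-distal but $\omega$-stable, hence not distal. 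If such $\psi$ existed, the distal argument would already give plain boxes $A\times B$ with $A\subseteq V^{k-1}$ and $B\subseteq V$.

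A $(k-1)$-strong honest definition (Definition~\ref{defnkSHDlegit}) instead has some degree $N$ and consists of:
\begin{itemize}
\item formulas $\psi_i(x_{\neq i},y,z_i)$ for $i\in[k-1]$;
\item a formula $\psi_k(x,z_k)$;
\end{itemize}
where $x=(x_1,\dots,x_{k-1})$ and $y=x_k$. It only guarantees that for each $b\in C$ some $j\in[N]$ satisfies $a\models\psi_k(x,c^{(j)}_k)\wedge\bigwedge_{i<k}\psi_i(x_{\neq i},b,c^{(j)}_i)\vdash\varphi(x;b)\leftrightarrow\varphi(a;b)$. So the region around $a$ on which $\varphi(\cdot;b)$ is controlled moves with $b$. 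No fixed subset of $V^{k-1}$ need be $\varphi(\cdot;b)$-homogeneous for all $b\in C$, and your cover in Step~1 is not available.

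The correct objects are the cells $\gamma_c=\psi_k(x,c_k)\wedge\bigwedge_{i<k}\psi_i(x_{\neq i},y,c_i)$. These are already cylinder intersections in the $k$ variables $x_1,\dots,x_{k-1},y$. The relevant property is homogeneity of the fibre $\gamma_c|_b$. This, not $\psi$ acting jointly on the $(k-1)$-tuple, is why simplicial complexes replace boxes.

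\textbf{Gap 2: the counting in Step~2.} This gap is independent of the first, and you partly concede it. The principle that $v$ lies outside every homogeneous complex only if some $v_i$ crosses the part containing $v_{\neq i}$ is false, even for $k=2$. Take parts $A=\{a_1,a_2\}$ and $B=\{b_1,b_2\}$, with $\varphi$ false only at $(a_2,b_2)$. Then $(a_1,b_1)$ crosses nothing, yet $A\times B$ is not homogeneous. Your proposed repair, that the value depends only on $j_1,\dots,j_{k-1}$, is never argued.

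\textbf{The missing idea: build homogeneity into the partition.} Write $\varphi^1:=\varphi$ and $\varphi^0:=\neg\varphi$.
\begin{enumerate}
\item For each cell $\gamma$ with parameters in the sample $C$ and each $\sigma\in\{0,1\}$, let $D^\sigma_\gamma:=\{b:\gamma|_b\subseteq\varphi^\sigma(x;b)\}$.
\item Add these sets, as conditions on the $y$-coordinate only, to the sets $\psi_1(x_{\neq 1},y,c_1)$ with $c_1\in C$ that generate the partition of the block omitting $x_1$. Every other block uses just the atoms of its $\psi_i(\cdot,c_i)$ with $c_i\in C$.
\item Each $\gamma\wedge D^\sigma_\gamma$ then lies inside $\varphi^\sigma$ and is a union of complexes. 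So it suffices that $G:=\bigcup_{\gamma,\sigma}\gamma\wedge D^\sigma_\gamma$ covers a $(1-\delta)$-fraction of $V^k$.
\item This follows by Fubini over $a\in V^{k-1}$. The strong honest definition provides a $C$-complete tuple $(\gamma_{c^{(1)}},\dots,\gamma_{c^{(N)}})$ for $a$, and the fibre of $G$ above $a$ contains $F_{a,c^{(1)},\dots,c^{(N)}}$.
\item Choose $C$ to contain an $\varepsilon$-approximating sample for this uniformly definable family. Then that set has measure at least $1-\delta$ (Proposition~\ref{cuttinglemma}).
\item A common refinement merges the $k$ partitions into one partition of $V^{k-1}$, and NIP type-counting keeps all sizes polynomial in $\delta^{-1}$.
\end{enumerate}
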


The simplicial complexes here are known as \textit{cylinder (intersection) sets} in \cite{artemfrancis, chernikovtowsner, chernikovtowsnerstability} --- see the end of Section \ref{sechypreg} for an explanation of this name. Note that this notion of a cylinder is distinct from that in a \textit{cylinder regularity lemma}, such as in \cite{cylinderregularity}.
\begin{remark}
    The reader that is familiar with general hypergraph regularity may protest that, in order for Theorem \ref{veryabridgedmyhyperreg} to be an honest regularity lemma, the parts $V_j$ of the partition of $V^{k-1}$ themselves ought to be quasirandom as $(k-1)$-uniform hypergraphs. This is not difficult to achieve. Indeed, we will show in fact that the $V_j$ are uniformly definable by some formula in the NIP theory $T$ (see, for example, Theorem \ref{nonpartitemainresult}), so we may apply the NIP regularity lemma from \cite{alonfischernewman, lovaszszegedy, definablenipregularity} to the $V_j$ to obtain a partition of $V$ into polynomially many parts $W_1\sqcup \cdots \sqcup W_H$, such that for most $j\in [K]$ and $i_1, ..., i_k\in [H]$, $V_j$ has density close to 0 or 1 inside $W_{i_1}\timesdots W_{i_k}$. See Subsection \ref{subseccounting} for more details.
\end{remark}
Theorem \ref{veryabridgedmyhyperreg} says that NIP strong $(k-1)$-distality provides a general setting for $k$-uniform hypergraphs to have a homogeneous regularity lemma. In fact, we can generalise the setting even further. To describe this generalisation, we discuss our proof strategy for Theorem \ref{veryabridgedmyhyperreg}, which involves developing a key tool for NIP strongly $k$-distal theories: \textit{$k$-strong honest definitions}.

The distal regularity lemma, Theorem \ref{abridgeddistalgraphreg}, was proved in \cite{regularitylemma} using \textit{strong honest definitions}. In \cite{extdefinable}, Chernikov and Simon proved that an $L$-theory is (1-)distal if and only if every formula $\varphi(x;y)\in L$ has a \textit{strong honest definition} $\psi(x;z)\in L$: for all $B\subseteq M\models T$ with $2\leq |B|<\infty$ and $a\in M$, there is $c\in B$ such that for all $b\in B$,
    \[a\models \psi(x;c)\vdash \varphi(x;b)\leftrightarrow \varphi(a;b).\]
Here, we have employed the abuse of notation that if $y$ is an $n$-tuple with entries in a set $Y$ (that is, $y\in Y^n$), we sometimes simply write $y\in Y$, but $X\subseteq Y$ always means $X\subseteq Y^1$. 

In Definition \ref{defnkSHDlegit}, we extend strong honest definitions to the $k$-ary setting as follows. Let $k\in\N^+$. Given an $L$-formula $\varphi(x_1, ..., x_k;y)$, a \textit{$k$-strong honest definition} for $\varphi$ is a $(k+1)$-tuple of $L$-formulas $(\psi_i(x_{\neq i},y,z_i):i\in [k])^\frown (\psi_{k+1}(x,z_{k+1}))$ such that the following holds.

    There is $N\in\N$ such that, for all $B\subseteq M\models T$ with $2\leq |B|<\infty$ and $a=(a_1, ..., a_k)\in M$, there are $c^{(j)}_1, ..., c^{(j)}_{k+1}\in B$ for $j\in [N]$ such that for all $b\in B$, there is $j\in [N]$ such that
            \[a\models \psi_{k+1}(x,c^{(j)}_{k+1})\wedge\bigwedge_{i=1}^k \psi_i(x_{\neq i},b,c^{(j)}_i)\vdash \varphi(x;b)\leftrightarrow \varphi(a;b).\]

The following analogue to the result of Chernikov and Simon is Theorem \ref{uniformkSHD}.
\begin{theorem}\label{introequiv}
    Let $T$ be NIP and $k\in\N^+$. Then $T$ is strongly $k$-distal if and only if every $\varphi(x_1, ..., x_k; y)\in L$ has a $k$-strong honest definition.
\end{theorem}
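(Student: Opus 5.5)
The plan is to mirror Chernikov and Simon's proof that distality is equivalent to the existence of strong honest definitions. Two changes are needed. The single formula $\psi(x;c)$ is replaced by a conjunction of $k+1$ formulas: $k$ of them each omit one variable $x_i$ but may use $b$, and one uses only parameters from $B$. The indiscernible-sequence characterisation of distality is replaced by Walker's characterisation of strong $k$-distality via indiscernible sequences with $k$ filled cuts, as recalled in Section \ref{sechigherarity}. I treat the two directions separately. The converse direction is essentially a Ramsey/indiscernibility argument and should be routine. The forward direction is split into an existence step, obtained by compactness, and a uniformisation step, which uses NIP.

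\emph{Converse.} Assume every $\varphi(x_1,\dots,x_k;y)$ has a $k$-strong honest definition with bound $N$. Suppose strong $k$-distality fails for some $\varphi$. Then there is an indiscernible sequence $I$ with cuts filled by $a_1,\dots,a_k$ and a parameter $b$ witnessing the failure. By compactness and indiscernibility, I may stretch $I$ so that each cut has long stretches on either side.
\begin{enumerate}
\item Apply the definition with $B$ a large finite piece of the sequence $I$ (together with $b$ where the definition of strong $k$-distality requires it) and with the tuple $a=(a_1,\dots,a_k)$. This yields at most $N(k+1)$ parameters $c^{(j)}_i$ from $B$.
\item Since there are only boundedly many parameters, by pigeonhole I can find, near each cut, an element $b'\in B$ that is not separated from the cut by any $c^{(j)}_i$.
\item By indiscernibility over the parameters, $a$ satisfies the conjunction $\psi_{k+1}(x,c^{(j)}_{k+1})\wedge\bigwedge_i\psi_i(x_{\neq i},b',c^{(j)}_i)$. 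The formula $\psi_i$ does not mention $x_i$, so this is witnessed using the $k-1$ filled cuts other than the $i$-th; this is exactly the information that the hypothesis of strong $k$-distality supplies.
\item Hence $\varphi(a;b')$ agrees with $\varphi(a';b')$ for an $a'$ taken from the sequence, and this contradicts the failure witness.
\end{enumerate}

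\emph{Forward direction, existence step.} Assume $T$ is NIP and strongly $k$-distal. Fix $\varphi(x_1,\dots,x_k;y)$. The first goal is a non-uniform version: for each $a$ and each small set $B$ (working in a monster model, or with $B$ finite), the $\varphi$-type of $a$ over $B$ is isolated by formulas of the required shape with parameters from $B$. I would argue by contradiction and compactness. Suppose no finite collection of such conjunctions works. Then I can build an indiscernible sequence of realisations of the failure, in the usual way, and extract an indiscernible sequence with $k$ cuts filled by $a_1,\dots,a_k$ that violates the strong $k$-distality condition. Compactness then yields finitely many formulas $\psi_1,\dots,\psi_{k+1}$ and a bound $N$. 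One can take disjunctions and conjunctions to absorb the finitely many candidate formulas into single $\psi_i$; here I use the requirement $|B|\ge 2$ to code case distinctions, as Chernikov and Simon do.

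\emph{Forward direction, uniformisation (main obstacle).} The definition demands that the parameters lie in $B$ itself and that the number $N$ of parameter blocks is uniform over all finite $B$. In Chernikov and Simon this is where NIP enters, through honest definitions together with the $(p,q)$-theorem (Matoušek) applied to the family of sets $\{b : \psi\text{-conjunction with parameters }c\text{ decides }\varphi(a;b)\}$. I would try the same route. For fixed $a$, consider the set system on $B$ consisting of these sets as $c$ ranges over $B$. The existence step, applied to subsets $B_0\subseteq B$ of size $q$, gives the $(p,q)$-property. NIP bounds the VC-dimension of the dual system, since the sets are defined by $k$ formulas each involving $b$. The $(p,q)$-theorem then produces a uniformly bounded number $N$ of parameter choices covering all $b\in B$. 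The delicate point is that each $\psi_i$ also depends on $b$ itself, not only on $c$, so I must check that the dual family still has bounded VC-dimension. I must also check that the $(p,q)$ property can be obtained uniformly from the existence step applied to small subsets of $B$.
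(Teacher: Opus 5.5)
\emph{Verdict: there is a genuine gap.} Your uniformisation step is essentially the paper's, and the two worries you flag are settled there. The $(p,q)$-theorem is applied to the family $\{\theta_{\Psi,N}(B^{eN};a,b): b\in B^y\}$ of sets of parameter tuples. Its dual VC-dimension is finite simply because $\theta_{\Psi,N}(z^{(1)},\dots,z^{(N)};x,y)$ is a single $L$-formula in an NIP theory, so the $y$-dependence of the $\psi_i$ is harmless. Uniformity of the $(p,q)$-property comes from the compactness Lemma~\ref{uniformlemma}: it is applied with $m_{\Psi,N}:=d\cdot\VC^*(\theta_{\Psi,N})$ fixed in advance for every candidate $(\Psi,N)$, and yields finitely many candidates, which are then coded into one.

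The gap lies in the two places where strong $k$-distality is actually used, and it starts with the definition. In Definition~\ref{defnkstronglydistal} there is a \emph{single} cut, filled by $a$, and $k$ external parameters $b_1,\dots,b_k$. The hypotheses are that $I+a+J$ is indiscernible over each $b_{\neq m}$ and that $I+J$ is indiscernible over $b_1\cdots b_k$. (The picture with several filled cuts and no parameters is $k$-distality, Definition~\ref{defnkdistal}.) So the dictionary runs the other way from yours:
\begin{itemize}
\item the external tuple of the honest definition plays the role of $(b_1,\dots,b_k)$;
\item the elements of $B$ play the role of the sequence and the cut-filler;
\item $\psi_i(x_{\neq i},y,z_i)$ matches indiscernibility of $I+a+J$ over $b_{\neq i}$;
\item $\psi_{k+1}(x,z_{k+1})$ matches indiscernibility of $I+J$ over $b_1\cdots b_k$.
\end{itemize}
Your Steps 2--4 of the converse put $a_1,\dots,a_k$ in $k$ cuts and have $\psi_i$ ``witnessed using the $k-1$ filled cuts other than the $i$-th''. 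That configuration does not occur in the definition, and your existence step tries to contradict the wrong property.

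Even with the right dictionary, the existence step is only an assertion. Its whole content is \emph{why} the $\varphi$-type of $a$ over $B$ is controlled by formulas that mention $y$ but only $k-1$ of the $x_i$, together with one formula not mentioning $y$. ``Build an indiscernible sequence of realisations of the failure in the usual way'' does not supply this. The paper obtains it from Walker's Lemma~\ref{walkerSHDlemma}. Take $(M',B')\succcurlyeq(M,B)$ $|M|^+$-saturated, $p=\tp(a/\monster)$ and $p_{\neq i}=\tp(a_{\neq i}/\monster)$. Then for every global $q(y)$ finitely satisfiable over $B$,
\[p|_{B'}\cup\bigcup_{i}(p_{\neq i}\otimes q)|_{B'}\vdash(p\otimes q)|_{B'},\]
and the formulas of $(p_{\neq i}\otimes q)|_{B'}$ are exactly of the shape $\psi_i(x_{\neq i},y,c_i)$ with $c_i\in B'$. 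The set $S^{\fs}_{a,y}(B';B)$ is closed (Lemma~\ref{fslemma}), so compactness gives a finite subcover. This subcover is coded into one tuple of formulas, which is where $N$ and $|B|\geq 2$ enter. Finally, the parameters are pulled back from $B'$ to $B$ for each finite $\bar B\subseteq B$ by elementarity of the pair.

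The converse is not proved directly in the paper either. A $k$-strong honest definition yields condition (ii) of Walker's Theorem~\ref{walkerSHDnew} with $\psi:=\bigwedge_{j=1}^N\psi_{k+1}(x;z^{(j)}_{k+1})$. This works because each $\psi_i(x_{\neq i},b,c^{(j)}_i)$ satisfied by $a_{\neq i}$ belongs to $\tp(a_{\neq i}/B)$. A direct indiscernibility argument in the corrected setting is plausible, but it must move the parameters as well as the cut-filler. The truth of $\psi_{k+1}(b_1,\dots,b_k,c^{(j)}_{k+1})$ is only known to be preserved when $c^{(j)}_{k+1}$ is moved within $I+J$.

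As written, neither direction is established unless you either cite these results of Walker or reprove them from Definition~\ref{defnkstronglydistal}.
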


The reader may be concerned that the formulas $\psi_1, ..., \psi_k$ in a $k$-strong honest definition for $\varphi(x_1, ..., x_k; y)$ involve the $y$-variable as well as the $x$-variables. Crucially, however, each of $\psi_1, ..., \psi_k$ involves exactly $k-1$ of the $x$-variables (and $y$). The intuition is that, for $b\in M$, in order to understand how $x_1, ..., x_k$ interact with $b$ (with respect to $\varphi$), it is enough to understand how any $k-1$ of the $x_i$'s interacts with $b$ (with respect to $\psi_1, ..., \psi_k$) and how $x_1, ..., x_k$ interact (with respect to $\psi_{k+1}$). In other words, the interaction of the $k+1$ variables $x_1, ..., x_k, y$ is locally controlled by the interactions of $k$ of those variables.

We now come to the main result of the paper. The reader can find it in full generality and strength as Theorem \ref{kSHDreglemmauniform}, but here we first state an abridged form that fits more directly with the narrative so far. This is a special case of Corollary \ref{kSHDregcorfinite}.

\begin{theorem}\label{abridgedmyhyperreg}
    Let $k\geq 2$. Let $M$ be an $L$-structure that is NIP, and let $\varphi(x_1, ..., x_{k-1}; x_k)\in L(M)$ have a $(k-1)$-strong honest definition, with $|x_1|=\cdots =|x_k|=:d$. Then, for all $\delta>0$, there is a natural number $K\leq \poly_\varphi(\delta^{-1})$ such that the following holds.

    For all finite $V \subseteq M^d$, there is a partition $V^{k-1}=V_1 \sqcup \cdots \sqcup V_K$ inducing the partition $\mathcal{Q}$ of $V^k$ given by
    \[\left\lbrace\left\lbrace (v_1, ..., v_k)\in V^k: v_{\neq i}\in V_{j_i}\text{ for all } i\in [k]\right\rbrace: j_1, ..., j_k\in [K]\right\rbrace,\]
    such that $\sum_{Q\in\mathcal{Q}\text{ }\varphi\text{-homogeneous}} |Q|\geq (1-\delta) |V|^k$.
\end{theorem}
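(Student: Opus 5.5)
The plan is to adapt the Chernikov--Starchenko argument, which derives the distal regularity lemma from strong honest definitions via a cutting and $\varepsilon$-net argument. The difference here is that the cells are cut out on each $(k-1)$-face of a $k$-simplex rather than on the individual coordinates.

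Write $x=(x_1,\dots,x_{k-1})$, $y=x_k$, and let $(\psi_i(x_{\neq i},y,z_i):i\in[k-1])^\frown(\psi_k(x,z_k))$ be the $(k-1)$-strong honest definition of $\varphi$, with constant $N$. The basic observation concerns a point $(a,b)=(v_1,\dots,v_k)\in V^k$. Each formula $\psi_i(a_{\neq i},b,c_i)$ with $i\le k-1$ is a statement about the face $v_{\neq i}$, and $\psi_k(a,c_k)$ is a statement about the face $v_{\neq k}$. Consequently, if all $k$ faces of $(v_1,\dots,v_k)$ lie in prescribed $\psi$-type classes over a parameter set, the honest definition pins down the truth value of $\varphi(v_1,\dots,v_k)$.

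First I would build a single partition of $V^{k-1}$ that serves every face simultaneously. For a finite $S\subseteq V$, partition $V^{k-1}$ according to the complete types, over $S$, in the following family: all formulas $\psi_i(u_{\sigma(1)},\dots,u_{\sigma(k-1)},c)$ for $i\in[k]$, every permutation $\sigma$ identifying $u$ with a face, and every choice of $c\in S$; add $\varphi$ with parameters from $S$ as well. Since $M$ is NIP, this finite family of formulas has bounded VC-dimension. By Sauer--Shelah, the number of classes is therefore at most $\poly(|S|)$, and each class is uniformly definable with parameters from $S$.

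Second, I would prove a cutting statement. Call a cell $Q$ of the induced partition $\mathcal Q$ \emph{crossed} by $b\in V$ if $\varphi$ is not constant on the points of $Q$ whose last coordinate is $b$ (with the symmetric notion in the other coordinates).
\begin{itemize}
\item Apply the honest definition with $B=S\cup\{b\}$ for each $b\in S$.
\item The parameters $c^{(j)}$ then lie in $B$, and the relevant $\psi$-types of the faces are constant on $Q$.
\item Hence no $b\in S$ crosses any cell. This needs care in the case where some $c^{(j)}$ equals $b$ itself. I would handle this by also including in the partition the types of faces with $b$ substituted, which is still a uniformly definable family of parameters from $S$.
\end{itemize}

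Third, I would use an $\varepsilon$-net argument. The relation ``$b$ crosses the cell defined by parameters $\bar s$'' is uniformly definable, so it has bounded VC-dimension by NIP. Take $S$ to be a random sample, or an $\varepsilon$-net, of size $O(\varepsilon^{-1}\log\varepsilon^{-1})$. Then every cell not crossed by any element of $S$ is crossed by at most $\varepsilon|V|$ elements of $V$. Summing over cells and over the $k$ coordinate directions, the total measure of non-homogeneous cells is at most $k\varepsilon|V|^k$, up to counting over the polynomially many cells. Setting $\varepsilon=\delta/\poly$ gives $K\le\poly_\varphi(\delta^{-1})$.

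I expect the main obstacle to be the second step. The formulas $\psi_i$ depend on $b$, so the cutting is not a fixed partition of $M^{d(k-1)}$ as in the distal case. The key point is to ensure that a cell that is uncrossed relative to $S$ is genuinely homogeneous once the $b$-dependence is absorbed into the face types. My first attempt would be to define the cells by the $\psi_i$-types of all $(k-1)$-subtuples, under every reordering, so that the $b$-dependent conditions become conditions on faces of the simplex.
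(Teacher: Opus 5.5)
\textbf{There is a genuine gap, and it lies in your third step, not the second.} Step 2 is in fact immediate once the formulas $\varphi(u;s)$, $s\in S$, are in the partition. It also only concerns $b\in S$, which is not where the difficulty is. Three things go wrong in Step 3.

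\emph{The VC bound is missing.} Your cells are complete $\psi$-types over $S$, i.e.\ Boolean atoms that use all of $S$ as parameters. So the crossing sets $X_Q=\{b\in V: b\text{ crosses }Q\}$ do not range over a family uniformly definable with boundedly many parameters. You therefore have no VC bound independent of $|S|$, and the $\varepsilon$-net theorem cannot be run with $|S|=O(\varepsilon^{-1}\log\varepsilon^{-1})$.

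\emph{The counting is circular.} Grant $|X_Q|\le\varepsilon|V|$ for every cell. The number of bad points is still $\sum_Q\sum_{b\in X_Q}\lvert Q|_b\rvert$. The fibres $Q|_b$ vary with $b$, because every face of $Q$ other than $x_{\neq k}$ contains $x_k$. So you cannot telescope as in the binary cutting argument, where a cell is a fixed region of $x$-space. You lose a factor $|\mathcal{Q}|=\poly(|S|)=\poly(\varepsilon^{-1})$, and choosing $\varepsilon=\delta/\poly$ is circular.

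\emph{Non-crossing does not give homogeneity.} ``Crossed by no $b$'' only says that $\varphi(\cdot;b)$ is constant on each fibre $Q|_b$. That constant may depend on $b$, so such a $Q$ need not be $\varphi$-homogeneous. Adding $\varphi(u;s)$ for $s\in S$ does not help. Moreover, the honest definition gives no control over fibres in the other coordinates, so your ``symmetric notion'' is not available.

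\textbf{How the paper avoids this.} The paper never samples over atom-cells. It uses the honest cells $\gamma_c=\psi_1(x_{\neq1},c_1)\wedge\cdots\wedge\psi_k(x_{\neq k},c_k)$, viewing all the $\psi_i$ as formulas in $x_1,\dots,x_k$. For $a\in V^{k-1}$ and $\bar c=(c^{(1)},\dots,c^{(N)})$, let $F_{a,\bar c}$ be the set of $b$ for which some $\gamma_{c^{(j)}}$ contains $(a,b)$ and forces $\varphi(\cdot;b)\leftrightarrow\varphi(a;b)$.

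This family is uniformly definable in $(a,\bar c)$ with boundedly many parameters. So a single sample $S\subseteq V$ of size $O(\delta^{-2}\log\delta^{-1})$ is a $\delta$-approximation for all of it. Let $B$ contain the coordinates of $S$. For each $a$ separately, the honest definition gives $\bar c$ from $B$ with $S\subseteq F_{a,\bar c}$, hence $|F_{a,\bar c}\cap V|\ge(1-\delta)|V|$. Summing over $a$ gives at least $(1-\delta)|V|^k$ good points, with no loss from the number of cells.

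The missing ingredient for homogeneity is the family of sets $D^\sigma_\gamma=\{b:\gamma|_b\subseteq\varphi^\sigma(\cdot;b)\}$, where $\varphi^1=\varphi$ and $\varphi^0=\neg\varphi$, taken over the $\poly(|B|)$ many cells $\gamma$ with parameters in $B$. These sets depend only on the $x_k$-coordinate, so they can be added to the partition of the face $x_{\neq1}$. With this refinement:
\begin{itemize}
\item each $\gamma\wedge D^\sigma_\gamma$ is $\varphi$-homogeneous and a union of cylinder cells;
\item every good point $(a,b)$ lies in one of them;
\item Sauer--Shelah keeps the number of atoms polynomial in $|B|$, hence in $\delta^{-1}$.
\end{itemize}
A common refinement of the per-face partitions then gives the single partition of $V^{k-1}$.
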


The assumption is weaker here than in Theorem \ref{veryabridgedmyhyperreg}. Indeed, here we only require that $\varphi$ has a $(k-1)$-strong honest definition, whereas in Theorem \ref{veryabridgedmyhyperreg} we require all formulas to have a $(k-1)$-strong honest definition (by Theorem \ref{introequiv}).

We now state the main result in greater generality and strength, in the language of Keisler measures (note that Theorems \ref{abridgeddistalgraphreg} and \ref{abridgeddistalhypergraphreg} can also be formulated in terms of Keisler measures). This is a special case of Corollary \ref{kSHDregcor}.
\begin{theorem}\label{nonpartitemainresult}
    Let $k\geq 2$. Let $M$ be an $L$-structure that is NIP, and let $\varphi(x_1, ..., x_{k-1}; x_k)\in L(M)$ have a $(k-1)$-strong honest definition, with $|x_1|=\cdots =|x_k|=:d$. Then, for all $\delta>0$, there is a natural number $K\leq \poly_\varphi(\delta^{-1})$ and a formula $\theta(x_1, ..., x_{k-1},z)\in L$ such that the following holds.
    
    Let $V\subseteq M^d$ be $M$-definable, and let $\mu(x_1)$ be a global measure, generically stable over $M$. Then there is a partition $V^{k-1}=V_1\sqcup\cdots \sqcup V_K$, where each $V_i=\theta(x_1, ..., x_{k-1}, c)$ for some $c\in M^z$, inducing the partition
    \[\mathcal{Q}:=\left\lbrace\left\lbrace v=(v_1, ..., v_k)\in V^k: v_{\neq i}\in V_{j_i}\text{ for all } i\in [k]\right\rbrace: j_1, ..., j_k\in [K]\right\rbrace\]
    of $V^k$, such that $\sum_{Q\in\mathcal{Q}\text{ }\varphi\text{-homogeneous}} \mu^{(k)}(Q)\geq (1-\delta) \mu(V)^k$.
\end{theorem}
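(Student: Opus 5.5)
We follow the strategy of the Chernikov--Starchenko distal regularity lemma, with the $(k-1)$-strong honest definition playing the role of the strong honest definition. Fix the $(k-1)$-strong honest definition $(\psi_i(x_{\neq i},x_k,z_i):i\in[k-1])^\frown(\psi_k(x_1,\dots,x_{k-1},z_k))$ for $\varphi(x_1,\dots,x_{k-1};x_k)$, with its constant $N$. For a tuple $a=(a_1,\dots,a_{k-1})$ and a finite set $B$ of parameters, the definition provides $N$ cells
\[\Delta_j(x):=\psi_k(x,c^{(j)}_k)\wedge\bigwedge_{i<k}\psi_i(x_{\neq i},b,c^{(j)}_i).\]
Each cell contains $a$, and on each cell $\varphi(x;b)$ is constant, for every $b\in B$. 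Apart from $\psi_k$, each of these conditions involves only $k-1$ of the $k$ coordinates of $(x_1,\dots,x_{k-1},b)$. This is exactly what a cylinder intersection set sees. So the plan is to build the partition of $V^{k-1}$ from the $\psi$-formulas evaluated on a small random sample.

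\textbf{Step 1: sampling.} Since $M$ is NIP and $\mu$ is generically stable, $\mu^{(k)}$ is well defined and symmetric. The families defined by $\varphi$ and by the $\psi_i$ (with the $c$'s varying) have bounded VC-dimension. By the VC/$\varepsilon$-approximation theorem for generically stable measures, we choose a set $S\subseteq V$ of size $\poly(\delta^{-1})$ that is an $\varepsilon$-net/approximation for all relevant definable families, with $\varepsilon=\delta/\poly$.

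\textbf{Step 2: the cylinder partition.} Let $\theta(x_1,\dots,x_{k-1},z)$ code the Boolean atoms generated, with parameters from $S$, by the formulas $\psi_k(x_1,\dots,x_{k-1},c)$ and $\psi_i(x_{\neq i},x_k{=}s,c)$ for $s,c\in S$. Applied to an arbitrary $(k-1)$-subtuple of $(v_1,\dots,v_k)$, these have to be symmetrised over coordinate positions; this is routine. By NIP (Sauer--Shelah), the number of such atoms on $V^{k-1}$ is polynomial in $|S|$, hence $K\le\poly_\varphi(\delta^{-1})$. Uniform definability of the pieces by a single $\theta$ is by coding each atom by its parameters.

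\textbf{Step 3: homogeneity.} This is the main obstacle. For a point $v=(v_1,\dots,v_k)$, its cylinder $Q$ records the atoms of $v_{\neq i}$ for all $i$. The honest definition applied with $a=v_{<k}$ gives cells, and on each cell $\varphi$ is determined on the sample. However, the parameter $b=v_k$ is generally not in $S$. We need to show that $Q$ is non-homogeneous only if $v$ lies in a "bad" region of small $\mu^{(k)}$-measure. I would argue as in the distal case via a cutting-style counting. Suppose a cylinder $Q$ contains $v,v'$ with $\varphi(v)\neq\varphi(v')$. Using $B=S\cup\{v_k,v_k'\}$ and the cells, we derive that some $\psi$-instance with parameters in $S\cup\{v_k\}$ separates points in the same atom. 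That forces the instance to "cross" the atom. By the net property, such crossing configurations have measure at most $\varepsilon$ per instance. A union bound over the $\poly(|S|)$ instances then gives total bad measure at most $\delta\,\mu(V)^k$ once $\varepsilon$ is chosen small enough polynomially.

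Making Step 3 precise requires organising the argument inductively over coordinates. At each step one replaces an outside parameter by a sample point with the same $\psi$-type. The obstacle is controlling the $y$-dependence of $\psi_1,\dots,\psi_{k-1}$, and this is why the cells include $b$ as a cylinder coordinate. Finally, the finite version (Theorem \ref{abridgedmyhyperreg}) follows by taking $\mu$ to be the counting measure on $V$.
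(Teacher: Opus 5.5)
There is a genuine gap. Your partition in Step 2 is generated only by instances of the $\psi$'s, and no such partition can yield homogeneity in general. The cells of the honest definition only localise $a$. Which value $\varphi(\cdot\,;b)$ takes on a cell over the fibre $x_k=b$ is further information depending on $b$, and your partition never records it.

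For instance, $\varphi(x_1,\ldots,x_{k-1};x_k):=(x_k<0)$ in $(\mathbb{Q},<)$ has the trivial $(k-1)$-strong honest definition: all $\psi_i$ tautologies, degree $1$. Your partition of $V^{k-1}$ is then trivial, and the single cylinder $V^k$ is not $\varphi$-homogeneous as soon as $0<\mu(V\cap\{x<0\})<\mu(V)$. So the claim in Step 3, that a non-homogeneous cylinder forces some $\psi$-instance to separate points of an atom, is false: here nothing crosses anything.

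Step 3 is also unsound on its own terms. Applying the honest definition to $S\cup\{v_k,v_k'\}$ produces cells with parameters outside the sample, which the partition does not see. No net property bounds the measure of configurations crossed by such instances. Separately, substituting $x_k=s$ into $\psi_i$ throws away exactly the $x_k$-dependence that makes $\psi_i(x_{\neq i},x_k,c)$ a relation on a face of the cylinder; only $c$ should come from the sample. Even with that corrected, the example above defeats any partition built from $\psi$-instances alone.

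What is missing is a cutting lemma combined with sets recording the sign of $\varphi$ on each cell.

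\emph{Cutting lemma.} Let $\nu:=\mu|_V$, which is generically stable by Proposition \ref{relativisation}. For a tuple $\bar c=(c^{(1)},\ldots,c^{(N)})$ of cell parameters, let $F_{a,\bar c}$ be the set of $b$ such that, for some $j$, $(a,b)\in\gamma_{c^{(j)}}$ and $\{a':(a',b)\in\gamma_{c^{(j)}}\}$ is $\varphi(\cdot\,;b)$-homogeneous. This is a single definable family. One application of $\varepsilon$-approximation with $\varepsilon=\delta$ gives a sample of size $O(\delta^{-2}\log(2\delta^{-1}))$; let $B\subseteq M$ consist of its coordinates. For every $a$, the honest definition applied to $a$ and $B$ gives $\bar c$ from $B$ with $B^d\subseteq F_{a,\bar c}$. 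So the whole sample lies in $F_{a,\bar c}$, and hence $\nu(F_{a,\bar c})\geq 1-\delta$. This holds uniformly in $a$, with no union bound, parameter replacement or induction.

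\emph{Sign sets.} For each cell $\gamma$ with parameters in $B$ and each $\sigma\in\{0,1\}$, put $D^\sigma_\gamma:=\{b:\gamma|_b\subseteq\varphi^\sigma(\cdot\,;b)\}$. Add these, as subsets of a face containing $x_k$, to the atoms of $\psi_k(x_{<k},c)$ and $\psi_i(x_{\neq i},x_k,c)$ for $c\in B$. The $D^\sigma_\gamma$ are instances of one formula with parameters from $B$, so NIP still gives polynomially many atoms.

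\emph{Conclusion.} Each $\gamma\wedge D^\sigma_\gamma$ is $\varphi$-homogeneous and a union of cylinder sets. The fibre over $a$ of $G:=\bigcup_{\gamma,\sigma}\gamma\wedge D^\sigma_\gamma$ contains $F_{a,\bar c}$, so integrating over $a$ in $\nu\otimes\nu^{(k-1)}$ gives $G$ measure at least $1-\delta$. Finally, merge the $k$ face partitions by common refinement to obtain a single uniformly definable partition of $V^{k-1}$.
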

Note that although we develop $k$-strong honest definitions as a tool to find our hypergraph homogeneous regularity lemma, we believe their potential applicability far exceeds the context of this paper. Just as strong honest definitions proved to be instrumental in the development of distality (see, for instance, \cite{distalityvaluedfields, boxallkestner, cuttinglemma, regularitylemma}), we hope that our $k$-strong honest definitions will similarly lead to further developments in higher-arity distality.
\subsection{Similar results in the literature}
Most of the results in this paper, including the regularity lemma and the development of $k$-strong honest definitions, are contained in the author's PhD thesis \cite{thesis}.

In \cite[Corollary 3.33]{artemfrancis}, Chernikov and Westhead also obtain a homogeneous regularity lemma for formulas in NIP strongly $k$-distal structures, using completely different methods: they extend the notion of smooth measures to that of \textit{$k$-smooth} measures, and prove that $k$-smooth measures have good properties in NIP strongly $k$-distal theories (analogous to those of smooth measures in distal theories).

Unlike \cite{artemfrancis}, we obtain a bound on the size of the partition, namely, that it is polynomial in the reciprocal of the error parameter. Furthermore, we prove that the parts of the partition are uniformly definable; in \cite[Corollary 3.33]{artemfrancis}, this is proven under an additional assumption, such as the existence of definable Skolem functions. This assumption is also required in their regularity lemma for arbitrary generically stable measures; without this assumption, their result is stated for finitely supported measures. Here, we prove the regularity lemma for arbitrary generically stable measures without additional assumptions.
\subsection{Structure of the paper}
We begin with expositions of hypergraph regularity lemmas and higher-arity distality, constituting Sections \ref{sechypreg} and \ref{sechigherarity} respectively. In Section \ref{secSHD}, we define $k$-strong honest definitions, and show that an NIP theory is strongly $k$-distal if and only if every formula $\varphi(x_1, ..., x_k; y)$ has a $k$-strong honest definition. In Section \ref{seckdistalreglemma}, we state and prove our regularity lemma for formulas $\varphi(x_1, ..., x_k; y)$ with a $k$-strong honest definition in an NIP theory.
\subsection{Basic definitions}\label{subsecnotation}
We lay out some basic notation used in the rest of this paper.

Let $k,l\in\N^+$. A \textit{$k$-uniform hypergraph}, or a \textit{$k$-graph}, is a pair $H=(V,E)$ where $V$ is the set of \textit{vertices} and $E\subseteq \binom{V}{k}$ is the set of \textit{hyperedges}, that is, $E$ consists of subsets of $V$ of size $k$. We sometimes consider hyperedges as tuples rather than sets. If the hyperedge relation $E$ is not specified, we sometimes denote it by $H$.

A $k$-graph $H=(V,E)$ is \textit{$l$-partite} if there is a partition $V_1\sqcup \cdots \sqcup V_l$ of $V$ such that, for all $e\in E$ and $i\in [l]$, $|e\cap V_i|\leq 1$; in this case, we write $H=(V_1\sqcup \cdots \sqcup V_l, E)$, and if $k=l$, we write $H=E(V_1, ..., V_k)$ and view it as a subset of $V_1\timesdots V_k$. We sometimes define a $k$-partite $k$-graph $E(V_1, ..., V_k)$ where $V_1, ..., V_k$ are not necessarily disjoint; in that case, the vertex sets are taken to be disjoint copies of $V_1, ..., V_k$.

For $q,r\in\R$ and $\delta\geq 0$, write $q\approx_\delta r$ to mean $|q-r|\leq \delta$.

We use the following asymptotic notation. Let $f(x,y),g(x,y): D\times E\to\R_{\geq 0}$, where $D,E$ are sets and $x,y$ are tuples of variables.
\begin{enumerate}[(i)]
    \item Write $f(x,y)=O_x(g(x,y))$ if there is $C=C(x): D\to \R_{\geq 0}$ such that $f(x,y)\leq C g(x,y)$ for all $x\in D$ and $y\in E$.
    \item Let $h(y): E\to\R_{\geq 0}$. Write $f(x,y)\leq \text{poly}_x(h(y))$ if there is $C=C(x): D\to\R_{\geq 0}$ such that $f(x,y)\leq Ch(y)^C$ for all $x\in D$ and $y\in E$.
\end{enumerate}

We often work in a `sufficiently saturated' model $\monster$ of a given first-order theory, in which case a subset $B\subseteq \monster$ is \textit{small} if $|B|$ is strictly less than the saturation cardinality of $\monster$.
\subsection{Acknowledgements}
I thank Artem Chernikov, Aris Papadopoulos, and Francis Westhead for fruitful conversations on $k$-distality at the University of Maryland, with special thanks to Artem for his invitation. I also thank Julia Wolf for her patient explanations of hypergraph regularity and for her feedback on this paper. Finally, I thank Pantelis Eleftheriou for his support and mentorship throughout my PhD.

\textit{Soli Deo gloria.}
\subsection{Funding statement}
The majority of the research in this paper was conducted as part of the author’s PhD, supported by a scholarship funded by the School of Mathematics at the University of Leeds. This manuscript was prepared while the author was employed at the University of Cambridge and supported by Julia Wolf's Open Fellowship from the UK Engineering and Physical Sciences Research Council (EP/Z53352X/1).
\section{Hypergraph regularity}\label{sechypreg}
This section is an exposition of hypergraph regularity lemmas. There are many excellent such expositions in the literature (the reader is recommended to peruse \cite{gowershypergraphs}, for instance), but our aim is to give an abridged account of the subject matter, highlighting the features required to motivate the main result that is our own hypergraph regularity lemma (Theorem \ref{kSHDreglemmauniform}).

In this section, all (hyper)graphs are finite. We begin with a definition used to state Szemer\'edi's regularity lemma.
\begin{defn}\label{defndeltaregular}
    Let $E(V_1, V_2)$ be a bipartite graph. For $W\subseteq V_1\times V_2$, the \textit{relative density} of $E(V_1, V_2)$ in $W$ is
    \[d_W(V_1, V_2):=\frac{|E(V_1, V_2)\cap W|}{|W|}.\]
    The \textit{density} of $E(V_1, V_2)$ is $d(V_1, V_2):=d_{V_1\times V_2}(V_1, V_2)$.
    
    For $\delta>0$, say that $V_1\times V_2$ (or $(V_1, V_2)$) is \textit{$\delta$-regular} if, for all $A_i\subseteq V_i$ with $|A_i|\geq \delta |V_i|$, $d_{A_1\times A_2}(V_1, V_2)\approx_\delta d(V_1, V_2)$.
\end{defn}
This is a notion of \textit{quasirandomness}: if $E(V_1,V_2)$ is $\delta$-regular, then any pair of vertex subsets induces (approximately) the expected number of edges, which is a property of a random graph.

Szemer\'edi's regularity lemma reads as follows.
\patchcmd{\thmhead}{(#3)}{#3}{}{}
\begin{theorem}[\cite{szemerediregularitylemma}]\label{szemeredigraph}
    For all $\delta>0$, there is $K\in\N$ such that the following holds.

    Let $G=(V\sqcup V,E)$ be a finite bipartite graph. Then there is a partition $V=V_1\sqcup \cdots \sqcup V_K$ such that
    \[\sum_{V_i\times V_j\text{ }\delta\text{-regular}}|V_i\times V_j|\geq (1-\delta) |V|^2.\]
\end{theorem}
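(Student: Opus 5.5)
We prove Theorem \ref{szemeredigraph} by the standard energy-increment argument. For a partition $\mathcal{P}$ of $V$, define its \emph{index} (mean-square density)
\[q(\mathcal{P}):=\sum_{A,B\in\mathcal{P}}\frac{|A||B|}{|V|^2}\,d(A,B)^2\in[0,1].\]
We start from the trivial partition $\{V\}$ and refine repeatedly. The key facts are the following two.

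\textbf{Monotonicity.} Refining never decreases $q$. This follows from Cauchy--Schwarz, or equivalently from convexity of $t\mapsto t^2$: the density on $A\times B$ is the weighted average of the densities on the sub-boxes.

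\textbf{Increment.} If $\mathcal{P}$ fails the conclusion, refine it as follows. The irregular pairs $(V_i,V_j)$ carry total weight more than $\delta|V|^2$. For each irregular pair pick witnesses $A_{ij}\subseteq V_i$ and $B_{ij}\subseteq V_j$ with $|A_{ij}|\ge\delta|V_i|$, $|B_{ij}|\ge \delta|V_j|$ and $|d(A_{ij},B_{ij})-d(V_i,V_j)|>\delta$. Let $\mathcal{P}'$ be the common refinement of $\mathcal{P}$ with all the sets $A_{ij}$, $B_{ij}$, viewing the two sides of the bipartite graph as partitioned by the same partition of $V$.

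We then estimate $q(\mathcal{P}')$ pair by pair. For one irregular pair $(V_i,V_j)$, consider the four-cell partition $\{A_{ij},V_i\setminus A_{ij}\}\times\{B_{ij},V_j\setminus B_{ij}\}$. Write the index as a variance: the index contribution of a refinement of the box equals $\frac{|V_i||V_j|}{|V|^2}$ times the sum of $d(V_i,V_j)^2$ and the variance of the density function over the cells. The cell $A_{ij}\times B_{ij}$ has relative weight at least $\delta^2$ and deviation greater than $\delta$, so this gain is at least
\[\delta^2\cdot\delta^2\cdot\frac{|V_i||V_j|}{|V|^2}.\]
Since $\mathcal{P}'$ refines every such four-cell partition, monotonicity (applied within each box) lets us sum these gains. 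This gives
\[q(\mathcal{P}')\ge q(\mathcal{P})+\delta^4\cdot\delta=q(\mathcal{P})+\delta^5.\]
Carrying out this variance computation, and the bookkeeping that shows refinement within each box only increases the index, is the main technical step. Everything else is formal.

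\textbf{Size bound and termination.} Each part of $\mathcal{P}$ is cut by at most $2|\mathcal{P}|$ sets, so $|\mathcal{P}'|\le |\mathcal{P}|\,2^{2|\mathcal{P}|}$. Because $q$ is at most $1$ and increases by at least $\delta^5$ at every step, the process stops after at most $\delta^{-5}$ iterations. The final partition satisfies the conclusion, and its size is bounded by a tower-type function $K(\delta)$ independent of $V$ and $E$.

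One technicality remains: the conclusion asks for exactly $K$ parts. If the process stops with fewer, we pad with empty parts, or use a uniform bound $\le K$. Empty sets do not affect the sum, and we may treat pairs involving empty parts as vacuously regular, or simply allow the partition size to be at most $K$. This completes the plan.
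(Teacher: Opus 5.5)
Your energy-increment argument is correct. The variance identity gives a gain of more than $\delta^4|V_i||V_j|/|V|^2$ on each irregular box, and convexity within each box lets you sum these gains to an increment exceeding $\delta^5$. Cutting every part by the witnesses from both sides correctly handles the shared partition of the two copies of $V$, including the diagonal pairs $i=j$.

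The paper gives no proof of this statement; it simply cites Szemer\'edi. Your argument is the standard index-increment proof and yields the expected tower-type bound on $K$.
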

\patchcmd{\thmhead}{#3}{(#3)}{}{}
\begin{remark}\label{bipartiteornot}
    The reader may be used to a statement of Szemer\'edi's regularity lemma that is for a general graph rather than a bipartite graph, but these statements are essentially equivalent. Indeed, the bipartite version follows immediately from the general version; conversely, given a graph $G=(V,E)$, apply the bipartite version to the auxiliary bipartite graph $(V\sqcup V, E')$, where $(x,y)\in E'$ if and only if $(x,y)\in E$.

    In this paper, we will often focus on the partite setting, as it makes some statements cleaner and connects better to model-theoretic relations. In view of the above, the reader is urged not to worry about this.
\end{remark}
A notable application of Szemer\'edi's regularity lemmas is \textit{(graph) removal lemmas}, of which the following \textit{triangle removal lemma} is an important example. The \textit{triangle} is the complete graph on 3 vertices, and a graph is \textit{triangle-free} if it has no triangles.
\patchcmd{\thmhead}{(#3)}{#3}{}{}
\begin{theorem}[\cite{triangleremoval}]\label{removallemma}
    For all $\gamma>0$, there is $\alpha>0$ such that the following holds. If $G$ is a graph on $n$ vertices with fewer than $\alpha n^3$ triangles, then it can be made triangle-free by removing at most $\gamma n^2$ edges.
\end{theorem}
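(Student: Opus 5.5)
The triangle removal lemma (Theorem \ref{removallemma}) follows from Szemer\'edi's regularity lemma (Theorem \ref{szemeredigraph}) via the standard route. First, clean up the regular partition so that the surviving graph has no triangles. Then use a counting lemma to show that if a triangle survives, there are many triangles in total. Fix $\gamma>0$ and let $G=(V,E)$ have $n$ vertices. By Remark \ref{bipartiteornot}, I would use the non-partite form of the regularity lemma: a partition $V=V_1\sqcup\cdots\sqcup V_K$, with $K$ depending only on a parameter $\delta$ to be chosen in terms of $\gamma$, such that all but a $\delta$-fraction of the pairs $(V_i,V_j)$, weighted by $|V_i||V_j|$, are $\delta$-regular. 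I may also assume the parts have nearly equal size, which is standard. The equipartition refinement can alternatively be sidestepped by discarding the parts of size less than $\delta n/K$; together these cover at most $\delta n$ vertices.

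The cleaning step removes three kinds of edges, each costing a small multiple of $n^2$:
\begin{enumerate}[(a)]
\item edges inside a single part $V_i$, or incident to a small part;
\item edges between pairs $(V_i,V_j)$ that are not $\delta$-regular;
\item edges between pairs of density $d(V_i,V_j)<\gamma/4$.
\end{enumerate}
Call the resulting graph $G'$. Each category contributes at most roughly $\delta n^2$, $\delta n^2/K$ or $\gamma n^2/4$ edges. Choosing $\delta\ll\gamma$, and making $K$ large enough that $n^2/K$ is small (by refining an initial partition), the total number of removed edges is at most $\gamma n^2$.

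Suppose now that $G'$ still contains a triangle. Its vertices lie in three distinct parts $V_i,V_j,V_l$, and each of the three pairs among them is $\delta$-regular with density at least $\gamma/4$. The triangle counting lemma then yields at least
\[\left((\gamma/4)^3/2\right)|V_i||V_j||V_l|\]
triangles. The proof of the counting lemma goes as follows. Regularity implies that all but a small fraction of $x\in V_i$ have at least $(\gamma/4-\delta)|V_j|$ neighbours in $V_j$ and at least $(\gamma/4-\delta)|V_l|$ neighbours in $V_l$. These two neighbourhoods have size at least $\delta|V_j|$ and $\delta|V_l|$ provided $\delta<\gamma/8$. Applying regularity of $(V_j,V_l)$ to them then gives at least $(\gamma/4-\delta)^3|V_j||V_l|$ edges between them. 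Since each part has size at least $\delta n/K$, the total is at least $\alpha n^3$ triangles with $\alpha=c(\gamma)\,(\delta/K)^3$. This quantity depends only on $\gamma$, so the contrapositive gives the lemma.

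The main obstacle is bookkeeping rather than ideas. The counting lemma must be applied with $\delta$ small relative to $\gamma$, and the edges removed in (a) must be controlled. The latter is where I would need either an equitable version of the regularity lemma or the small-part discarding argument, together with enough refinement that intra-part edges number at most $n^2/K$.
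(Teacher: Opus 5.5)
Your proposal is correct and follows the route the paper itself indicates: the paper cites the removal lemma without proof, noting only that it is deduced from Szemer\'edi's regularity lemma together with a triangle counting lemma, and that is exactly your cleaning-plus-counting argument. One simplification of the step you flag as the main obstacle: the paper's form of the regularity lemma, applied to the auxiliary bipartite graph of Remark \ref{bipartiteornot}, also controls the diagonal boxes $V_i\times V_i$, and since $G$ has no loops your counting argument works unchanged when $i,j,l$ are not distinct (each ordered triple it produces is a genuine triangle, counted at most six times), so you never need to delete intra-part edges and can drop equitability and refinement altogether, keeping only the discarding of parts of size less than $\delta n/K$.
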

\patchcmd{\thmhead}{#3}{(#3)}{}{}

In order to deduce this from Szemer\'edi's regularity lemma, one needs a \textit{triangle counting lemma}. For a precise statement of this, the reader is referred to \cite[Theorem 1.2]{gowersregularity}. Roughly speaking, this says that given a tripartite graph $G$ on vertex sets $V_1$, $V_2$, $V_3$, if each of the three pairs of vertex sets is $\delta$-regular, then $G$ contains approximately (in terms of $\delta$) the correct number of triangles, namely, the number of triangles $G$ would contain if its edges were chosen uniformly at random. This is to be expected for a good notion of quasirandomness.

How does one generalise the above to hypergraphs? Perhaps the most obvious approach is to make the following definition.
    \begin{defn}\label{defndeltaregularhyper}
        Let $E(V_1, ..., V_k)$ be a $k$-partite $k$-graph. Let $W\subseteq V_1\timesdots V_k$. The \textit{relative density} of $E(V_1, ..., V_k)$ in $W$ is
    \[d_W(V_1, ..., V_k):=\frac{|E(V_1, ..., V_k)\cap W|}{|W|}.\]
    The \textit{density} of $E(V_1, ..., V_k)$ is $d(V_1, ..., V_k):=d_{V_1\timesdots V_k}(V_1, ..., V_k)$.

        For $\delta>0$, say that $V_1\timesdots V_k$ (or $(V_1, ..., V_k)$) is \textit{$\delta$-regular} if, for all $A_i\subseteq V_i$ with $|A_i|\geq \delta|V_i|$, $d_{A_1\timesdots A_k}(V_1, ..., V_k)\approx_\delta d(V_1, ..., V_k)$.
    \end{defn}
    With this definition, there is indeed a version of Szemer\'edi's regularity lemma for hypergraphs, due to Chung.
    \patchcmd{\thmhead}{(#3)}{#3}{}{}
    \begin{theorem}[\cite{chungregularity}]\label{chung}
    For all $k\in\N^+$ and $\delta>0$, there is $K\in\N$ such that the following holds.

    Let $H=(\bigsqcup_{i=1}^k V,E)$ be a $k$-partite $k$-graph. Then there is a partition $V=V_1\sqcup \cdots \sqcup V_K$ such that
    \[\sum_{V_{i_1}\timesdots V_{i_k}\text{ }\delta\text{-regular}}|V_{i_1}\timesdots V_{i_k}|\geq (1-\delta) |V|^k.\]
\end{theorem}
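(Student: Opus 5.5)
We would prove Chung's regularity lemma (the statement just given) by the standard energy-increment argument, copying Szemerédi's proof for graphs with $k$-fold boxes in place of rectangles. Given a partition $\mathcal{P}$ of $V$, define the \emph{index} (mean-square density)
\[q(\mathcal{P}):=\sum_{P_1,\dots,P_k\in\mathcal{P}}\frac{|P_1|\cdots|P_k|}{|V|^k}\,d(P_1,\dots,P_k)^2\in[0,1],\]
where $d(P_1,\dots,P_k)$ is the density of $E$ in the box $P_1\times\cdots\times P_k$. By Cauchy--Schwarz (convexity of $t\mapsto t^2$), refining $\mathcal{P}$ never decreases $q$.

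The key step is the increment lemma. Suppose the boxes $P_1\times\cdots\times P_k$ that fail to be $\delta$-regular have total measure more than $\delta|V|^k$. For each irregular box choose witnesses $A_i\subseteq P_i$ with $|A_i|\ge\delta|P_i|$ and $|d_{A_1\times\cdots\times A_k}-d(P_1,\dots,P_k)|>\delta$. Splitting each $P_i$ into $A_i$ and $P_i\setminus A_i$ gives a partition of the box into $2^k$ sub-boxes. The defect form of Cauchy--Schwarz shows that the contribution of that box to $q$ increases by at least $\delta^{k}\cdot\delta^2\cdot|P_1|\cdots|P_k|/|V|^k$, since the sub-box $A_1\times\cdots\times A_k$ has relative measure at least $\delta^k$ and density deviating by more than $\delta$. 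Then take $\mathcal{P}'$ to be the common refinement of $\mathcal{P}$ by all witness sets, over all irregular boxes and all coordinates, noting that a single vertex part $P$ may occur in many boxes. Each part is split into at most $2^{k|\mathcal{P}|^{k-1}}$ atoms. By monotonicity under refinement, the per-box gains survive, so $q(\mathcal{P}')\ge q(\mathcal{P})+\delta^{k+3}$.

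Iterate, starting from the trivial partition. Since $q\le1$, after at most $\delta^{-(k+3)}$ steps the irregular boxes have measure at most $\delta|V|^k$, which is exactly the conclusion. Each step sends $|\mathcal{P}|=m$ to at most $m\,2^{km^{k-1}}$, so $K$ is a tower-type function of $\delta$ and $k$, independent of $|V|$. A partition with fewer than $K$ parts can be padded with empty parts, since empty boxes contribute nothing.

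The step requiring the most care is the increment computation. One must check that the lower bound per irregular box is not destroyed when the box is refined further by witnesses coming from other boxes sharing the same vertex parts; the convexity and monotonicity argument handles this. One must also handle degenerate cases: if the parts are small, $|A_i|\ge\delta|P_i|$ could force $A_i=P_i$, but then the densities coincide, so such boxes are automatically regular. The rest is routine.
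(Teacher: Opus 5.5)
The paper gives no proof of this statement. It is quoted from Chung purely as background, to explain why $\delta$-regularity is too weak for simplex counting, so there is no argument in the paper to compare yours with.

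Your energy-increment argument is the standard proof, namely Szemer\'edi's index argument run with $k$-fold boxes, and it is correct. The key steps all check out:
\begin{itemize}
\item the defect form of Cauchy--Schwarz gives a gain of at least $\delta^{k+2}$ times the weight of each irregular box;
\item each part is cut by at most $k|\mathcal{P}|^{k-1}$ witness sets, so it splits into at most $2^{k|\mathcal{P}|^{k-1}}$ atoms;
\item the total gain per round exceeds $\delta^{k+3}$, so the process stops after at most $\delta^{-(k+3)}$ rounds, with a tower-type $K$ independent of $|V|$.
\end{itemize}

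Two details you handle correctly are worth stating explicitly in a write-up.
\begin{itemize}
\item When the same part $P$ occurs in several coordinates of a box, the witnesses in those coordinates are chosen independently. The $2^k$-split of that box is then not induced by a vertex partition. The persistence of the gain therefore uses convexity for an arbitrary refinement of a partition of the box, which is all Cauchy--Schwarz needs.
\item Padding with empty parts, rather than refining further to reach exactly $K$ parts, is the right way to meet the exact count. Refining can destroy $\delta$-regularity, whereas empty boxes contribute nothing to either side.
\end{itemize}
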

\patchcmd{\thmhead}{#3}{(#3)}{}{}
We would likewise want a $k$-graph version of the triangle removal lemma. The natural generalisation of the triangle is the $k$-simplex: the complete $k$-graph on $k+1$ vertices. Unfortunately, such a result cannot be deduced from Theorem \ref{chung}, because there is no $k$-simplex counting lemma for $\delta$-regular hypergraphs: $\delta$-regularity is not strong enough for counting $k$-simplices when $k\geq 3$. This fact is illustrated in \cite{linearhypergraphs} by the following example (which we have adapted slightly), but the authors of \cite{linearhypergraphs} describe the fact as `known'.
\begin{example}[$\delta$-regularity insufficient for counting tetrahedra]\label{deltaregcantcount}
    Let $V_1$, $V_2$, $V_3$, $V_4$ be sets of size $n$. Let $H_1$ be the random 4-partite 3-graph on $V_1\sqcup\cdots\sqcup V_4$, where hyperedges occur with probability $1/8$. Let $G_2$ be the random 4-partite graph on $V_1\sqcup\cdots\sqcup V_4$, where edges occur with probability $1/2$, and let $H_2$ be the 4-partite 3-graph on $V_1\sqcup\cdots\sqcup V_4$ whose hyperedges are precisely the triangles in $G$. Then, with high probability, for all $\delta>0$, every triple of vertex sets in both $H_1$ and $H_2$ is $\delta$-regular with density $\approx_\delta 1/8$. However, $H_1$ (which is `truly random') is expected to contain $(1/8^4)n^4$ tetrahedra, while $H_2$ is expected to contain $(1/2^6)n^4$ tetrahedra.
\end{example}

In fact, $\delta$-regularity is insufficient even to guarantee the \textit{existence} of tetrahedra --- see \cite{linearhypergraphs}.

Thus, $\delta$-regularity is not a good notion of quasirandomness for $k$-graphs when $k\geq 3$. What, then, is the correct notion? As mentioned in Section \ref{secintro}, this question was independently addressed by several groups of authors. We will give a much abridged version of Gowers' accounts in \cite{gowershypergraphs} and \cite{gowersregularity}. The reader should bear in mind that, although we shall attribute the results to Gowers, similar results have been obtained by others --- in particular, by Nagle, R\"odl, Schacht, and Skokan \cite{nrs, rodlskokan} --- and is referred to \cite{regularityhistory} for a more complete historical account.

As alluded to in Section \ref{secintro}, the problem is not so much that we need a better notion of quasirandomness per se, but that the parts of our partition have the wrong shape. We have been considering what it means for a hypergraph $H=(V,E)$ to be quasirandom inside a box $V_1\timesdots V_k$, where $V_i\subseteq V$, but it is time to move beyond boxes.

Observe that a partition of $V^k$ into boxes is induced by a partition $\mathcal{P}$ of the vertex set $V$: each part of the partition of $V^k$ is obtained by choosing $P_1, ..., P_k\in\mathcal{P}$ and forming the box
\[\{(v_1, ..., v_k)\in V^k: v_j\in P_j\text{ for all }j\in [k]\}.\]

It turns out that we should partition not only the vertex set $V$ but $V^1, ..., V^{k-1}$. Naming the partitions $\mathcal{P}_1, ..., \mathcal{P}_{k-1}$, we can partition $V^k$ as follows. Each part of the partition is obtained by choosing $P_I\in \mathcal{P}_{|I|}$ for each $\emptyset\neq I\subsetneq [k]$ and forming the `simplicial complex'
\[\{(v_1, ..., v_k)\in V^k: v_I\in P_I\text{ for all }\emptyset\neq I\subsetneq [k]\}.\]
Note that if the partitions of $V^2, ..., V^{k-1}$ are trivial, the resulting simplicial complexes are boxes. Thus, a partition into boxes is a special case of a partition into simplicial complexes.

Why the name of `simplicial complexes'? Recall that a $(k-1)$-dimensional \textit{(abstract) simplicial complex} is a set $\Sigma$ of sets of size at most $k$, such that if $B\in \Sigma$ and $A\subseteq B$ then $A\in \Sigma$. Hence, we are really saying that each part of the partition of $V^k$ encodes the \textit{data} of a $(k-1)$-dimensional simplicial complex. Each part of the partition consists of $k$-tuples, which are in some sense generated by collections of $l$-tuples for $l\in [k-1]$.

The $k$-graph generalisation of Szemer\'edi's regularity lemma now says that, given a finite $k$-partite $k$-graph $H=(\bigsqcup_{i=1}^k V, E)$ and an error parameter $\delta>0$, there are partitions of $V^1, ..., V^{k-1}$ whose sizes are bounded in terms of $\delta$, inducing a partition of $V^k$ into simplicial complexes, such that $E$ is quasirandom with respect to most of the simplicial complexes. The precise notion of quasirandomness, which we call \textit{$\delta$-quasirandomness} here, is rather involved and is outside the scope of this paper; we refer the reader to \cite{gowersregularity} (or \cite{gowershypergraphs} for the $k=3$ case) for an exposition.

It is nonetheless worth highlighting one key feature of $\delta$-quasirandomness; we specialise to the $k=3$ case for this. In this case, we have partitions $\mathcal{P}_1$ of $V^1$ and $\mathcal{P}_2$ of $V^2$, and each simplicial complex $Q$ has the form
\[Q=\{(v_1, v_2, v_3)\in V^k: v_i\in V_i, (v_i, v_j)\in G_{ij}\text{ for all }i,j\}\]
for some $V_1, V_2, V_3\in \mathcal{P}_1$ and $G_{12}, G_{13}, G_{23}\in\mathcal{P}_2$. Since the $G_{ij}$ are subsets of $V^2$, we may view them as bipartite graphs on $V\sqcup V$. In this light, $Q$ is precisely the set of triangles in the graph $G_{12}(V_1, V_2)\cup G_{13}(V_1, V_3)\cup G_{23}(V_2, V_3)$.

It turns out that, in order to define $\delta$-quasirandomness, one should specify not only that the hypergraph relation is quasirandom with respect to the simplicial complex as a whole, but also that the simplicial complex itself is quasirandom at each level: namely, that the graphs $G_{ij}(V_i, V_j)$ should also be quasirandom. This notion of quasirandomness is none other than $\varepsilon$-regularity (for some $\varepsilon=\varepsilon(\delta)$), since we have established that this is a good notion of quasirandomness for graphs.

We now state a version of Gowers' hypergraph regularity lemma in \cite{gowersregularity}, which --- having avoided a definition of $\delta$-quasirandomness --- is necessarily much abridged, but is sufficient for motivating the rest of this paper.
\patchcmd{\thmhead}{(#3)}{#3}{}{}
\begin{theorem}[\cite{gowersregularity}]\label{gowersregularity}
    For all $k\in\N^+$ and $\delta>0$, there is $K\in\N$ such that the following holds.

    Let $H=(\bigsqcup_{i=1}^k V,E)$ be a $k$-partite $k$-graph. Then there are partitions of $V^1, ..., V^{k-1}$ of size at most $K$, inducing a partition $\mathcal{Q}$ of $V^k$ into $(k-1)$-dimensional simplicial complexes, such that
    \[\sum_{\substack{Q\in\mathcal{Q}\\Q\text{ }\delta\text{-quasirandom}}}|Q|\geq (1-\delta) |V|^k.\]
\end{theorem}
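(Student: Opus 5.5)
The statement to prove is Gowers' hypergraph regularity lemma, Theorem \ref{gowersregularity}. The plan is the standard energy-increment argument, iterated over the levels $V^1,\dots,V^{k-1}$ and organised by induction on $k$.

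\textbf{Energy and increment.} For a family of partitions $\mathcal{P}_1,\dots,\mathcal{P}_{k-1}$ of $V^1,\dots,V^{k-1}$, with induced partition $\mathcal{Q}$ of $V^k$, define the energy (mean-square density)
\[\mathcal{E}(\mathcal{Q}) := \sum_{Q\in\mathcal{Q}} \frac{|Q|}{|V|^k}\, d_Q(V,\dots,V)^2 \in [0,1].\]
Refining any $\mathcal{P}_i$ refines $\mathcal{Q}$, so the energy does not decrease; this follows from Cauchy--Schwarz. The key step is an \emph{increment lemma}. Suppose the complexes $Q$ that fail $\delta$-quasirandomness have total measure at least $\delta|V|^k$. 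Then there is a refinement in which each $\mathcal{P}_{k-1}$-part is split into boundedly many pieces and the energy rises by at least $c(\delta)>0$.

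The refinement comes from the definition of $\delta$-quasirandomness, which is a small box/octahedral norm of $1_E-d_Q$ relative to the complex $Q$. By the counting and Cauchy--Schwarz inequalities underlying that norm, its failure yields $(k-1)$-graphs $F_1,\dots,F_k$, with $F_i\subseteq V^{[k]\setminus\{i\}}$, such that $1_E-d_Q$ correlates with $1_Q\prod_i 1_{F_i}$. Refining $\mathcal{P}_{k-1}$ by the $F_i$ captures this correlation, and a standard $L^2$-projection computation gives the energy gain.

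\textbf{Lower levels and iteration.} Refining $\mathcal{P}_{k-1}$ destroys the quasirandomness of the lower-level parts, which the definition requires: for example, the graphs $G_{ij}$ must be $\varepsilon$-regular when $k=3$. So after each increment step we re-regularise $\mathcal{P}_1,\dots,\mathcal{P}_{k-2}$ by the inductive hypothesis, with regularity parameters chosen much smaller than the current complexity, of Ackermann-type dependence. We also make $\mathcal{P}_{k-1}$ quasirandom relative to the new lower-order complexes. Since the energy is bounded by $1$ and each step gains $c(\delta)$, the process stops after at most $1/c(\delta)$ steps, and the resulting partition sizes are bounded by some $K(k,\delta)$.

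\textbf{Main obstacle.} The main difficulty is making this bookkeeping consistent. Re-regularising the lower levels must not lose the energy gained at level $k-1$, which holds because it is a further refinement. The lower-level quasirandomness must also be fine enough, relative to the densities of the top-level cells, for a \emph{counting lemma} to apply; that counting lemma is what justifies the correlation step and the structure of $\delta$-quasirandomness. Proving the counting lemma by induction on $k$, jointly with the regularity lemma, is the hardest step. It is exactly the content of \cite{gowersregularity}, and in this abridged setting we would invoke it from there rather than reprove it.
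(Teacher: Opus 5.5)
Your proposal is consistent with the paper, which gives no proof of this statement. The paper quotes it from \cite{gowersregularity} in deliberately abridged form, with $\delta$-quasirandomness left undefined, so deferring the substance to Gowers (the counting lemma, proved jointly with the regularity lemma by induction on $k$) is exactly the paper's stance. Your energy-increment outline is a fair description of the cited argument.

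The one step you state too glibly is the increment. The gain $c(\delta)$ must not depend on the current complexity, or the iteration need not terminate. The witness $F_1,\dots,F_k$ obtained from the standard argument by fixing one corner of the octahedron lives on a fraction of $Q$ that is a product of lower-level densities, so it does not give such a gain by itself. A complexity-independent increment needs more, for instance refining by many such witnesses at once and controlling their overlaps with the counting lemma. That argument is also part of what you would be citing from Gowers.
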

\patchcmd{\thmhead}{#3}{(#3)}{}{}
Associated to this notion of quasirandomness, Gowers proves a $k$-simplex counting lemma, which can be combined with Theorem \ref{gowersregularity} to give the desired removal lemma.
\patchcmd{\thmhead}{(#3)}{#3}{}{}
\begin{theorem}[\cite{gowersregularity}]\label{simplexremovallemma}
    For all $\gamma >0$, there is $\alpha >0$ such that the following holds. If $G$ is a $k$-uniform hypergraph on $n$ vertices with fewer than $\alpha n^{k+1}$ $k$-simplices, then it can be made $k$-simplex-free by removing at most $\gamma n^k$ hyperedges.
\end{theorem}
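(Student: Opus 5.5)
We would deduce the removal lemma from Gowers' regularity lemma (Theorem \ref{gowersregularity}) together with the associated $k$-simplex counting lemma from \cite{gowersregularity}. This follows the same pattern as the deduction of the triangle removal lemma (Theorem \ref{removallemma}) from Szemer\'edi's regularity lemma and triangle counting.

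\textbf{Reduction.} First pass to a partite setting, as in Remark \ref{bipartiteornot}. Let $V$ be the vertex set of $G$, with $|V|=n$. Form the $(k+1)$-partite $k$-graph on $k+1$ disjoint copies of $V$, in which each $k$-subset of the copies carries a copy of $E$. A $k$-simplex of $G$ gives $(k+1)!$ partite simplices, and conversely each partite simplex whose $k+1$ vertices are distinct comes from a simplex of $G$. Partite simplices with a repeated vertex number only $O_k(n^k)$, which is absorbed into the count for $n$ large; small $n$ is handled by taking $\alpha<n^{-(k+1)}$. So it suffices to prove the partite statement.

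\textbf{Regularity and cleaning.} Fix $\gamma$ and apply Theorem \ref{gowersregularity} with a parameter $\delta$ to be chosen much smaller than $\gamma$. Actually we would use the stronger form in \cite{gowersregularity}, where the quasirandomness at level $j$ may be taken to depend on the number of parts and the densities at levels below $j$. This gives partitions of $V^1,\dots,V^{k-1}$ into at most $K$ parts each, inducing simplicial complexes. Now delete from $E$ every hyperedge lying in a complex $Q$ of one of the following three kinds:
\begin{itemize}
\item[(i)] $Q$ is not $\delta$-quasirandom;
\item[(ii)] the relative density of $E$ in $Q$ is less than $\gamma/4$;
\item[(iii)] some lower-level cell of $Q$ (a part of $V^j$, viewed inside the complex on the cells below it) has relative density below some $\eta=\eta(\gamma,K)$.
\end{itemize}
Kind (i) costs at most $\delta n^k$ hyperedges, kind (ii) at most $(\gamma/4)n^k$, and kind (iii) at most $O_k(K^{O_k(1)}\eta)\,n^k$. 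Choosing $\delta,\eta$ small, the total deleted is at most $\gamma n^k$.

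\textbf{Counting.} If the cleaned hypergraph is simplex-free, we are done. Otherwise, take one surviving simplex on vertices $v_1,\dots,v_{k+1}$. The lower-dimensional faces of $\{v_1,\dots,v_{k+1}\}$ (those of size $<k$) select one cell at each level, forming a $(k+1)$-vertex simplicial complex. Each of its $k$-faces lies in a complex that is quasirandom, with $E$-density at least $\gamma/4$, and all of whose lower cells have density at least $\eta$. The simplex counting lemma then gives at least roughly $(\gamma/4)^{k+1}\prod(\text{lower densities})\,n^{k+1}\ge c(\gamma,K,\eta)\,n^{k+1}$ simplices in the cleaned hypergraph, hence also in $G$. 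Since $K,\eta,\delta$ depend only on $\gamma$ (and $k$), setting $\alpha<c$ yields the contradiction.

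\textbf{Main obstacle.} The main difficulty is the counting lemma itself, together with the hierarchy of parameters it needs. The quasirandomness required at level $j$ must be small relative to the densities of the cells at levels below $j$, and those densities are only known after the partition has been produced. I would therefore invoke the version of the regularity lemma with an arbitrary decreasing function $\varepsilon(\cdot)$ of the partition size, as in \cite{gowersregularity}, and cite Gowers' counting lemma rather than reprove it. The cleaning and bookkeeping above is then routine.
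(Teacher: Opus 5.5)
Your proposal is correct and is essentially the argument the paper has in mind: the paper gives no proof of its own, merely citing \cite{gowersregularity} and remarking that the removal lemma follows by combining Theorem \ref{gowersregularity} with Gowers' $k$-simplex counting lemma, which is exactly the regularity--cleaning--counting deduction you sketch. One small clarification: in Gowers' relatively normalised setup the parameter hierarchy runs top-down, so the quasirandomness needed at level $j$ must be small in terms of the densities at levels $j,\dots,k$ and the parameters above $j$, not the densities below. This does not affect your argument, since you invoke his regularity and counting lemmas as black boxes with the lower-level parameters allowed to depend on the partition complexity.
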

    We reassure the reader concerned over the precise nature of $\delta$-quasirandomness that the remainder of this paper is comprehensible with this definition as a black box. Indeed, recall that our goal is to find out when a hypergraph $E$ may be decomposed into simplicial complexes $Q$ that are \textit{($E$-)homogeneous}, that is, $E\cap Q=Q$ or $\emptyset$, and it suffices to note that homogeneity is a (very strong) special case of $\delta$-quasirandomness.

    In fact, since we will be working with the special case of homogeneity, we can simplify our construction of simplicial complexes. We illustrate this by first specialising to 3-graphs. Recall that in Gowers' regularity lemma for a 3-partite 3-graph $(\bigsqcup_{i=1}^3 V,E)$, there are partitions $\mathcal{P}_1,\mathcal{P}_2$ of $V^1, V^2$ which induce a partition of $V^3$ into simplicial complexes. Each simplicial complex has the form
    \[Q:=\{(v_1, v_2, v_3)\in V^3: v_i\in P_i\text{ for all }i, (v_i, v_j)\in P_{ij}\text{ for all }i,j\},\]
where $P_1, P_2, P_3\in\mathcal{P}_1$ and $P_{12}, P_{23}, P_{13}\in \mathcal{P}_2$. Now, for each $i,j$, let $P'_{ij}:=(P_i\times P_j)\cap P_{ij}$. We may rewrite $Q$ as
\begin{equation}\label{eqnnewpartition}
    \{(v_1, v_2, v_3)\in V^3: (v_i, v_j)\in P'_{ij}\text{ for all }i,j\}.
\end{equation}
In this way, we may combine $\mathcal{P}_1$ and $\mathcal{P}_2$ into a single partition $\mathcal{P}'$ of $V^2$, such that each simplicial complex is of the form (\ref{eqnnewpartition}) for some $P'_{12}, P'_{23}, P'_{13}\in \mathcal{P}'$.

For the purposes of Gowers' regularity lemma, this manoeuvre would be misguided: recall that the notion of $\delta$-quasirandomness depends on the partitions of $V^1$ and $V^2$ separately. However, the notion of homogeneity only depends on the simplicial complex $Q$ as a whole: $Q$ is homogeneous if the restriction of the hypergraph to $Q$ is complete or empty. Thus, when working with homogeneity, we can combine the partitions of $V^1$ and $V^2$ as above.

Summarising and extrapolating to $k$-graphs $(\bigsqcup_{i=1}^k V,E)$, to find a partition of $V^k$ into $E$-homogeneous simplicial complexes, it suffices to consider those induced by a partition $\mathcal{P}$ of $V^{k-1}$; the parts of the partition (simplicial complexes) then have the form
\[\{ (v_1, ..., v_k)\in V^k: v_{\neq i}\in P_i\text{ for all } i\},\]
where $P_1, ..., P_k\in\mathcal{P}$. A set of this form is known as a \textit{cylinder (intersection) set}: given $P_1, ..., P_k\in\mathcal{P}$, for all $i\in [k]$, the set $\{(v_1, ..., v_k)\in V^k: v_{\neq i}\in P_i\}$ can be thought of as `the cylinder above $P_i$' --- it is the set of points whose appropriate projection lies in $P_i$ --- and the set in question is the intersection of these cylinders.
\patchcmd{\thmhead}{#3}{(#3)}{}{}
\section{Higher-arity distality}\label{sechigherarity}
In this section, we state the definitions of $k$-distality and strong $k$-distality from \cite{walker}, and give some basic properties and examples. Throughout this section, fix a complete $\ELL$-theory $T$, and let $\monster\models T$ be sufficiently saturated. Unless otherwise specified, we only work with subsets of $\monster$ that are small.

To motivate these definitions of higher-arity distality, let us recall some facts about distality. The following was Simon's original definition of distality in \cite{simondistal}.
\begin{defn}
    Say that $T$ (and any $M\models T$) is \textit{distal} if the following holds.

    Let $I_0, I_1, I_2$ be infinite sequences of tuples from $\monster$. Let $a_1, a_2$ be tuples from $\monster$. If $I_0+I_1+a_2+I_2$ and $I_0+a_1+I_1+I_2$ are indiscernible, then $I_0+a_1+I_1+a_2+I_2$ is indiscernible.
\end{defn}

It is almost folklore that the previous definition of distality implies NIP, but to our knowledge, a proof only appears in the literature in \cite[Corollary 6.8]{walker}, due to Chernikov. In particular, Simon's paper \cite{simondistal} uses NIP as an extra assumption rather than as a consequence of distality.

In \cite{simondistal}, Simon proves (modulo this \textit{a posteriori} superfluous NIP assumption) that the following is an equivalent characterisation of distality.

\begin{defn}[External characterisation of distality]
    Say that $T$ (and any $M\models T$) is \textit{distal} if the following holds.

    Let $I, J$ be infinite sequences of tuples from $\monster$. Let $a,b$ be tuples from $\monster$. If $I+a+J$ is indiscernible and $I+J$ is indiscernible over $b$, then $I+a+J$ is indiscernible over $b$.
\end{defn}
Walker \cite{walker} generalised these two definitions as follows.
\begin{defn}\label{defnkdistal}
    Let $k\in\N^+$. Say that $T$ (and any $M\models T$) is \textit{$k$-distal} if the following holds.

    Let $I_0, ..., I_{k+1}$ be infinite sequences of tuples from $\monster$. Let $a_1, ..., a_{k+1}$ be tuples from $\monster$. If $I_0+a_1+I_1+a_2+\cdots+I_{m-1}+I_m+\cdots+a_{k+1}+I_{k+1}$ is indiscernible for all $m\in [k+1]$, then $I_0+a_1+I_1+a_2+\cdots+I_k+a_{k+1}+I_{k+1}$ is indiscernible.
\end{defn}
\begin{defn}\label{defnkstronglydistal}
    Let $k\in\N^+$. Say that $T$ (and any $M\models T$) is \textit{strongly $k$-distal} if the following holds.

    Let $I, J$ be infinite sequences of tuples from $\monster$. Let $a, b_1, ..., b_k$ be tuples from $\monster$. If $I+a+J$ is indiscernible over $b_{\neq m}$ for all $m\in [k]$ and $I+J$ is indiscernible over $b_1\cdots b_k$, then $I+a+J$ is indiscernible over $b_1\cdots b_k$.
\end{defn}
Here, $b_{\neq m}:=\{b_i: i\in [k]\setminus \{m\}\}$ for all $m\in [k]$. It is straightforward to see that (strong) $k$-distality implies (strong) $(k+1)$-distality, and that strong $k$-distality implies $k$-distality. The converse to the latter statement does not hold \cite[Corollary 4.23]{artemfrancis}.

Note that both $k$-distality and strong $k$-distality say that the definable interaction of $k+1$ variables can be controlled by the definable interactions of $k$-sized subsets of those variables. Indeed, $k$-distality says that if any $k$ of $a_1, ..., a_{k+1}$ can be inserted to make an indiscernible sequence, then all $k+1$ of them can be, and strong $k$-distality says that if $I+J$ is indiscernible with respect to any $k$ of $a, b_1, ..., b_k$, then it is indiscernible with respect to all $k+1$ of them.

Let us give some examples of (strongly) $k$-distal theories. Since we are interested in higher-arity distality, we will focus on non-distal examples; a list of examples of distal theories can be found in \cite[Section 2.3]{regularitylemma}. We begin by describing a somewhat degenerate source of (strongly) $k$-distal theories. For $k\in\N^+$, say that $T$ (and any $M\models T$) is \textit{$k$-ary} if every relation is equivalent in $T$ to a Boolean combination of $k$-ary relations; equivalently, if $T$ admits quantifier elimination in a $k$-ary relational language.
\patchcmd{\thmhead}{(#3)}{#3}{}{}
\begin{fact}[{\cite[Corollary 4.14]{walker}}]\label{k-aryisstronglyk-distal}
    Let $k\in \N^+$. If $T$ is $k$-ary, then $T$ is (strongly) $k$-distal.
\end{fact}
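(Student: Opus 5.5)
Since strong $k$-distality implies $k$-distality, it suffices to prove that a $k$-ary $T$ is strongly $k$-distal. I will verify Definition \ref{defnkstronglydistal} directly.

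Fix infinite sequences $I,J$ and tuples $a,b_1,\dots,b_k$ as in that definition. By hypothesis, $I+a+J$ is indiscernible over $b_{\neq m}$ for every $m\in[k]$, and $I+J$ is indiscernible over $b:=b_1\cdots b_k$. We must show that $I+a+J$ is indiscernible over $b$. So take two increasing finite tuples $\bar c=(c_1,\dots,c_n)$ and $\bar c'=(c_1',\dots,c_n')$ from $I+a+J$; we must show $\tp(\bar c/b)=\tp(\bar c'/b)$. Since $T$ has quantifier elimination in a $k$-ary relational language, it suffices to check that each atomic formula $R(w_1,\dots,w_k)$ has the same truth value when its arguments are drawn from $\bar c b$ and the corresponding entries of $\bar c' b$. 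Here each $w_j$ is a single coordinate of some $c_i$, $c_i'$ or $b_l$; we work with the language expanded so that tuples are handled coordinatewise, which is harmless after naming coordinates.

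The key combinatorial point is that an instance of $R$ involves at most $k$ coordinates. I will split into two cases according to which of the tuples $b_1,\dots,b_k$ those coordinates come from.

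\emph{Case 1: the coordinates from $b$ omit some $b_m$ entirely.} Then the instance lives in $\bar c\, b_{\neq m}$. Since $I+a+J$ is indiscernible over $b_{\neq m}$, we have $\tp(\bar c/b_{\neq m})=\tp(\bar c'/b_{\neq m})$, so the truth values agree.

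\emph{Case 2: the instance uses a coordinate of each of $b_1,\dots,b_k$.} Then $k$ coordinates are already used by $b$, so no coordinate of $\bar c$ appears and the instance is a formula in $b$ alone. Its truth value is then trivially the same on both sides.

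Hence $\bar c b$ and $\bar c' b$ satisfy the same atomic formulas, so they satisfy the same quantifier-free formulas and, by quantifier elimination, have the same type. Notably, the hypothesis on $I+J$ is not even needed.

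The only delicate step is the bookkeeping around tuples. Variables such as $a$ and $b_l$ may be tuples, and $k$-arity refers to single variables. I expect this to be the main obstacle, and I would handle it as follows. An atomic $R$ applied to at most $k$ single coordinates touches at most $k$ of the blocks $\{b_1,\dots,b_k\}\cup\{\text{elements of }\bar c\}$. If it touches $\bar c$ at all, it touches at most $k-1$ of the $b$-blocks, and so it misses some $b_m$. This puts us in Case 1, and the argument goes through unchanged.
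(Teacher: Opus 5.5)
Your proof is correct. After passing to relations in at most $k$ single coordinates, any instance that involves a coordinate of $\bar c$ meets at most $k-1$ of the blocks $b_1,\dots,b_k$. It is therefore decided by the indiscernibility of $I+a+J$ over some $b_{\neq m}$. An instance not involving $\bar c$ is a formula in $b$ alone. This is a complete verification of Definition~\ref{defnkstronglydistal}.

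The paper gives no argument of its own and simply cites Walker, so your direct check is a self-contained substitute. It also proves slightly more than stated: for $k$-ary $T$, the hypothesis that $I+J$ be indiscernible over $b_1\cdots b_k$ is superfluous.

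One point needs care in your reduction step: \emph{every} basic relation, equality included, must have at most $k$ arguments. For $k\geq 2$ this is automatic. For $k=1$, however, you must use the paper's first formulation, namely that every relation, including $x=y$, is a Boolean combination of unary ones. The reading ``quantifier elimination in a unary language'' with equality built in is not enough. An infinite set has quantifier elimination in the empty language but is not distal. Correspondingly, the instance $w_1=w_2$, with $w_1$ a coordinate of $\bar c$ and $w_2$ a coordinate of $b_1$, would fall into neither of your two cases.
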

\patchcmd{\thmhead}{#3}{(#3)}{}{}

Since (strong) $k$-distality is intended to characterise the definable interactions of $k+1$ variables, $k$-ary examples of (strongly) $k$-distal theories are not very illuminating. Indeed, in a $k$-ary theory, every $(k+1)$-ary relation is degenerate, in the sense that it is a Boolean combination of $k$-ary relations. Furthermore, our goal in this paper is to find model-theoretic contexts for homogeneous $(k+1)$-graph regularity lemmas; it is clear that $k$-ary theories are not fit for this purpose.

In \cite{trivialitypaper}, we give four classes of (strongly) $k$-distal theories that are not $k$-ary. We describe one such class here. For $k\in\N^+$, the \textit{Johnson graph} $\mathfrak{J}(k)$ is the graph with vertex set $\binom{\N}{k}$ such that, for $x,y\in \binom{\N}{k}$, $(x,y)$ is an edge if and only if $|x\cap y|=k-1$. Consider this as a structure in the language of graphs. Observe that this is interpretable in $(\N,=)$, and so is $\omega$-stable; in particular, it is not distal.
\patchcmd{\thmhead}{(#3)}{#3}{}{}
\begin{fact}[{\cite[Proposition 3.19]{trivialitypaper}}]
    The structure $\mathfrak{J}(k)$ is (strongly) 2-distal.
\end{fact}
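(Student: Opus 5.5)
The approach is to translate everything into the combinatorics of finite subsets of $\N$, using the fact that $\mathfrak{J}(k)$ is a reduct-like interpretation of $(\N,=)$. Then verify Definition \ref{defnkstronglydistal} for $k=2$ directly, via a $\Delta$-system description of indiscernible sequences of finite sets. Since strong $2$-distality implies $2$-distality, it suffices to prove the strong version.

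\emph{Step 1: types in $\mathfrak{J}(k)$.} I would show that the type of a finite tuple $(A_1,\dots,A_n)$ of vertices is determined by its \emph{Venn data}: the sizes of all atoms $\bigcap_{i\in S}A_i\setminus\bigcup_{i\notin S}A_i$ for $\emptyset\neq S\subseteq[n]$.

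The plan is to show that every automorphism of $\mathfrak{J}(k)$ is induced by a permutation of $\N$. This is the classical description of automorphisms of Johnson graphs, via the two families of maximal cliques, which correspond to $(k-1)$-sets and $(k+1)$-sets; on infinite $\N$ the two families are distinguishable, since only one kind of clique is infinite. Next, there are only finitely many Venn data for $n$-tuples, so the structure is $\omega$-categorical, and a saturated model $\monster$ is $\aleph_0$-homogeneous. Hence types of finite tuples are exactly $\mathrm{Sym}$-orbits, i.e.\ Venn data. In a saturated elementary extension the same description holds, replacing $\N$ by a larger set $X$: vertices are $k$-subsets of $X$ and types are given by Venn data.

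I expect this step to be the main obstacle, specifically making the passage to the monster model rigorous. If the automorphism argument is awkward, I would instead prove directly that the Venn data is definable in the graph language. For instance, $|A\cap B| = k-\mathrm{dist}(A,B)$, and higher atom sizes can be expressed by quantifying over suitable cliques that play the role of points.

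\emph{Step 2: indiscernible sequences are $\Delta$-systems.} Let $(c_i)_{i\in I}$ be an infinite indiscernible sequence of finite tuples of vertices, and view each $c_i$ as a finite family of $k$-sets with underlying finite set $\bigcup c_i$. By Step 1 and indiscernibility, each point of $X$ lies either in all $c_i$ in a fixed \emph{pattern}, forming the core $C$, or in exactly one $c_i$. The second case follows from the standard argument: if a point lay in $c_i$ and $c_j$ but not in all terms, comparing the types of pairs and triples forces infinitely many points into a finite set.

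The same holds for any indiscernible sequence over a finite parameter tuple $b$. Moreover, in that case the private parts of the $c_i$ are disjoint from $\bigcup b$. The reason is that $\bigcup b$ is finite, while the private parts are pairwise disjoint and, by indiscernibility over $b$, have constant intersection pattern with $b$.

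\emph{Step 3: verifying strong $2$-distality.} Suppose $I+a+J$ is indiscernible over $b_1$ and over $b_2$, and $I+J$ is indiscernible over $b_1b_2$. By Step 2 applied to $I+a+J$ over $b_j$, the element $a$ decomposes as the common core $C$ plus a private part $P_a$ disjoint from all other terms and from $\bigcup b_1$. By the same argument over $b_2$, $P_a$ is also disjoint from $\bigcup b_2$. By Step 2 applied to $I+J$ over $b_1b_2$, every other term likewise has private part avoiding $\bigcup b_1\cup\bigcup b_2$, with the same pattern as $a$ (the pattern is fixed by indiscernibility of $I+a+J$ alone).

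Consequently, for any increasing tuple $\bar e$ from $I+a+J$, the Venn data of $\bar e\,b_1b_2$ is determined by three things: the core $C$ together with its relation to $b_1b_2$, which is the same for all terms; the fixed private patterns; and the fact that private parts avoid everything else. This data does not depend on whether $a$ occurs in $\bar e$. By Step 1, $I+a+J$ is then indiscernible over $b_1b_2$.

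As a sanity check, the hypothesis over each single $b_m$ is exactly what rules out $b$ capturing a private point of $a$. That is precisely the phenomenon which makes $\mathfrak{J}(k)$, like $(\N,=)$, non-distal.
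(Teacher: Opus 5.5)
The paper does not prove this Fact; it quotes it from an external reference, so there is no in-paper argument to compare against. Judged on its own, your argument is correct. In particular, the $\Delta$-system analysis in Step~2 does what you need. The disjointness of private parts from $\bigcup b$ holds because the number of private points of $c_i$ in a given Venn atom of $b$ equals the count for all points of $c_i$ minus the count for core points. That difference is constant in $i$, while the private parts are pairwise disjoint and $\bigcup b$ is finite.

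First remark: Step~3 never uses the hypothesis that $I+J$ is indiscernible over $b_1b_2$. Apply the disjointness part of Step~2 to $I+a+J$ over $b_1$ and over $b_2$ separately. This already shows that every private part of every term misses $\bigcup b_1\cup\bigcup b_2$, and your Venn-data count then goes through. So you have actually proved something stronger: in $\mathfrak{J}(k)$, a sequence indiscernible over $b_1$ and over $b_2$ is indiscernible over $b_1b_2$. This is an indiscernible-triviality property, and strong $2$-distality follows from it immediately. It is presumably the mechanism behind the cited result, and your direct computation has the advantage of being self-contained.

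Second remark: the step you flagged as the main obstacle can be bypassed. Since $\mathrm{Sym}(\N)$ acts oligomorphically, $\mathrm{Th}(\mathfrak{J}(k))$ is $\omega$-categorical. Any counterexample to strong $2$-distality can therefore be shrunk into the unique countable model $J(\N,k)$:
\begin{enumerate}[(i)]
\item keep countably infinitely many terms of $I$ and $J$, including those occurring in a witness to non-indiscernibility over $b_1b_2$;
\item take a countable elementary submodel containing all the data.
\end{enumerate}
Indiscernibility is unaffected by this, and in $J(\N,k)$ types over finite tuples are automorphism orbits. The direction ``same Venn data $\Rightarrow$ same type'', used in Step~3, is then trivial. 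The direction ``same type $\Rightarrow$ same Venn data'', used in Step~2, is exactly where you need one of two facts: that every graph automorphism is induced by a permutation of $\N$, or directly that Venn data is definable. So you never need to identify the monster model as $J(X,k)$.

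Minor correction: in Step~2, ``exactly one $c_i$'' should read ``at most one $c_i$''.
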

\patchcmd{\thmhead}{#3}{(#3)}{}{}
It is easy to observe that $\mathfrak{J}(k)$ is not 2-ary whenever $k\geq 2$ (see, for example, \cite[before Proposition 3.18]{trivialitypaper}). In fact, by \cite[Proposition 3.18]{trivialitypaper}, there is an increasing unbounded function $g:\N^+\to\N^+$ such that $\mathfrak{J}(k)$ is not $g(k)$-ary for all $k\in\N^+$.

The structure $\mathfrak{J}(k)$, for $k\geq 2$, will be a helpful example to keep in mind for the rest of the paper. In particular, since we will prove a homogeneous regularity lemma for 3-graphs definable in an NIP strongly 2-distal structure, this can be applied to the relation $\varphi_l(x,y,z)$ saying that $|x\cap y\cap z|=l$, where $0\leq l\leq k-1$ is a fixed integer; this relation is definable in $\mathfrak{J}(k)$ \cite[before Proposition 3.16]{trivialitypaper}.
\section{Higher-arity strong honest definitions}\label{secSHD}
In this section, we introduce $k$-strong honest definitions for $(k+1)$-ary formulas and show that their existence characterises strong $k$-distality among NIP theories. Not only is it a key tool for the proof of our main result --- a regularity lemma for NIP strongly $k$-distal structures --- it is also a result of independent interest. Just as strong honest definitions have proved crucial in the development of distality, it is our hope that $k$-strong honest definitions will take on the same role in the development of strong $k$-distality.

Throughout this section, fix a complete $\ELL$-theory $T$, and let $\monster\models T$ be sufficiently saturated. Unless otherwise specified, we only work with subsets of $\monster$ that are small. We reiterate our abuse of notation that if $y$ is an $n$-tuple with entries in a set $Y$ (that is, $y\in Y^n$), we sometimes simply write $y\in Y$, but $X\subseteq Y$ always means $X\subseteq Y^1$.

Recall the definition of strong honest definitions for a binary formula. 
\begin{defn}
    Let $\varphi(x;y)\in L$. A formula $\psi(x;z)\in L$ is a \textit{strong honest definition} for $\varphi$ if the following holds.

    Let $B\subseteq M\models T$ with $2\leq |B|< \infty$, and let $a\in M$. Then there is $c\in B$ such that, for all $b\in B$,
    \[a\models \psi(x;c)\vdash \varphi(x;b)\leftrightarrow \varphi(a;b).\]
\end{defn}
The seminal result that is \cite[Theorem 21]{extdefinable} states that the existence of strong honest definitions characterises distality. We expand the statement slightly as follows.
\begin{theorem}\label{distalequivSHD}
    The following are equivalent.
    \begin{enumerate}[(i)]
        \item The theory $T$ is distal.
        \item Every formula $\varphi(x;y)\in L$ has a strong honest definition.
        \item Let $\varphi(x;y)\in L$, $\emptyset\neq B\subseteq M\models T$, and $a\in M$. There is $\psi(x;z)\in L$ such that, for all finite $\bar{B}\subseteq B$, there is $c\in B$ such that, for all $b\in \bar{B}$,
    \[a\models \psi(x;c)\vdash \varphi(x;b)\leftrightarrow \varphi(a;b).\]
    \end{enumerate}
\end{theorem}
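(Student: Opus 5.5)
The plan is to show (i)$\Leftrightarrow$(ii) by appealing to \cite[Theorem 21]{extdefinable}, and to prove (ii)$\Rightarrow$(iii)$\Rightarrow$(i) directly, so that (iii) is inserted into the known cycle.

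For (ii)$\Rightarrow$(iii): let $\psi$ be a strong honest definition for $\varphi$ and take a finite $\bar B\subseteq B$. If $|\bar B|\geq 2$, apply the definition to $\bar B$ and $a$. This gives $c\in\bar B\subseteq B$ with the required property. If $|\bar B|\leq 1$, enlarge it to a two-element subset of $M$. If that enlargement might leave $B$ (the case $|B|=1$), we instead add a dummy parameter to $\psi$, or replace $\psi$ by $\psi(x;z)\wedge\bigwedge_{b}(\varphi(x;b)\leftrightarrow\varphi(a;b))$ with the relevant $b$ taken among the parameters. The cleanest fix is to use a tuple $c$ from $B$ that repeats the single element. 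Because (iii) lets $\psi$ depend on $a$ and $B$, this edge case is harmless.

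For (iii)$\Rightarrow$(i), I would go through the external characterisation of distality. Suppose $I+a+J$ is indiscernible and $I+J$ is indiscernible over $b$. Assume for contradiction that some $\varphi(x_0,\dots,x_n;b)$ witnesses that $I+a+J$ is not indiscernible over $b$. Reduce first to the case where $a$ is one element inserted into a Dedekind cut, with a formula $\varphi(x;y)$ where $y$ ranges over $b$ together with finitely many elements of $I\cup J$. Passing to an elementary extension and using saturation, apply (iii) with $B=$ the set of such parameter tuples built from $I\cup J\cup\{b\}$.

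This yields $\psi(x;z)$ and, for each finite piece, some $c\in B$ with $\psi(a;c)$ isolating $\varphi(a;b')$. By compactness together with indiscernibility of $I+J$ over $b$, we can choose elements $a'$ of $I$ or $J$ close to the cut that satisfy $\psi(x;c)$; this step uses $I+a+J$ indiscernible over the parameters of $c$ other than $b$. These $a'$ must then agree with $a$ on $\varphi(x;b')$. Indiscernibility of $I+J$ over $b$ then forces $a$ to behave like the sequence elements, which is the contradiction. Equivalently, (iii) gives a uniform honest-definition bound, and \cite{extdefinable} derives distality from exactly this.

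I expect the main obstacle to be this last step. The subtlety is that $\psi(x;c)$ is realised by sequence elements only when $c$ avoids a neighbourhood of the cut. My first attempt would be to mirror Chernikov--Simon's proof, shrinking $I,J$ to end segments and using that $c$ involves only finitely many indices.
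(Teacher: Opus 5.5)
Your reduction is sound, and slightly more economical than the paper, which only remarks that Chernikov--Simon's proof adapts to give (i)$\Leftrightarrow$(iii). In your route (i)$\Leftrightarrow$(ii) is cited and (ii)$\Rightarrow$(iii) is immediate. For $|B|=1$, take $\psi(x;z):=\varphi(x;z,\dots,z)$ or its negation, which is allowed since $\psi$ may depend on $a$. So only (iii)$\Rightarrow$(i) needs work.

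\textbf{The gap.} You apply (iii) with the inserted element $a$ as the point and with $b\in B$. Then $c$ may involve $b$. Finding $a'\in I\cup J$ near the cut with $\models\psi(a';c)$ would require knowing how $a$ sits over $b$ together with the sequence, which is exactly the conclusion. Indiscernibility of $I+a+J$ over the other coordinates of $c$ does not help.

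In fact this instance of (iii) carries no information. Take the non-distal theory of an infinite set with equality and put $b:=a$. Then $I+J$ is indiscernible over $b$, and $I+a+J$ is indiscernible but not over $b$. Yet (iii) for the point $a$ and any $B\ni b$ holds via $\psi(x;z):=(x=z)$ and $c:=b$, and no sequence element satisfies $\psi(x;c)$. So no contradiction can come from this choice, whatever is done about $c$ near the cut.

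Your fallback, that (iii) yields a uniform bound so Chernikov--Simon applies, is also unavailable. Uniformising needs NIP and the $(p,q)$-theorem, as in the proof of Theorem~\ref{uniformkSHD}, and NIP is not known at that stage.

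\textbf{The fix.} Swap roles: make $b$ the point, and let $B$ consist only of the coordinates of elements of $I\cup J$.
\begin{enumerate}[(a)]
\item By compactness, assume $I$ and $J$ are dense without endpoints. Fix $\varphi(y;x_1,\dots,x_n)$. Since $I+J$ is indiscernible over $b$, after possibly negating $\varphi$ we may assume $\models\varphi(b;e)$ for every increasing $e$ from $I+J$.
\item Let $\psi(y;z)$ be given by (iii). Take $\bar B$ to be the coordinates of $L>|z|$ increasing $n$-tuples $e^{(1)}<\dots<e^{(L)}$ from $I$ whose spans $[e^{(l)}_1,e^{(l)}_n]$ are pairwise disjoint.
\item The resulting $c$ involves at most $|z|$ sequence elements. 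Hence some $e^{(l)}$ has no element of $c$ in its span: it is disjoint from $c$ and lies in a single cut of $c$. Since $e^{(l)}$ is from $\bar B$ and $\models\varphi(b;e^{(l)})$, we get $\models\forall y\,(\psi(y;c)\to\varphi(y;e^{(l)}))$.
\item Now take any increasing $e''$ from $I+a+J$ containing $a$. By density, choose $c''$ from $I\cup J$ so that $c''e''$ has the same order type as $c\,e^{(l)}$: elements of $c$ below $e^{(l)}$ go into $I$ below $e''$, and those above go into $J$ above $e''$.
\item Indiscernibility of $I+a+J$ gives $\models\forall y\,(\psi(y;c'')\to\varphi(y;e''))$. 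Indiscernibility of $I+J$ over $b$ gives $c''\equiv_b c$, hence $\models\psi(b;c'')$. Therefore $\models\varphi(b;e'')$, so $I+a+J$ is indiscernible over $b$.
\end{enumerate}
This also resolves the obstacle you anticipated about $c$ near the cut. One moves $c$ over $b$ rather than moving $a$ over $c$, which is legitimate precisely because $b$ is the point and $c$ comes from the sequence.
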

\begin{proof}
    That (i) is equivalent to (ii) is, modulo a compactness argument, \cite[Theorem 21]{extdefinable}. The proof can be used almost verbatim to show that (i) is equivalent to (iii).
\end{proof}
Statement (iii) gives a `non-uniform' strong honest definition: one that depends not only on the formula $\varphi(x;y)$ but also on the parameters $a$ and $B$.

We wish to define $k$-strong honest definitions for $(k+1)$-ary formulas, where $k\in\N^+$, and use them to characterise $k$-distality or strong $k$-distality. Walker proves the following result that makes a significant step towards this goal. For a tuple $a=(a_1, ..., a_k)$ and $i\in [k]$, write $a_{\neq i}:=(a_1, ..., a_{i-1}, a_{i+1}, ..., a_k)$.
\patchcmd{\thmhead}{(#3)}{#3}{}{}
\begin{theorem}[{\cite[Theorem 9.18]{walkerthesis}}]\label{walkerSHDnew}
    Let $k\in\N^+$. The following are equivalent.
    \begin{enumerate}[(i)]
        \item The theory $T$ is strongly $k$-distal.
        \item Let $\varphi(x;y)\in L$ with $x=(x_1, ..., x_k)$, $\emptyset\neq B\subseteq M\models T$, and $a=(a_1, ..., a_k)\in M$. Then there is $\psi(x;z)\in L$ such that, for all finite $\bar{B}\subseteq B$, there is $c\in B$ such that, for all $b\in\bar{B}$,
        \begin{equation}\label{eqnwalkerSHDnew}
        a\models \{\psi(x;c)\}\cup\bigcup_{i=1}^k\tp(a_{\neq i}/B)\vdash \varphi(x;b)\leftrightarrow \varphi(a;b).
    \end{equation}
    \end{enumerate}
\end{theorem}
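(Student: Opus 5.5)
We would prove Theorem \ref{walkerSHDnew} by adapting the proof of Chernikov--Simon \cite[Theorem 21]{extdefinable} (as in Theorem \ref{distalequivSHD}(iii)). The role of the base set $B$ for the distal case is now played by $B$ together with the partial types $\tp(a_{\neq i}/B)$, $i\in[k]$.

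\textbf{(ii)$\Rightarrow$(i).} Suppose $I+a+J$ is indiscernible over $b_{\neq m}$ for each $m\in[k]$, and $I+J$ is indiscernible over $b_1\cdots b_k$. We must show $I+a+J$ is indiscernible over $b=b_1\cdots b_k$.

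By a standard reduction it suffices to consider a formula $\varphi(x;b)$, where $x$ has length $n$ and we compare $\varphi$ on increasing tuples that may include $a$. The cleanest route is to reduce to showing that $\tp(a/IJb)$ is the limit (average) type of the cut.

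Here we play the roles of the statement as follows. The base set is $B:=IJ$ (after stretching $I,J$ to be dense, say indexed by $\mathbb{Q}$). The $k$-tuple in the statement is $b_1,\dots,b_k$ viewed as $(a_1,\dots,a_k)$, and the parameter is the element $a$ in the cut. So apply (ii) to $\varphi(b_1,\dots,b_k;y)$, the set $B=IJ\cup\{a\}$, and the $k$-tuple $(b_1,\dots,b_k)$. This yields $\psi$ and, for the finite set consisting of $a$ and finitely many elements near the cut, a parameter $c\in B$ such that $\psi(x;c)$ together with $\tp(b_{\neq i}/B)$ isolates the truth value of $\varphi(x;a)$.

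We then choose $a'$ from $I$ or $J$ close to the cut, avoiding $c$ by density. Indiscernibility of $I+a+J$ over $b_{\neq i}$ gives a $B$-automorphism-like correspondence under which $\tp(b_{\neq i}/\ldots)$ is preserved when $a$ is swapped with $a'$. Indiscernibility of $I+J$ over $b$ ensures that $\psi(b;c)$ holds and that $\varphi(b;a')$ agrees with the cut value. Hence $\varphi(b;a)$ equals $\varphi(b;a')$.

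Making this swap precise, via an automorphism fixing $b_{\neq i}$ and finitely much of $IJ$ while moving $a$ to $a'$, is the delicate bookkeeping step.

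\textbf{(i)$\Rightarrow$(ii).} This direction goes by contradiction plus compactness, following Chernikov--Simon. Suppose that for every $\psi$ some finite $\bar B$ fails.

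Consider the partial type in $x$
\[\Sigma(x)=\bigcup_i\tp(a_{\neq i}/B)\cup\{\neg(\varphi(x;b)\leftrightarrow\varphi(a;b))\}.\]
This must be consistent with every $L(B)$-formula true of $a$ of the relevant form, uniformly over finite $\bar B$. The plan is to build, using this failure for all $\psi$ and compactness, an indiscernible sequence $I+J$ over $a$. We would enumerate candidate parameters, possibly first replacing $B$ by a model or its Skolem hull, and then extract by Ramsey and compactness an element $b$ inserted in the cut such that $I+b+J$ is indiscernible over $a_{\neq i}$ for each $i$ while $\varphi(a;b)$ differs from the limit value. This contradicts strong $k$-distality with the roles $a\leftrightarrow b$ swapped.

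Concretely, I would first try to prove the uniform-in-$\bar B$ version via the characterization that $\tp(a/B)$ is ``compressible relative to the $\tp(a_{\neq i}/B)$''. I would show, as in Simon's proof, that strong $k$-distality implies that for every indiscernible $I$ over $a_{\neq i}$ the average type is determined. I would then use a honest-definition-style argument that invokes the NIP-free part of Chernikov--Simon, namely the construction of a finite-data $\psi$ from failure of compressibility.

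The main obstacle is this direction. Producing a single $\psi$ that works for all finite $\bar B$ with parameter $c\in B$ requires a uniformity argument, a $(p,q)$-type or compactness-over-$\bar B$ step, and this is exactly where the extra types $\tp(a_{\neq i}/B)$ must be carried through the compactness without spoiling consistency.
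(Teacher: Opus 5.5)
There is a genuine gap in each direction. The paper does not reprove this theorem (it is quoted from Walker), but its proof of Theorem~\ref{nonuniformkSHD} shows how (i)$\Rightarrow$(ii) goes.

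\textbf{(ii)$\Rightarrow$(i).} Your set-up cannot work, and the problem is not bookkeeping. You put the inserted element $a$ into $B$ and $\bar B$, so nothing stops $c$ from containing $a$. Then your swap $a\mapsto a'$ breaks: $\psi(b;c)$ is a formula in the whole tuple $b$ that mentions $a$, and indiscernibility of $I+J$ over $b$ says nothing about it once $a$ is moved. This is fatal, as the theory of an infinite set with $k=1$ and $b_1:=a$ shows. There $I+a+J$ is indiscernible and $I+J$ is indiscernible over $a$. The formula $x=z$ with $c:=a$ witnesses (ii) at $B=I\cup J\cup\{a\}$ for every $\varphi$ and every finite $\bar B$. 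Yet $I+a+J$ is not indiscernible over $a$.

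So (ii) must be applied at a $B$ that omits $a$, and the transport must run the other way, from an element of $\bar B$ to $a$. Proceed as follows.
\begin{enumerate}[(1)]
\item Make $I,J$ dense and absorb finitely many parameters from $I\cup J$ into the $b_i$ after shrinking $I,J$ towards the cut. Take $B:=I\cup J$.
\item Let $\bar B$ consist of more than $|z|$ elements $e_1<\dots<e_m$ of $I$. By indiscernibility of $I+J$ over $b$, $\varphi^\epsilon(b;e_j)$ holds for all $j$, with a fixed $\epsilon$.
\item Choose $e_j$ not occurring in $c$. By compactness, take a finite part $\chi_i(x_{\neq i};d)$ of $\tp(b_{\neq i}/B)$ with $\psi(x;c)\wedge\bigwedge_i\chi_i(x_{\neq i};d)\vdash\varphi^\epsilon(x;e_j)$.
\item Take an order-preserving map on the finitely many sequence elements involved, fixing those below $e_j$, sending $e_j\mapsto a$, and sending those above into $J$. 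By indiscernibility of $I+a+J$ it extends to an automorphism $\tau$.
\item $\psi(b;\tau c)$ holds by indiscernibility of $I+J$ over $b$, because $c$ avoids $e_j$. Each $\chi_i(b_{\neq i};\tau d)$ holds by indiscernibility of $I+a+J$ over $b_{\neq i}$, even if $d$ involves $e_j$. Hence $\varphi^\epsilon(b;a)$.
\end{enumerate}
Each hypothesis controls exactly one piece of the isolating data.

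\textbf{(i)$\Rightarrow$(ii).} Here your outline contains no actual argument, and the obstacle you name is not the real one. In (ii), $\psi$ may depend on $B$ and $a$, so no $(p,q)$-type step is needed. In the paper that enters only for the uniform Theorem~\ref{uniformkSHD}, under NIP.

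The mechanism visible in the proof of Theorem~\ref{nonuniformkSHD} is as follows, with $a$ now the $k$-tuple as in the statement, $p:=\tp(a/\monster)$ and $p_{\neq i}:=\tp(a_{\neq i}/\monster)$.
\begin{enumerate}[(1)]
\item Pass to an $|M|^+$-saturated pair $(M',B')\succcurlyeq(M,B)$. By Lemma~\ref{walkerSHDlemma}, $p|_{B'}\cup\bigcup_i(p_{\neq i}\otimes q)|_{B'}\vdash(p\otimes q)|_{B'}$ for every $q\in S^{\fs}_y(\monster;B)$.
\item The space $S^{\fs}_{a,y}(B';B)$ is compact by Lemma~\ref{fslemma}. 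This yields finitely many isolating conjunctions with parameters in $B'$, which are coded into single formulas.
\item For each finite $\bar B$, the existence of suitable parameters is a first-order statement in the pair. So it descends from $B'$ to $B$ by elementarity.
\item Finally, take $\psi:=\bigwedge_j\psi_{k+1}(x;z^{(j)}_{k+1})$. The formulas $\psi_i(x_{\neq i},b,c^{(j)}_i)$ are absorbed into $\tp(a_{\neq i}/B)$.
\end{enumerate}
The saturated pair is what turns ``for every finite $\bar B$ some $c\in B$'' into a single compactness problem. That is the idea your sketch is missing.
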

\patchcmd{\thmhead}{#3}{(#3)}{}{}
When $k=1$, (\ref{eqnwalkerSHDnew}) simplifies to
\[a\models \psi(x;c)\vdash \varphi(x;b)\leftrightarrow \varphi(a;b);\]
that is, $\psi$ is precisely a `non-uniform' strong honest definition for $(\varphi,a,B)$, as in statement (iii) of Theorem \ref{distalequivSHD}. It may therefore be tempting to define, for arbitrary $k$, $\psi$ to be a `non-uniform' $k$-strong honest definition for $(\varphi,a,B)$.

This turns out to be unfruitful. A $k$-strong honest definition for $\varphi$ should refine $\varphi$-types, but in (\ref{eqnwalkerSHDnew}), this is achieved not by $\psi$ alone but by $\{\psi(x;c)\}\cup\bigcup_{i=1}^k\tp(a_{\neq i}/B)$. Now, we would like our $k$-strong honest definition to be a formula rather than a type. By compactness, we know that $\{\psi(x;c)\}\cup\bigcup_{i=1}^k\tp(a_{\neq i}/B)$ can be replaced by a finite subset in (\ref{eqnwalkerSHDnew}). That is, for all finite $\bar{B}$, there are $\psi_i(x_{\neq i}; c_i)\in \tp(a_{\neq i}/B)$ such that (\ref{eqnwalkerSHDnew}) can be replaced by
\[a\models \psi(x;c)\wedge\bigwedge_{i=1}^k \psi_i(x_{\neq i}; c_i)\vdash \varphi(x;b) \leftrightarrow \varphi(a;b).\]

It appears as if we have our `non-uniform' $k$-strong honest definition $\psi\wedge \bigwedge_{i=1}^k \psi_i$, but the reader must not forget that the choice of the $\psi_i$ here depends on $\bar{B}\subseteq B$. To remove this dependence, we need to do some work. Our first goal is the following theorem.

\begin{theorem}\label{nonuniformkSHD}
    Let $k\in \N^+$. The following are equivalent.
    \begin{enumerate}[(i)]
        \item The theory $T$ is strongly $k$-distal.
        \item Let $\varphi(x_1, ..., x_k; y)\in L$, $\emptyset\neq B\subseteq M\models T$, and $a=(a_1, ..., a_k)\in M$. Write $x:=(x_1, ..., x_k)$. Then there are $\psi_i(x_{\neq i},y,z_i)\in L$ for $i\in [k]$, $\psi_{k+1}(x; z_{k+1})\in L$, and $N\in\N$, such that for all finite $\bar{B}\subseteq B$, there are $c^{(j)}_1, ..., c^{(j)}_{k+1}\in B$ for $j\in [N]$, such that for all $b\in \bar{B}$, there is $j\in [N]$ with
            \[a\models \psi_{k+1}(x,c^{(j)}_{k+1})\wedge\bigwedge_{i=1}^k \psi_i(x_{\neq i},b,c^{(j)}_i)\vdash \varphi(x;b)\leftrightarrow \varphi(a;b).\]
        \item Let $\varphi(x_1, ..., x_k; y)\in L$, $\emptyset\neq B\subseteq M\models T$, and $a=(a_1, ..., a_k)\in M$. Write $x:=(x_1, ..., x_k)$. Let $(M',B')\succcurlyeq (M,B)$ be $|M|^+$-saturated. Then there are $\psi_i(x_{\neq i},y,z_i)\in L$ for $i\in [k]$, $\psi_{k+1}(x; z_{k+1})\in L$, $N\in\N$, and $c^{(j)}_1, ..., c^{(j)}_{k+1}\in B'$ for $j\in [N]$, such that for all $b\in B$, there is $j\in [N]$ with
            \[a\models \psi_{k+1}(x,c^{(j)}_{k+1})\wedge\bigwedge_{i=1}^k \psi_i(x_{\neq i},b,c^{(j)}_i)\vdash \varphi(x;b)\leftrightarrow \varphi(a;b).\]
    \end{enumerate}
\end{theorem}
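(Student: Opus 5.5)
Our plan is to prove (i)$\Rightarrow$(iii)$\Rightarrow$(ii)$\Rightarrow$(i), using Theorem \ref{walkerSHDnew} as the bridge to strong $k$-distality.

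For (ii)$\Rightarrow$(i), we verify condition (ii) of Theorem \ref{walkerSHDnew}. Given $\varphi$, $B$, $a$, take the $\psi_i$ and $N$ from (ii) and set
\[\psi(x;z):=\bigvee_{j=1}^N\psi_{k+1}(x;z^{(j)}_{k+1}),\]
or rather a suitable conjunction encoding all $N$ choices. The point is that for each $b$ and each $j$, the formula $\psi_i(x_{\neq i},b,c^{(j)}_i)$ is a formula over $B$ in $x_{\neq i}$. So whenever $a$ satisfies it, it lies in $\tp(a_{\neq i}/B)$. For finite $\bar B$, take the witnesses $c^{(j)}$ and let $\psi(x;c):=\bigwedge_j \psi_{k+1}(x;c^{(j)}_{k+1})$. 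Now fix $b\in\bar B$ and let $j$ be as in (ii). Then $\{\psi(x;c)\}\cup\bigcup_i\tp(a_{\neq i}/B)$ implies $\psi_{k+1}(x,c^{(j)}_{k+1})\wedge\bigwedge_i\psi_i(x_{\neq i},b,c^{(j)}_i)$, which in turn decides $\varphi(x;b)$ correctly. The parameter $c$ is a single tuple from $B$ of length $N|z_{k+1}|$, so this gives (\ref{eqnwalkerSHDnew}).

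For (iii)$\Rightarrow$(ii), we use a standard compactness argument in the pair $(M',B')$, with a predicate $P$ for $B$. Suppose the $\psi$'s and $N$ from (iii) fail (ii) for some finite $\bar B$. That failure is expressible in the language with $P$ and constants for $a$ and $\bar B$: no $c$'s in $P$ work for all $b\in\bar B$. Since $(M',B')\succcurlyeq(M,B)$, the statement ``there are $c^{(j)}\in P$ handling all $b\in\bar B$'' is a first-order sentence over $a\bar B$. It holds in $M'$, witnessed by the $c$'s from (iii), so it holds in $(M,B)$.

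The main work is (i)$\Rightarrow$(iii). By Theorem \ref{walkerSHDnew} applied to $B'$ (or to $B$ and then transferred), we obtain $\psi(x;z)$ such that for every finite $\bar B\subseteq B$ there is $c\in B$ with
\[\{\psi(x;c)\}\cup\bigcup_i\tp(a_{\neq i}/B)\vdash\varphi(x;b)\leftrightarrow\varphi(a;b)\]
for all $b\in\bar B$.

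First, by saturation of $(M',B')$ and compactness over the finite subsets of $B$, we get a single $c\in B'$ working for all $b\in B$. To do this, realise the type in $z$ saying $P(z)$ together with, for each $b\in B$ and each $\chi(x_{\neq i})$-type fragment, the required implications. Some care is needed here: Walker's statement uses types over $B$, and we want to pass to $B'$ and apply it with $B'$ in place of $B$.

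Second, for each $b\in B$, compactness gives finitely many formulas $\chi^b_i(x_{\neq i};d^b_i)\in\tp(a_{\neq i}/B')$ such that $\psi(x;c)\wedge\bigwedge_i\chi^b_i$ decides $\varphi(x;b)$. The obstacle is uniformity: the $\chi^b_i$ depend on $b$ and are not of the form $\psi_i(x_{\neq i},b,z_i)$ with finitely many parameter choices. To fix this, we work in $\tp(a_{\neq i}/B)$ and note that the relevant information is really about how $a_{\neq i}$ interacts with $b$. So we aim to replace $\chi^b_i$ by a formula in $x_{\neq i},y$ instantiated at $y=b$.

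Concretely, we consider the set $\Sigma_i(x_{\neq i};y)$ of all formulas $\theta(x_{\neq i},y,z)$ with parameters $e\in B'$ such that $\models\theta(a_{\neq i},b,e)$. We then apply compactness in the variable $y$ ranging over $P$: the partial type in $y$ expressing ``$P(y)$ and no finite choice of these formulas decides $\varphi(x;y)$'' must be inconsistent. Otherwise, by $|M|^+$-saturation, it is realised by some $b'\in B'$. This contradicts the statement above applied to $B'$, which requires the initial application of Theorem \ref{walkerSHDnew} to $B'$ together with elementarity.

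Inconsistency then yields finitely many formula shapes $\psi_i$ and parameter tuples $c^{(1)},\dots,c^{(N)}\in B'$ covering every $b$. Here we code finitely many formulas into one by using extra parameters to select disjuncts, which is allowed since $|B|\ge 1$ and equalities among parameters can serve as selectors.

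I expect this last uniformisation in $y$ to be the main difficulty, particularly making sure that $a_{\neq i}$ is tied to $b$ only through formulas in $\tp(a_{\neq i}b/B')$.
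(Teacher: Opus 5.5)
\paragraph{Verdict.} Your arguments for (ii)$\Rightarrow$(i) (using the conjunction $\bigwedge_j\psi_{k+1}(x;z^{(j)}_{k+1})$) and for (iii)$\Rightarrow$(ii) are correct and match the paper. The gap is in (i)$\Rightarrow$(iii). Theorem \ref{walkerSHDnew} is too weak a starting point, and both of your saturation steps fail.

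\paragraph{Your first step.} For fixed $b$, the condition ``$\{\psi(x;c)\}\cup\bigcup_i\tp(a_{\neq i}/B)\vdash\varphi(x;b)\leftrightarrow\varphi(a;b)$'' on $c$ is an infinite disjunction over finite fragments of the types $\tp(a_{\neq i}/B)$. It is not a partial type in $z$. So knowing it is satisfiable for each finite $\bar B$ does not let you realise it for all $b\in B$ at once.

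\paragraph{Your second step.} The partial type in $y$ either has parameters ranging over all of $B'$, or parameters from a small set. In the first case, $|M|^+$-saturation does not realise it. In the second case, a realisation $b'\in B'$ gives no contradiction. Theorem \ref{walkerSHDnew}, applied in $(M',B')$ to $\bar B=\{b'\}$, only yields some $c'$ depending on $b'$ (not your fixed $c$). Its witnessing formulas come from $\tp(a_{\neq i}/B')$, and their parameters need not lie in your small set. What you would need at $b'$ is essentially (iii) with $B'$ in place of $B$, so the argument is circular. A minor point: the equality-selector coding needs two distinct elements, i.e.\ $|B|\ge 2$; the case $|B|\le 1$ is trivial.

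\paragraph{The missing idea.} You need to control the limiting behaviour of elements of $B$ through global types finitely satisfiable over $B$. This uses Walker's Lemma \ref{walkerSHDlemma} rather than Theorem \ref{walkerSHDnew}: for every $q\in S^\fs_y(\monster;B)$,
\[p|_{B'}\cup\bigcup_i(p_{\neq i}\otimes q)|_{B'}\vdash(p\otimes q)|_{B'}, \qquad p=\tp(a/\monster).\]
The formulas in $(p_{\neq i}\otimes q)|_{B'}$ are exactly of the form $\psi_i(x_{\neq i},y,c_i)$ with $c_i\in B'$, holding of $a_{\neq i}$ and $y$ jointly. This is precisely the ``tie to $b$'' you were looking for.

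The paper's argument then runs as follows.
\begin{enumerate}[(1)]
\item For each $q$, compactness gives a single formula $\psi^q$ that implies $\varphi^{\varepsilon_q}$.
\item The sets $[\psi^q]$ cover $S^\fs_{a,y}(B';B)$. This set is closed in $S_{x,y}(B')$ by Lemma \ref{fslemma}(ii), hence compact, so finitely many $q\in Q$ suffice.
\item Every $b\in B$ is covered, because $\tp(b/\monster)$ is finitely satisfiable over $B$ and $p\otimes\tp(b/\monster)=\tp(a,b/\monster)$ by Lemma \ref{fslemma}(i).
\item Coding the $\psi^q$, $q\in Q$, into single formulas via selectors gives (iii) with $N=|Q|$.
\end{enumerate}
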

Note that, in (ii) and (iii) of Theorem \ref{nonuniformkSHD}, $(\psi_1, ..., \psi_{k+1})$ acts as a `non-uniform' $k$-strong honest definition for $(\varphi,a,B)$; recall that `non-uniformity' refers to its dependence on $a$ and $B$. After proving Theorem \ref{nonuniformkSHD}, we will bootstrap it to generate `uniform' $k$-strong honest definitions for $\varphi$ --- ones that depend only on $\varphi$ and not $a$ or $B$ --- under an extra NIP assumption. These will be defined precisely in Definition \ref{defnkSHDlegit}.

\begin{remark}
    In (ii) and (iii) of Theorem \ref{nonuniformkSHD}, the awkward parameter $N\in\N$ arises from a coding process, when we construct $\psi_1, ..., \psi_k$ each as a code for multiple formulas. We are not able to obtain a statement without such $N$. However, the reader is invited to check that the formula $\psi_{k+1}(x, c^{(j)}_{k+1})$ can be replaced by the formula $\bigwedge_{j=1}^N\psi_{k+1}(x, c^{(j)}_{k+1})$, and so the $N$ parameters $c^{(1)}_{k+1}, ..., c^{(N)}_{k+1}$ can be combined into one parameter $c_{k+1}:=(c^{(1)}_{k+1}, ..., c^{(N)}_{k+1})$; we thank Mira Tartarotti for pointing this out to us. We will nonetheless persist with our original formulation to avoid talking about $c_{k+1}$ separately from $c^{(j)}_1, ..., c^{(j)}_k$.
    
    We will make further comments on the parameter $N$ after defining `uniform' $k$-strong honest definitions (which also make reference to such $N$) in Definition \ref{defnkSHDlegit}.
\end{remark}

Towards proving Theorem \ref{nonuniformkSHD}, we appeal to the following result of Walker. For a type $q\in S(A)$ and $A_0\subseteq A$, write $q|_{A_0}:= q\cap L(A_0)$. Recall that a type $q$ is \textit{finitely satisfiable} over a set $B\subseteq U$ if, for all $\varphi(x)\in q$, there is $b\in B$ such that $\models \varphi(b)$. 

\patchcmd{\thmhead}{(#3)}{#3}{}{}
\begin{lemma}[{\cite[Lemma 9.12]{walkerthesis}}]\label{walkerSHDlemma}
    Suppose $T$ is strongly $k$-distal. Let $B\subseteq M\models T$ and $a=(a_1, ..., a_k)\in M$. Let $p:=\tp(a/\monster)$, and for all $i\in [k]$, let $p_{\neq i}:=\tp(a_{\neq i}/\monster)$. Let $(M',B')\succcurlyeq (M,B)$ be $|M|^+$-saturated. Then, for all $q\in S(\monster)$ finitely satisfiable over $B$,
    \[p|_{B'}\cup \bigcup_{i=1}^k (p_{\neq i}\otimes q)|_{B'}\vdash (p\otimes q)|_{B'}.\]
\end{lemma}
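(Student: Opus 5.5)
The plan is to argue by contradiction, adapting the Chernikov--Simon argument for the $k=1$ case in \cite{extdefinable} so that Definition \ref{defnkstronglydistal} does the work. Write $p\otimes q=\tp(a,d/\monster)$, where $d\models q$ and $a\models p|_{\monster d}$, in variables $(x,y)$. Suppose the implication fails. Then there are $(a^*,d^*)$ and a formula $\varphi(x,y;e)\in(p\otimes q)|_{B'}$ with $e\in B'$ such that:
\begin{itemize}
\item $a^*\models p|_{B'}$;
\item $(a^*_{\neq i},d^*)\models (p_{\neq i}\otimes q)|_{B'}$ for every $i\in[k]$;
\item $\models\neg\varphi(a^*,d^*;e)$.
\end{itemize}
The goal is to produce sequences $I,J$ to which strong $k$-distality applies, with $d^*$ playing the role of the inserted element and $a^*_1,\dots,a^*_k$ (together with $e$) playing the role of $b_1,\dots,b_k$.

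\textbf{Step 1: build the sequences.} Since $q$ is finitely satisfiable over $B$, it is a coheir over $B$. Take a Morley sequence $(d_n)_{n\in\mathbb{Z}}$ of $q$ over a large set containing $a,B',e$. This sequence is indiscernible over $a e$ and over $B'$, and every $d_n$ realises $q$ restricted to the earlier parameters. Finite satisfiability in $B$ and the $|M|^+$-saturation of the pair $(M',B')$ should then let us realise such a sequence, at least up to the finite fragment relevant to a single formula, by tuples from $B'$.

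Split the sequence as $I=(d_n)_{n<0}$ and $J=(d_n)_{n>0}$. Because $I+J\subseteq B'$ and $a^*\equiv_{B'}a$, the sequence $I+J$ is indiscernible over $a^*_1\cdots a^*_k e$.

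\textbf{Step 2: verify the $k$ partial hypotheses.} For each $m\in[k]$ we need $I+d^*+J$ to be indiscernible over $a^*_{\neq m}e$. The reason is that $(a^*_{\neq m},d^*)$ has the same type over $B'$ as $(a_{\neq m},d)$ in $p_{\neq m}\otimes q$. A genuine realisation $d$ of $q$ inserted in the middle of the coheir sequence keeps the sequence indiscernible over $a_{\neq m}e$. This is the standard fact that $q$-Morley sequences with a further $q$-realisation inserted remain Morley sequences, provided the sequence was built over $d$ as well. The relevant formulas all have parameters in $B'$, so they transfer to $(a^*_{\neq m},d^*)$.

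\textbf{Step 3: conclude.} Definition \ref{defnkstronglydistal} now yields that $I+d^*+J$ is indiscernible over $a^*e$. Hence
\[
\models\varphi(a^*,d^*;e)\iff\ \models\varphi(a^*,d_1;e)\iff\ \models\varphi(a,d_1;e),
\]
where the last equivalence uses $a^*\equiv_{B'}a$ and $d_1\in B'$. Finally, $\models\varphi(a,d_1;e)$ holds because $(a,d_1)\models (p\otimes q)|_{B'}$, by the choice of the order in which the Morley sequence was built. This contradicts $\models\neg\varphi(a^*,d^*;e)$.

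\textbf{Main obstacle.} The delicate point is making Steps 1 and 2 compatible. We must realise the coheir Morley sequence inside $B'$, so that the types over $B'$ of $(a^*,\cdot)$ and $(a,\cdot)$ can be compared. At the same time we must order the construction so that inserting $d^*$ preserves indiscernibility over each $a^*_{\neq m}$; this needs the sequence to realise $q$ over $a_{\neq m}$ and over the other terms in the correct order.

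The first approach to try is to build the sequence in the monster model over $a\,d$ and then pull it into $B'$ by saturation of the pair. The pullback can be done one finite fragment at a time, with compactness handling the full sequence. The ordering convention for $\otimes$ may need to be swapped, which would mean putting $d$ on the left of the sequence instead.
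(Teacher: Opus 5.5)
Your overall architecture is right: argue by contradiction, realise a coheir sequence inside $B'$ using the saturation of the pair, insert $d^*$ with $b_i:=a^*_ie$, and transfer between $a$ and $a^*$ via $a^*\equiv_{B'}a$. However, Step 2 is false as you have set things up. The ``main obstacle'' you flag is a genuine gap, and none of your suggested remedies closes it.

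\textbf{Why Step 2 fails.} Restricting your hypotheses to formulas not involving $x$ gives $\tp(d^*/B')=q|_{B'}$. So $d^*$ realises $q$ over every finite part of $B'$, and in particular over $J$. Relative to the Morley ordering, $d^*$ therefore comes \emph{after} $J$, and nothing makes $I+d^*+J$ indiscernible. The fact you quote needs $J$ to realise $q$ over $d^*$, which is impossible when $J\subseteq B'$.

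For instance, take DLO with $B=M=\mathbb{Q}$ and $q$ the global coheir of $0^+$. Morley sequences of $q$ are increasing sequences of infinitesimals, and every $d^*\models q|_{B'}$ lies above all of $I+J$.

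\textbf{Why your remedies do not help.}
\begin{itemize}
\item You cannot pull a sequence built over $d$ into $B'$. Saturation of $(M',B')$ only realises types over at most $|M|$ parameters from $M'$, and $d\notin M'$; in any case $d^*$ would still realise $q$ over the result.
\item A Morley sequence over a set containing $B'$ cannot lie inside $B'$ unless $q$ is realised.
\item Reversing the order puts $d^*$ at an endpoint of the sequence, where Definition~\ref{defnkstronglydistal} says nothing.
\end{itemize}

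\textbf{The repair: only $I$ has to lie in $B'$.}

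\emph{Construction.} Take $I=(d_n)_{n<\omega}\subseteq B'$ with $d_n\models q|_{Me\,d_{<n}}$. This is possible by $|M|^+$-saturation of the pair and finite satisfiability of $q$ in $B$, since the parameter set has size $|M|$. Let $J$ be a Morley sequence of $q$ over $B'a^*d^*$ in $\monster$, \emph{not} inside $B'$.

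\emph{$I+J$ is indiscernible over $a^*e$.} The type $q$ is $B'$-invariant, $a^*\equiv_{B'}a$, and $I,e\subseteq B'$. Hence each $d_n$ also realises $q$ over $Ba^*e\,d_{<n}$. So $I+J$ is a Morley sequence of $q$ over $Ba^*e$, hence indiscernible over $a^*e$. No transfer from $a$ to $a^*$ is needed for $J$, because it was built over $a^*$ directly.

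\emph{$I+d^*+J$ is indiscernible over $a^*_{\neq m}e$.} The hypothesis $(a^*_{\neq m},d^*)\models(p_{\neq m}\otimes q)|_{B'}$, combined with Lemma~\ref{fslemma}(i) and the same invariance, says exactly that $d^*\models q|_{B'a^*_{\neq m}}$. This contains $q|_{Ba^*_{\neq m}eI}$. So $I+d^*+J$ is a Morley sequence over $Ba^*_{\neq m}e$ for every $m\in[k]$.

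\emph{Conclusion.} Strong $k$-distality now makes $I+d^*+J$ indiscernible over $a^*e$. Finish as in your Step 3, but compare $d^*$ with $d_0\in I$ rather than with an element of $J$:
\[
\varphi(a^*,d^*;e)\leftrightarrow\varphi(a^*,d_0;e)\leftrightarrow\varphi(a,d_0;e).
\]
The second equivalence holds because $d_0,e\in B'$. Finally, $\varphi(a,d_0;e)$ is true because $d_0\models q|_{Me}$ and $\varphi(a,y;e)\in q$, which contradicts $\neg\varphi(a^*,d^*;e)$.

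The paper itself does not prove this lemma; it quotes it from Walker's thesis, so the comparison here is against the argument that actually works rather than against a written proof.
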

For a tuple of variables $y$ and $B\subseteq B'\subseteq \monster$, where $B$ is small but $B'$ is not necessarily small, write $S^\fs_y(B';B):=\{p(y)\in S_y(B'): p\text{ is finitely satisfiable over }B\}$. We require the following lemmas about finitely satisfiable types. 
\begin{lemma}\label{fsextension}
    Let $y$ be a tuple of variables. Let $p(y)$ be a partial type that is finitely satisfiable over a small set $B\subseteq \monster$. Then $p$ extends to a (complete) global type that is finitely satisfiable over $B$. Thus, if $B\subseteq B'$ for some not necessarily small set $B'\subseteq \monster$, then
    \[S^\fs_y(B';B)=\{q|_{B'}: q\in S^\fs_y(\monster;B)\}.\]
\end{lemma}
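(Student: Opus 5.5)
The plan is the standard ultrafilter/compactness argument: the partial type $p(y)$ generates a filter on the collection of $B$-definable sets, and we extend this to an ultrafilter all of whose members meet $B$. Concretely, let $\mathcal{F}$ be the family of subsets $X\subseteq B^{|y|}$ of the form $X=\chi(\monster)\cap B^{|y|}$, where $\chi(y)$ is a finite conjunction of formulas in $p$. Since $p$ is finitely satisfiable over $B$, every member of $\mathcal{F}$ is nonempty. Moreover, $\mathcal{F}$ is closed under finite intersections, because conjunctions of formulas of $p$ are again such conjunctions. Hence $\mathcal{F}$ has the finite intersection property, and by Zorn's lemma it extends to an ultrafilter $\mathcal{U}$ on $B^{|y|}$.

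Next, define the global type
\[
q:=\{\theta(y)\in L(\monster): \theta(\monster)\cap B^{|y|}\in\mathcal{U}\}.
\]
This is consistent and complete. Completeness holds because for each $\theta$, exactly one of $\theta(\monster)\cap B^{|y|}$ and its complement in $B^{|y|}$ lies in $\mathcal{U}$. Consistency holds because any finite subset of $q$ has its intersection with $B^{|y|}$ in $\mathcal{U}$, hence that intersection is nonempty and so the finite subset is realised in $B$. The same observation shows that $q$ is finitely satisfiable over $B$: every $\theta\in q$ is realised by some element of $B$. Finally, $p\subseteq q$, since $\mathcal{F}\subseteq\mathcal{U}$. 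This proves the first assertion.

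For the displayed equality, both inclusions are short.
\begin{enumerate}[(i)]
    \item ($\supseteq$) If $q\in S^\fs_y(\monster;B)$, then its restriction $q|_{B'}$ is a complete type over $B'$. Each formula of $q|_{B'}$ is a formula of $q$, so it is realised in $B$, and therefore $q|_{B'}\in S^\fs_y(B';B)$.
    \item ($\subseteq$) Given $r\in S^\fs_y(B';B)$, view $r$ as a partial type over $\monster$ that is finitely satisfiable over $B$, and apply the first part to extend it to a global $q\in S^\fs_y(\monster;B)$. Since $r$ is complete over $B'$ and $r\subseteq q|_{B'}$, we get $q|_{B'}=r$.
\end{enumerate}

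The only subtlety lies in (ii). When $B'$ is not small, the formulas of $r$ use parameters from $B'\subseteq\monster$, but they are still $L(\monster)$-formulas, so the ultrafilter construction applies verbatim; smallness of $B'$ is never used. I expect no real obstacle, as the argument is routine.
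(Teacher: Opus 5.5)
Your proposal is correct and is the paper's argument: you extend the sets $\chi(\monster)\cap B^{|y|}$ coming from $p$ to an ultrafilter on $B^{|y|}$ and take the associated global type, exactly as the paper does following Simon's guide. You also prove both inclusions of the displayed equality, which the paper leaves implicit, and that part is fine too.
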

\begin{proof}
    We follow the argument in \cite[Section 2.2]{NIPguide}. Since $p$ is finitely satisfiable over $B$, we can extend the set $\{\varphi(B): \varphi(y)\in p\}$ to an ultrafilter $\mathcal{F}$ on $B^y$. Then $p$ extends to the global type $\{\varphi(y)\in L(\monster): \varphi(B)\in\mathcal{F}\}$, which is finitely satisfiable over $B$.
\end{proof}
\begin{lemma}\label{fslemma}
    Let $x,y$ be tuples of variables. Let $a\in \monster^x$ and $B\subseteq B'\subseteq \monster$, where $B$ is small but $B'$ is not necessarily small. Let $p(x)=\tp(a/\monster)$. Then the following hold.
    \begin{enumerate}[(i)]
        \item If $q\in S_y(\monster)$ is $B$-invariant, then $p\otimes q=q\otimes p =\{\varphi(x,y):\varphi(a,y)\in q\}$.
        \item The set $S^\fs_{a,y}(B';B):=\{(p\otimes q)|_{B'}: q\in S^\fs_y(\monster; B)\}$ is closed in $S_{x,y}(B')$.
    \end{enumerate}
\end{lemma}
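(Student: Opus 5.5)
For (i), I will use the standard fact that a global $B$-invariant type $q(y)$ satisfies $p\otimes q=q\otimes p$ whenever $p$ is realised in $\monster$. Here $p\otimes q$ means: for $\varphi(x,y)\in L(C)$, with $C\subseteq\monster$ small, take $b\models q|_{C}$ and put $\varphi(x,y)\in p\otimes q$ iff $\varphi(x,b)\in p$, i.e.\ iff $\models\varphi(a,b)$. Since $a\in\monster$, we may take $C\ni a$; then $b\models q|_{Ca}$ gives $\models \varphi(a,b)$ iff $\varphi(a,y)\in q$. This already yields $p\otimes q=\{\varphi(x,y):\varphi(a,y)\in q\}$.

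For $q\otimes p$, the recipe is symmetric. We realise $a'\models p|_{C}$ (we may take $a'=a$, since $p=\tp(a/\monster)$). We then ask whether $\varphi(a',y)\in q$. This gives the same set, so both products coincide regardless of the invariance convention. Invariance is only needed to make $\otimes$ well defined.

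For (ii), I will identify the map $q\mapsto (p\otimes q)|_{B'}$ via (i). By (i), $(p\otimes q)|_{B'}=\{\varphi(x,y)\in L(B'):\varphi(a,y)\in q\}$. This factors through restriction as $q\mapsto q|_{B'a}\mapsto \{\varphi(x,y):\varphi(a,y)\in q|_{B'a}\}$. The second map $S_y(B'a)\to S_{x,y}(B')$ is continuous, because the preimage of the basic clopen $[\varphi(x,y)]$ is $[\varphi(a,y)]$. Finite satisfiability over $B$ implies $B$-invariance, so (i) applies to each $q\in S^\fs_y(\monster;B)$.

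The set $S^\fs_y(\monster;B)$ is closed in $S_y(\monster)$, because it is the intersection over $\psi(y)\in L(\monster)$ with $\psi(B)=\emptyset$ of the closed sets $[\neg\psi]$. Hence it is compact. Its image under the continuous composite $S_y(\monster)\to S_y(B'a)\to S_{x,y}(B')$ is therefore compact in the Hausdorff space $S_{x,y}(B')$, and so closed.

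One subtlety is that $B'$ may not be small, so $S_y(B'a)$ is a type space over a large set. Compactness and continuity of restriction still hold for arbitrary parameter sets, and the domain $S^\fs_y(\monster;B)$ is compact as a closed subset of $S_y(\monster)$, so no problem arises. I expect the only real care point to be this bookkeeping: the parameters $a$ lie in $\monster$ but possibly outside $B'$, which is why I restrict $q$ to $B'a$ rather than to $B'$.
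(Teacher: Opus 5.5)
Your proof is correct. Part (i) follows the paper's computation: take $C\ni a$, realise $b\models q|_C$, and read off $\models\varphi(a,b)$ iff $\varphi(a,y)\in q$. You also spell out the $q\otimes p$ case, which the paper dismisses with ``it suffices''.

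For (ii) you take a genuinely different route.

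\textbf{The paper's route.} It describes $S^\fs_{a,y}(B';B)$ intrinsically as the set of $r\in S_{x,y}(B')$ that contain $\neg\varphi(x,y)$ for every $\varphi(x,y)\in L(B')$ such that $\{\varphi(a,y)\}$ is not finitely satisfiable over $B$. This set is visibly an intersection of clopen sets. The nontrivial inclusion is that every such $r$ has the form $(p\otimes q)|_{B'}$. For this the paper needs Lemma \ref{fsextension}, which extends the partial type $r(a,y)$ to a global type finitely satisfiable over $B$.

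\textbf{Your route.} You observe three things:
\begin{enumerate}[(a)]
\item $S^\fs_y(\monster;B)$ is closed in the compact space $S_y(\monster)$;
\item the map $q\mapsto\{\varphi(x,y)\in L(B'):\varphi(a,y)\in q\}$ is continuous;
\item a compact image in a Hausdorff space is closed.
\end{enumerate}
Surjectivity onto $S^\fs_{a,y}(B';B)$ is then automatic, since that set is defined as an image. So no extension lemma is needed.

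\textbf{Comparison.} Your argument is shorter. It also yields compactness of $S^\fs_{a,y}(B';B)$ directly, which is exactly what the proof of Theorem \ref{nonuniformkSHD} uses. In exchange, the paper's argument gives an explicit description of which formulas cut out $S^\fs_{a,y}(B';B)$ inside $S_{x,y}(B')$.
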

\begin{proof}
    (i) It suffices to show that $p\otimes q=\{\varphi(x,y):\varphi(a,y)\in q\}$. Let $\varphi(x,y)\in L(C)$, where $\{a\}\cup B\subseteq C$, and let $b\models q|_C$. Then
    \[\varphi(x,y)\in p\otimes q \Leftrightarrow \varphi(x,b)\in p\Leftrightarrow \monster\models \varphi(a,b)\Leftrightarrow \varphi(a,y)\in q.\]
    
    (ii) Without loss of generality, suppose $B\neq\emptyset$. It suffices to show that for $r\in S_{x,y}(B')$, $r\in S^\fs_{a,y}(B';B)$ if and only if whenever $\varphi(x,y)\in L(B')$ is such that $\{\varphi(a,y)\}$ is not finitely satisfiable over $B$, then $\neg \varphi(x,y)\in r$.

    Suppose $r\in S^\fs_{a,y}(B';B)$, so we have that $r=(p\otimes q)|_{B'}$ for some $q\in S^\fs_y(\monster; B)$. If $\varphi(x,y)\in L(B')$ is such that $\varphi(x,y)\in r$, then $\varphi(a,y)\in q$ and so $\{\varphi(a,y)\}$ is finitely satisfiable over $B$. Conversely, suppose whenever $\varphi(x,y)\in L(B')$ is such that $\{\varphi(a,y)\}$ is not finitely satisfiable over $B$, then $\neg \varphi(x,y)\in r$. Then $r(a,y)$ is finitely satisfiable over $B$, so extends to some $q\in S^\fs_y(\monster; B)$ by Lemma \ref{fsextension}. But then $r=(p\otimes q)|_{B'}$: these are complete types such that if $\varphi(x,y)\in r$, then $\varphi(a,y)\in r(a,y)\subseteq q$, and so $\varphi(x,y)\in p\otimes q$.
\end{proof}
\patchcmd{\thmhead}{#3}{(#3)}{}{}
We are now ready to prove Theorem \ref{nonuniformkSHD}.
\begin{proof}[Proof of Theorem \ref{nonuniformkSHD}]
    Firstly, we argue that (i) implies (iii). Suppose $T$ is strongly $k$-distal. Let $\varphi(x_1, ..., x_k; y)\in L$, $B\subseteq M\models T$, and $a=(a_1, ..., a_k)\in M$. If $|B|\leq 1$ then (iii) holds trivially, so assume $|B|\geq 2$. Write $x:=(x_1, ..., x_k)$. Let $(M',B')\succcurlyeq (M,B)$ be $|M|^+$-saturated. Let $p:=\tp(a/\monster)$, and for all $i\in [k]$, let $p_{\neq i}:=\tp(a_{\neq i}/\monster)$. 

    Let $q\in S^\fs_y(\monster;B)$. By Lemma \ref{walkerSHDlemma}, there is $\varepsilon_q\in\{0,1\}$ such that
    \[r^q:=p|_{B'}\cup\bigcup_{i=1}^k (p_{\neq i}\otimes q)|_{B'}\vdash \varphi^{\varepsilon_q}(x;y).\]
    By compactness, there are $\psi^q_{k+1}(x,c^q_{k+1})\in p|_{B'}$ and $\psi^q_i(x_{\neq i},y,c^q_i)\in (p_{\neq i}\otimes q)|_{B'}$ for $i\in [k]$ such that
    \[\psi^q:=\psi^q_{k+1}(x,c^q_{k+1})\wedge\bigwedge_{i=1}^k \psi^q_i(x_{\neq i},y,c^q_i)\vdash \varphi^{\varepsilon_q}(x;y).\]
    
    Now, $\{[\psi^q]: q\in S^\fs_y(\monster;B)\}$ is an open cover for $S^\fs_{a,y}(B';B)$. By Lemma \ref{fslemma}, $S^\fs_{a,y}(B';B)$ is a closed, hence compact, subset of $S_{x,y}(B')$, so the open cover above has a finite subcover $\{[\psi^q]: q\in Q\}$.

    For all $b\in B$, we have $\tp(b/\monster)\in S^\fs_y(\monster;B)$, so there is $q(b)\in Q$ such that $\psi^{q(b)}\in p\otimes \tp(b/\monster)=\tp(a,b/\monster)$, whence
    \[a\models \psi^{q(b)}_{k+1}(x,c^{q(b)}_{k+1})\wedge\bigwedge_{i=1}^k \psi^{q(b)}_i(x_{\neq i},b,c^{q(b)}_i)\vdash \varphi^{\varepsilon_{q(b)}}(x; b);\]
    in particular, $\models \varphi^{\varepsilon_{q(b)}}(a; b)$, and so
    \[a\models \psi^{q(b)}_{k+1}(x,c^{q(b)}_{k+1})\wedge\bigwedge_{i=1}^k \psi^{q(b)}_i(x_{\neq i},b,c^{q(b)}_i)\vdash \varphi(x; b)\leftrightarrow \varphi(a;b).\]
    For all $i\in [k+1]$, we can code $(\psi^q_i: q\in Q)$ into a single formula as follows: for all $b \in B$,
    \begin{align*}
        a\models \bigvee_{q\in Q}\left(\psi^q_{k+1}(x,c^q_{k+1})\wedge u_{k+1}^q=v_{k+1}^q\right)\wedge \bigwedge_{i=1}^k \bigvee_{q\in Q}&\left(\psi^q_i(x_{\neq i},b,c^q_i)\wedge u_i^q=v_i^q\right)\\
        &\;\;\;\;\;\;\;\;\;\;\;\;\;\;\;\vdash \varphi(x; b)\leftrightarrow \varphi(a;b),
    \end{align*}
    for any $u^q_1, ..., u^q_{k+1}, v^q_1, ..., v^q_{k+1}\in B'$ such that for all $i\in [k+1]$, $u^q_i=v^q_i$ if and only if $q=q(b)$; such $u^q, v^q$ exist since $|B|\geq 2$. Therefore, (iii) holds.

    Next, we argue that (iii) implies (ii). Our argument expands that in \cite[Corollary 9]{extdefinable}. Suppose (iii) holds. Let $\varphi(x_1, ..., x_k;y)\in L$, $\emptyset\neq B\subseteq M\models T$, and $a\in M$. Let $(M',B')\succcurlyeq (M,B)$ be any $|M|^+$-saturated elementary extension, and let $\psi_1, ..., \psi_{k+1}$ and $N$ be given by (iii). Then, for all finite $\bar{B}\subseteq B$, $(M', B')$ satisfies the first-order formula saying that there are $c^{(j)}_1, ..., c^{(j)}_{k+1}\in B'$ for $j\in [N]$ satisfying the conclusion of (ii). Since $(M',B')\succcurlyeq (M,B)$ is an elementary extension, $(M, B)$ satisfies the same formula with $B'$ replaced by $B$ throughout, so (ii) holds.

    Finally, we argue that (ii) implies (i). By Theorem \ref{walkerSHDnew}, it suffices to show that (ii) implies statement (ii) of Theorem \ref{walkerSHDnew}. Let $\varphi(x_1, ..., x_k; y)\in L$, $\emptyset\neq B\subseteq M\models T$, and $a=(a_1, ..., a_k)\in M$. Let $\psi_{k+1}(x;z_{k+1})$ and $N$ be given by (ii), and let $\psi(x; z_{k+1}^{(1)}, ..., z_{k+1}^{(N)}):=\bigwedge_{j=1}^N \psi_{k+1}(x;z_{k+1}^{(j)})$. Then, for all finite $\bar{B}\subseteq B$, there are $c_{k+1}^{(j)}\in B$ for $j\in [N]$ such that, for all $b\in\bar{B}$,
    \[a\models \{\psi(x;c_{k+1}^{(1)}, ..., c_{k+1}^{(N)})\}\cup\bigcup_{i=1}^k\tp(a_{\neq i}/B)\vdash \varphi(x;b)\leftrightarrow \varphi(a;b)\]
    as required.
\end{proof}

Our next goal is to bootstrap Theorem \ref{nonuniformkSHD} to generate `uniform' $k$-strong honest definitions. It is now clear what these should look like.

\begin{defn}\label{defnkSHDlegit}
    Let $\varphi(x_1, ..., x_k;y)\in L$; write $x:=(x_1, ..., x_k)$. Let $N\in\N$. A $(k+1)$-tuple of $L$-formulas $(\psi_i(x_{\neq i},y,z_i):i\in [k])^\frown (\psi_{k+1}(x,z_{k+1}))$ is a \textit{$k$-strong honest definition} for $\varphi$ of \textit{degree} $N$ if the following holds.

    Let $B\subseteq M\models T$ with $2\leq |B|<\infty$ and $a=(a_1, ..., a_k)\in M$. Then there are $c^{(j)}_1, ..., c^{(j)}_{k+1}\in B$ for $j\in [N]$ such that for all $b\in B$, there is $j\in [N]$ with
            \[a\models \psi_{k+1}(x,c^{(j)}_{k+1})\wedge\bigwedge_{i=1}^k \psi_i(x_{\neq i},b,c^{(j)}_i)\vdash \varphi(x;b)\leftrightarrow \varphi(a;b).\]
\end{defn}
\begin{remark}\label{remarkkSHD}
    By compactness, $(\psi_i(x_{\neq i},y,z_i):i\in [k])^\frown(\psi_{k+1}(x,z_{k+1}))$ is a \textit{$k$-strong honest definition} for $\varphi$ of \textit{degree} $N$ if and only if the following holds.

    Let $B\subseteq M\models T$ with $|B|\geq 2$ and $a\in M$. Let $(M',B')\succcurlyeq (M,B)$ be $|M|^+$-saturated. Then there are $c^{(j)}_1, ..., c^{(j)}_{k+1}\in B'$ for $j\in [N]$ such that for all $b\in B$, there is $j\in [N]$ with
            \[a\models \psi_{k+1}(x,c^{(j)}_{k+1})\wedge\bigwedge_{i=1}^k \psi_i(x_{\neq i},b,c^{(j)}_i)\vdash \varphi(x;b)\leftrightarrow \varphi(a;b).\]
\end{remark}
Note that a 1-strong honest definition of degree 1 is a strong honest definition, and if $\psi(x,z)$ is a 1-strong honest definition of degree $N>1$, then $\bigwedge_{j=1}^N \psi(x,z_j)$ is a (1-)strong honest definition (of degree 1). Hence, when defining strong honest definitions, we did not need to make reference to degrees. Sadly, when $k\geq 2$, this trick does not work for $k$-strong honest definitions, and we do not know whether the reference to degrees can be eliminated.
\begin{problem}\label{qndegrees}
    If $\varphi(x_1, ..., x_k; y)\in L$ has a $k$-strong honest definition, does it have a $k$-strong honest definition of degree 1?
\end{problem}
As we shall see, the reference to degrees does not seem to affect the efficacy of $k$-strong honest definitions, it merely makes the proofs more awkward.

The main result of this section is as follows.
\begin{theorem}\label{uniformkSHD}
    Let $T$ be NIP and let $k\in \N^+$. The following are equivalent.
    \begin{enumerate}[(i)]
        \item The theory $T$ is strongly $k$-distal.
        \item Every $\varphi(x_1, ..., x_k;y)\in L$ has a $k$-strong honest definition.
    \end{enumerate}
\end{theorem}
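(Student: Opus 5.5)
The direction (ii)$\Rightarrow$(i) is the easy one and does not use NIP. Suppose every $\varphi$ has a $k$-strong honest definition $(\psi_1,\dots,\psi_{k+1})$ of some degree $N$. Fix $B\subseteq M\models T$ with $|B|\geq 2$ (the case $|B|\le 1$ is trivial) and $a\in M$. Applying Definition \ref{defnkSHDlegit} to each finite $\bar B\subseteq B$ with $|\bar B|\ge 2$ yields exactly statement (ii) of Theorem \ref{nonuniformkSHD}, with the same $\psi_i$ and $N$. That theorem then gives strong $k$-distality.

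For (i)$\Rightarrow$(ii) I would imitate the Chernikov--Simon proof that distality yields uniform strong honest definitions. There, non-uniform honest definitions are made uniform using the $(p,q)$-theorem and the uniform definability of types over finite sets in NIP theories. Fix $\varphi(x_1,\dots,x_k;y)$ and argue by contradiction. Suppose no tuple of formulas is a $k$-strong honest definition of any degree. Then for every finite family $\Psi$ of candidate tuples and every $N$ there are finite $B$ and $a$ witnessing failure.

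The key step is to produce a finite family of candidates that works for all pairs $(a,B)$ simultaneously. First, by compactness in the pair language, (i) together with Theorem \ref{nonuniformkSHD}(iii) should give the following: for each $(M,B,a)$ there is some tuple $\bar\psi$ from a countable list, and some $N$, working for $(a,B)$. Next, consider a sufficiently saturated model of the theory of all pairs $(M,B)$ with a named tuple $a$, and look at the type asserting that $\bar\psi$ fails for $(a,B)$ with degree $N$. For each single $\bar\psi$ and $N$ this is a first-order (or at least type-definable) condition. If every finite conjunction of these conditions were consistent, compactness would give $(M^*,B^*,a^*)$ for which every candidate fails. This contradicts the non-uniform statement applied to $(a^*,B^*)$, since Theorem \ref{nonuniformkSHD}(ii) gives a candidate working for all finite $\bar B\subseteq B^*$, in particular a working candidate for the whole of $B^*$ up to finite pieces. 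Hence finitely many candidates $\bar\psi^1,\dots,\bar\psi^m$ and a common bound $N$ cover all cases.

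I expect the main obstacle here: the failure condition quantifies over $b\in B$ and over the choice of parameters in $B$. This is first-order only in the pair language with $B$ a predicate, and the non-uniform result lets $\bar\psi$ depend on $B$ only up to finite subsets. So one must check that "working for every finite $\bar B\subseteq B^*$" transfers to the expanded compactness argument. This is where I expect NIP to be needed. Following \cite{extdefinable}, I would use uniform honest definitions for NIP formulas (every $\varphi$-type over a finite $B$ is defined by an instance of a single formula with parameters in $B$) to make the parameters $c^{(j)}_i$ range over a uniformly definable family. This should convert "for all finite subsets" into a single first-order sentence of the pair theory. If this route stalls, the fallback is a $(p,q)$/VC-duality argument that bounds the number of parameter tuples needed to cover $B$, which absorbs any blow-up into a larger $N$.

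The final step combines the finitely many candidates $\bar\psi^1,\dots,\bar\psi^m$ into one tuple. I would use the same coding trick as in the proof of Theorem \ref{nonuniformkSHD}: form disjunctions $\bigvee_{l}(\psi^l_i\wedge u^l_i=v^l_i)$ with selector parameters from $B$, which is possible because $|B|\ge2$. The resulting tuple is a single $k$-strong honest definition of degree $mN$.
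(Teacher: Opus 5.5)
Your direction (ii)$\Rightarrow$(i) is fine. For (i)$\Rightarrow$(ii) there is a genuine gap, and it is the one you flag yourself without closing it.

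The failure condition you can write in the pair language is ``no parameters in $B$ work for all $b\in B$''. The compactness model $(M^*,B^*,a^*)$ then has $B^*$ infinite. Theorem \ref{nonuniformkSHD}(ii) only gives, for each finite $\bar B\subseteq B^*$ separately, parameters in $B^*$ that work on $\bar B$. Part (iii) only gives parameters in an elementary extension $B'$. So neither yields a contradiction. The other natural condition, ``fails on some finite $\bar B$'', is an infinite disjunction over $|\bar B|$ and is not first-order. Your first repair, using honest definitions for NIP formulas to make ``for all finite $\bar B$'' a single sentence, is not substantiated and is not what is needed. Your fallback names the $(p,q)$-theorem but does not say which family has the $(p,q)$-property, or why.

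The missing idea is to let compactness handle only subsets of a bounded size tied to each candidate, and then let the $(p,q)$-theorem do the rest.

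\begin{enumerate}[(1)]
\item For each $\Psi$ and $N$, let $\theta_{\Psi,N}(z^{(1)},\dots,z^{(N)};x,y)$ express that for some $j$, $a\models\psi_{k+1}(x,c^{(j)}_{k+1})\wedge\bigwedge_i\psi_i(x_{\neq i},b,c^{(j)}_i)\vdash\varphi(x;b)\leftrightarrow\varphi(a;b)$. Fix in advance $m_{\Psi,N}:=|y|\cdot\VC^*(\theta_{\Psi,N})$.
\item The statement ``$(\Psi,N)$ fails on some $\bar B\subseteq B$ of size at most $m_{\Psi,N}$'' is first-order. The theory asserting it for every $(\Psi,N)$ is refuted by Theorem \ref{nonuniformkSHD}(ii).
\item Compactness therefore yields finitely many candidates $(\Psi^{(h)},N_h)$ such that, for each $(a,B)$, some $h$ handles every $\bar B$ of size at most $m_{\Psi^{(h)},N_h}$. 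This is Lemma \ref{uniformlemma}.
\item Take finite $B$ and $a$, and that $h$. Put $q:=\VC^*(\theta_{\Psi^{(h)},N_h})$ and $e:=|z_1|+\cdots+|z_{k+1}|$. The family $\{\theta_{\Psi^{(h)},N_h}(B^{eN_h};a,b):b\in B^y\}$ has dual VC-dimension at most $q$.
\item It also has the $(q,q)$-property, because any $q$ tuples $b$ lie in a subset of $B$ of size at most $m_{\Psi^{(h)},N_h}$.
\item Fact \ref{pqthm} then gives a transversal of size $K(q,q)$, which is a working choice of parameters of degree $K(q,q)N_h$.
\item Only after this should you code the finitely many resulting tuples into one, as you describe. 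If you code first, the coded candidate's own $\VC^*$ is not the one that Lemma \ref{uniformlemma} controlled.
\end{enumerate}

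NIP enters only through the finiteness of $\VC^*(\theta_{\Psi,N})$ needed for the $(p,q)$-theorem. Honest definitions for NIP formulas play no role.
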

Our proof bootstraps the `non-uniform' Theorem \ref{nonuniformkSHD}, following the strategy in \cite[Theorem 21]{extdefinable}. The ingredient that necessitates NIP is the following fact.
\patchcmd{\thmhead}{(#3)}{#3}{}{}
\begin{fact}[{\cite[Theorem 4; $(p,q)$-theorem]{pqthm}}]\label{pqthm}
    For all $p\geq q\in\N^+$, there is $K=K(p,q)\in\N^+$ such that the following holds.
    
    Let $\mathcal{F}\subseteq \mathcal{P}(X)$ be a finite family with $\VC^*(\mathcal{F})\leq q$, and suppose $\mathcal{F}$ has the \textit{$(p,q)$-property}: if $\mathcal{F}_0\subseteq \mathcal{F}$ has size $p$, then there is a $q$-element subset of $\mathcal{F}_0$ with non-empty intersection. Then there is $Y\subseteq X$ of size at most $K(p,q)$ such that $F\cap Y\neq\emptyset$ for all $F\in\mathcal{F}$.
\end{fact}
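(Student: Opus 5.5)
The plan is to follow the Alon–Kleitman scheme as adapted by Matoušek to set systems of bounded dual VC-dimension. There are three ingredients: a fractional Helly theorem, LP duality to obtain a bounded fractional transversal, and an $\varepsilon$-net theorem to turn that into a bounded integral transversal.

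\emph{Step 1 (fractional Helly).} We will show there is $\beta=\beta(p,q)>0$ with the following property: in any finite multiset family $\mathcal{G}$ satisfying the $(p,q)$-property, at least $\beta\binom{|\mathcal{G}|}{q}$ of the $q$-subfamilies have non-empty intersection. By the $(p,q)$-property, every $p$-subfamily contains an intersecting $q$-subfamily. A double count over $p$-subsets then shows that at least a $\binom{p}{q}^{-1}$ fraction of the $q$-subsets intersect. We must take a little care with repeated members of the multiset.

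We then bootstrap this to a Helly-type statement of the following form: if an $\alpha$-fraction of $q$-tuples intersect, then some point of $X$ lies in at least $\beta'|\mathcal{G}|$ members of $\mathcal{G}$. This is Matoušek's fractional Helly theorem for families with polynomially bounded dual shatter function. Sauer–Shelah applied to $\VC^*(\mathcal{F})\leq q$ bounds the number of atoms of the Venn diagram of any $m$ sets by $O(m^{q})$. The proof then counts intersecting $q$-tuples through the atoms they hit, and uses a hypergraph Erdős–Stone/supersaturation argument (a complete $q$-partite $q$-graph forces a common point). I expect this step to be the main obstacle. The exponent bookkeeping is delicate: one needs the shatter function to be strictly smaller than $m^{q}$ in the relevant sense, so I would first check whether the hypothesis should read $\VC^*<q$, or whether the Venn-diagram count with $q$-tuples still suffices.

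\emph{Step 2 (fractional transversal).} Consider the LP that minimises $\sum_{x}w(x)$ subject to $w\geq 0$ and $\sum_{x\in F}w(x)\geq 1$ for all $F\in\mathcal{F}$. Its dual maximises $\sum_F u(F)$ subject to $\sum_{F\ni x}u(F)\leq 1$. Take an optimal rational dual solution and clear denominators, replicating each $F$ in proportion to $u(F)$ to form a multiset family $\mathcal{G}$. This $\mathcal{G}$ still has the $(p,q)$-property, up to the multiplicity issue, which we will handle by the standard trick of requiring $p$ large relative to $q$ or by counting distinct members. Step 1 gives a point in $\beta'|\mathcal{G}|$ members. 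This forces $\sum_F u(F)\leq 1/\beta'$, so by LP duality there is a fractional transversal $w$ of total weight $\tau^*\leq 1/\beta'$.

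\emph{Step 3 (rounding).} Normalise $w$ to a probability measure $\mu$ on $X$. Every $F\in\mathcal{F}$ then has $\mu(F)\geq 1/\tau^*=:\varepsilon$. The primal VC-dimension of $\mathcal{F}$ is at most $2^{q+1}$, since it is bounded in terms of $\VC^*$. The $\varepsilon$-net theorem of Haussler–Welzl therefore gives $Y\subseteq X$ with $|Y|=O_q(\varepsilon^{-1}\log\varepsilon^{-1})$ meeting every set of measure at least $\varepsilon$. Hence $Y$ meets every $F\in\mathcal{F}$, and $K(p,q)$ is this bound, which depends only on $p$ and $q$.
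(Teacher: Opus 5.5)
Your route is the one behind the cited result: Matou\v{s}ek's fractional Helly theorem for families of bounded dual shatter function, fed into the Alon--Kleitman LP-duality argument and finished with an $\varepsilon$-net. The paper itself only cites this.

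\textbf{The statement as written is false.} The doubt you raise in Step 1 is decisive, not a matter of bookkeeping: with $\VC^*(\mathcal{F})\leq q$ no argument can work. Take $n$ hyperplanes in general position in $\R^q$ (for $q=1$, $n$ distinct singletons; for $q=2$, $n$ lines with no two parallel and no three concurrent).
\begin{itemize}
\item Any $q$ of them meet, so the $(p,q)$-property holds for every $p\geq q$.
\item By general position, every $q$ of them are shattered by points, while no point lies on $q+1$ of them. Hence $\VC^*(\mathcal{F})=q$.
\item No point lies on more than $q$ of them, so every transversal has size at least $n/q$, which is unbounded.
\end{itemize}
Matou\v{s}ek's hypothesis is $\pi^*_{\mathcal{F}}(m)=o(m^q)$. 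Sauer--Shelah gives this uniformly from $\VC^*(\mathcal{F})\leq q-1$, namely $\pi^*_{\mathcal{F}}(m)\leq\sum_{i<q}\binom{m}{i}$. From $\VC^*(\mathcal{F})\leq q$ it gives only $O(m^q)$, which the example shows is genuinely insufficient. So you should prove the Fact under $\VC^*(\mathcal{F})<q$. This costs nothing downstream: in the proof of Theorem \ref{uniformkSHD} one may take $m_{\Psi,N}:=d(\VC^*(\theta_{\Psi,N})+1)$.

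\textbf{Closing Step 1 under the corrected hypothesis.} No supersaturation is needed; a Clarkson--Shor count suffices.
\begin{itemize}
\item Let $G_1,\dots,G_n$ be the indexed family, and suppose every point lies in fewer than $\beta n$ of the $G_i$.
\item For each intersecting $q$-set $T\subseteq[n]$, fix $x_T\in\bigcap_{i\in T}G_i$ and put $D_T:=\{i: x_T\in G_i\}$.
\item For a uniformly random $w$-set $W\subseteq[n]$, call $T$ exposed if $T\subseteq W$ and $W\cap D_T=T$.
\item An exposed $T$ is exactly the trace of $x_T$ on $W$. Hence at most $\pi^*_{\mathcal{F}}(w)=O_q(w^{q-1})$ sets are exposed.
\item For $n\geq 2w$, each intersecting $T$ is exposed with probability at least $\binom{w}{q}\binom{n}{q}^{-1}(1-2\beta)^{w}$.
\end{itemize}
Taking $w\approx 1/(4\beta)$ shows that at most $O_q(\beta)\binom{n}{q}$ of the $q$-sets intersect. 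Equivalently, $\beta'=\Omega_q(\alpha)$.

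\textbf{Two further repairs.}
\begin{itemize}
\item The Fact silently requires every member of $\mathcal{F}$ to be nonempty: $\mathcal{F}=\{\emptyset\}$ has the $(p,q)$-property vacuously for $p\geq 2$. Nonemptiness is exactly what your multiset step needs, and it holds in the paper's application. After replicating sets, among any $(p-1)(q-1)+1$ members of $\mathcal{G}$, either some set occurs $q$ times (and these copies meet because it is nonempty), or there are $p$ distinct sets. So $\mathcal{G}$ has the $((p-1)(q-1)+1,q)$-property, and your double count applies with $p$ replaced by $(p-1)(q-1)+1$.
\item Restrict the LP to one point from each Venn atom of $\mathcal{F}$, so that it is finite.
\end{itemize}
With these changes, Steps 2 and 3 are correct and all constants depend only on $p$ and $q$.
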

\patchcmd{\thmhead}{#3}{(#3)}{}{}
That $K$ only depends on $p$ and $q$ in Fact \ref{pqthm} is not stated explicitly in \cite[Theorem 4]{pqthm}; see \cite[Remark 7]{extdefinable} for an argument to this end.

We prove a simple compactness lemma.
\begin{lemma}\label{uniformlemma}
    Let $T$ be strongly $k$-distal and $\varphi(x_1, ..., x_k; y)\in L$. Write $x:=(x_1, ..., x_k)$. For all $(k+1)$-tuples of $L$-formulas $\Psi:=(\psi_i(x_{\neq i},y,z_i):i\in [k])^\frown (\psi_{k+1}(x,z_{k+1}))$ and $N\in\N$, fix $m_{\Psi,N}\in\N$. Then there are $H, N_1, ..., N_H\in\N$ and $(k+1)$-tuples of $L$-formulas $\Psi^{(h)}:=(\psi^{(h)}_i(x_{\neq i},y,z^{(h)}_i):i\in [k])^\frown (\psi^{(h)}_{k+1}(x,z^{(h)}_{k+1}))$ for $h\in [H]$ such that the following holds.
    
    Let $B\subseteq M\models T$ with $|B|\geq 2$, and let $a\in M$. Then there is $h\in [H]$ such that, for all $\bar{B}\subseteq B$ of size at most $m_{\Psi^{(h)},N_h}$, there are $c^{(j)}_1, ..., c^{(j)}_{k+1}\in B$ for $j\in [N_h]$ such that for all $b\in \bar{B}$, there is $j\in [N_h]$ with
    \[a\models \psi^{(h)}_{k+1}(x,c^{(j)}_{k+1})\wedge \bigwedge_{i=1}^k \psi^{(h)}_i(x_{\neq i},b,c^{(j)}_i)\vdash \varphi(x;b)\leftrightarrow \varphi(a;b).\]
\end{lemma}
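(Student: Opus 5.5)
This is a compactness argument built on the non-uniform Theorem \ref{nonuniformkSHD}. We argue by contradiction: suppose that for every finite list $(\Psi^{(h)},N_h)_{h\in[H]}$ the conclusion fails. Enumerate all pairs $(\Psi,N)$, where $\Psi$ is a $(k+1)$-tuple of $L$-formulas in the right variable shapes and $N\in\N$. Work in the expanded language $L_P:=L\cup\{P\}$ with a new unary predicate $P$, and new constants $a$. For each pair $(\Psi,N)$, let $\chi_{\Psi,N}$ be the $L_P(a)$-sentence expressing the following: there are $b_1,\dots,b_m\in P$, with $m=m_{\Psi,N}$, such that for every choice of $c^{(j)}_1,\dots,c^{(j)}_{k+1}\in P$ ($j\in[N]$), some $b_l$ has no $j\in[N]$ with
\[a\models \psi_{k+1}(x,c^{(j)}_{k+1})\wedge\bigwedge_{i=1}^k\psi_i(x_{\neq i},b_l,c^{(j)}_i)\vdash\varphi(x;b_l)\leftrightarrow\varphi(a;b_l).\]
The entailment ``$\vdash$'' here is a first-order condition on the parameters: it says $\forall x\,(\text{conjunction}\rightarrow(\varphi(x;b_l)\leftrightarrow\varphi(a;b_l)))$. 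So $\chi_{\Psi,N}$ is indeed a single first-order sentence. This is the point to check carefully, and it is why the lemma quantifies over $\bar B$ of bounded size $m_{\Psi,N}$ rather than over all finite $\bar B$.

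Now consider the theory
\[T\cup\{|P|\geq 2\}\cup\{\chi_{\Psi,N}:\text{all }(\Psi,N)\},\]
where $T$ is read in the $L$-reduct. The failure of the lemma for every finite list $(\Psi^{(h)},N_h)_{h\in[H]}$ gives, for that list, a pair $(M,B)$ with $|B|\geq2$ and a tuple $a$ satisfying every $\chi_{\Psi^{(h)},N_h}$ with $h\in[H]$. Hence the theory is finitely satisfiable. By compactness it has a model $(M,B,a)$ with $M\models T$ and $|B|\geq2$.

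Apply Theorem \ref{nonuniformkSHD}(ii) to $\varphi$, $B$ and $a$. This uses strong $k$-distality and $B\neq\emptyset$. It yields $\Psi$ and $N$ such that for all finite $\bar B\subseteq B$ suitable parameters $c^{(j)}_i\in B$ exist. In particular this holds for every $\bar B=\{b_1,\dots,b_{m_{\Psi,N}}\}$, which contradicts $(M,B,a)\models\chi_{\Psi,N}$. The contradiction gives finitely many pairs $(\Psi^{(h)},N_h)$ with $h\in[H]$. For every $(M,B,a)$ with $|B|\geq 2$, at least one $\neg\chi_{\Psi^{(h)},N_h}$ holds, and that $h$ is exactly the one required in the statement.

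The only real obstacle is making the quantification uniform, so that the bad property is first-order. This is handled by bounding $|\bar B|$ by $m_{\Psi,N}$ and encoding $B$ with the predicate $P$. The rest is routine compactness.
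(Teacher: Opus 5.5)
Your proposal is correct and follows essentially the same route as the paper. The paper likewise adjoins a unary predicate $P$ and constants for $a$, and writes down the theory saying that for every $(\Psi,N)$ some $\bar{B}\subseteq P$ of size at most $m_{\Psi,N}$ admits no suitable parameters. It notes via Theorem \ref{nonuniformkSHD} that this theory is inconsistent, and extracts the finitely many $(\Psi^{(h)},N_h)$ by compactness. One small fix: when you write the condition as a first-order formula, include the conjunct that $a$ itself satisfies $\psi_{k+1}(x,c^{(j)}_{k+1})\wedge\bigwedge_{i=1}^k\psi_i(x_{\neq i},b_l,c^{(j)}_i)$, not just the $\forall x$ clause (compare $\theta_{\Psi,N}$ in the proof of Theorem \ref{uniformkSHD}).
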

\begin{proof}
    Let $P$ be a new unary predicate. Let $T'$ be the theory in the language $L':=L\cup \{P,a\}$ saying that if $(M,B,a)\models T'$, then $M\models T$, $|B|\geq 2$, and for every $(k+1)$-tuple of $L$-formulas $\Psi:=(\psi_i(x_{\neq i},y,z_i):i\in [k])^\frown (\psi_{k+1}(x,z_{k+1}))$ and $N\in\N$, there is $\bar{B}\subseteq B$ of size at most $m_{\Psi,N}$, for which there are no $c^{(j)}_1, ..., c^{(j)}_{k+1}\in B$ for $j\in [N]$ such that for all $b\in\bar{B}$, there is $j\in [N]$ with
     \[a\models \psi_{k+1}(x,c^{(j)}_{k+1})\wedge \bigwedge_{i=1}^k \psi_i(x_{\neq i},b,c^{(j)}_i)\vdash \varphi(x;b)\leftrightarrow \varphi(a;b).\]
    By Theorem \ref{nonuniformkSHD}, $T'$ is inconsistent. By compactness, $T'$ is finitely inconsistent, giving the existence of $N_1, ..., N_H$ and $\Psi^{(1)}, ..., \Psi^{(H)}$ as claimed.
\end{proof}
We now prove Theorem \ref{uniformkSHD}.
\begin{proof}[Proof of Theorem \ref{uniformkSHD}]
    That (ii) implies (i) follows immediately from Theorem \ref{nonuniformkSHD}, so we prove that (i) implies (ii).
    
    Suppose $T$ is strongly $k$-distal. Let $\varphi(x_1, ..., x_k;y)\in L$, $x:=(x_1, ..., x_k)$, and $d:=|y|$. For each $(k+1)$-tuple of $L$-formulas $\Psi:=(\psi_i(x_{\neq i},y,z_i):i\in [k])^\frown (\psi_{k+1}(x,z_{k+1}))$  and $N\in\N$, let $\theta_{\Psi,N}(z^{(1)}, ..., z^{(N)};x,y)$ be the following formula, where for all $j\in [N]$ we have $z^{(j)}=(z^{(j)}_1, ..., z^{(j)}_{k+1})$:
    \begin{align*}
        \bigvee_{j=1}^N\left(\psi_{k+1}(x,z^{(j)}_{k+1})\wedge\bigwedge_{i=1}^k  \psi_i(x_{\neq i},y,z^{(j)}_i)\phantom{\left(\left(\bigvee_1^k\right)\right)}\right.\;\;\;\;\;\;\;\;\;\;\;\;\;\;\;\;\;\;\;\;\;\;\;\;\;\;\;\;\;\;\;\;\;\;\;\;\;\;\;\;\;\;\;\;\;\;\\
    \left.\phantom{a}\wedge\forall x'\left(\left(\psi_{k+1}(x',z^{(j)}_{k+1})\wedge\bigwedge_{i=1}^k  \psi_i(x'_{\neq i},y,z^{(j)}_i)\right)\to(\varphi(x';y)\leftrightarrow \varphi(x;y))\right)\right);
    \end{align*}
    let $m_{\Psi,N}:=d\cdot \VC^*(\theta_{\Psi,N})\in\N$. That is, $\theta_{\Psi,N}(c^{(1)}, ..., c^{(N)};a,b)$ holds if and only if there is $j\in [N]$ such that
    \[a\models \psi_{k+1}(x,c^{(j)}_{k+1})\wedge \bigwedge_{i=1}^k \psi_i(x_{\neq i},b,c^{(j)}_i)\vdash \varphi(x;b)\leftrightarrow \varphi(a;b).\]
    
    By standard coding tricks, we may apply Lemma \ref{uniformlemma} under the assumption that $H=1$. (Otherwise, the following proof produces $(\psi^{(h)}_1, ..., \psi^{(h)}_{k+1})_{h\in [H]}$ such that, for all $a$ and $B$, there is $h\in [H]$ such that $(\psi^{(h)}_1, ..., \psi^{(h)}_{k+1})$ works; then, as in the proof of Theorem \ref{nonuniformkSHD}, for each $i\in [k+1]$ we code the formulas $(\psi^{(h)}_i: h\in [H])$ into a single formula $\psi_i$ such that, for all $a$ and $B$, $(\psi_1, ..., \psi_{k+1})$ works.)

    Applying Lemma \ref{uniformlemma} with $H=1$, we obtain the $(k+1)$-tuple $\Psi^{(1)}:=(\psi^{(1)}_i(x_{\neq i},y,z^{(1)}_i):i\in [k])^\frown (\psi^{(1)}_{k+1}(x,z^{(1)}_{k+1}))$ --- from which we shall henceforth drop the superscripts --- and $N:=N_1\in\N$. Let $e:=|z_1|+\cdots+|z_{k+1}|$. Let $B\subseteq M\models T$ with $2\leq |B|<\infty$, and let $a\in M^x$. For $b\in B^y$, $\theta_{\Psi,N}(B^{eN};a,b)$ is the set
    \begin{align*}
        &\left\lbrace (c^{(1)}, ..., c^{(N)})\in B^{eN}:\phantom{\bigwedge_{i=1}^k}\right.\\
        &\;\;\;\;\;\left.a\models \psi_{k+1}(x,c^{(j)}_{k+1})\wedge \bigwedge_{i=1}^k \psi_i(x_{\neq i},b,c^{(j)}_i)\vdash \varphi(x;b)\leftrightarrow \varphi(a;b)\text{ for some }j\in [N]\right\rbrace.
    \end{align*}
    
    Observe that the family $\mathcal{F}:=\{\theta_{\Psi,N}(B^{eN};a,b): b\in B^y\}\subseteq \mathcal{P}(B^{eN})$ has the $(m_{\Psi,N}/d, m_{\Psi,N}/d)$-property, that is, any subset of $\mathcal{F}$ of size $m_{\Psi,N}/d$ has non-empty intersection. Indeed, given $b_1, ..., b_{m_{\Psi,N}/d}\in B^y$, there is $\bar{B}\subseteq B$ of size at most $m_{\Psi,N}$ such that $b_1, ..., b_{m_{\Psi,N}/d}\in \bar{B}^y$, and our choice of $\Psi$ (given by Lemma \ref{uniformlemma}) is precisely such that there is $(c^{(1)}, ..., c^{(N)})\in \bigcap_{i\in [m_{\Psi,N}/d]} \theta_{\Psi,N}(B^{eN};a,b_i)$.
    
    By standard VC-(co)dimension manipulations, $\VC^*(\mathcal{F})\leq \VC^*(\theta_{\Psi,N})=m_{\Psi,N}/d$. By the $(p,q$)-theorem (Fact \ref{pqthm}), there is $Y\subseteq B^{eN}$ of size at most $K=K(m_{\Psi,N}/d, m_{\Psi,N}/d)\in\N$, such that $F\cap Y\neq \emptyset$ for all $F\in\mathcal{F}$. That is, there are $c^{(1)}, ..., c^{(KN)}\in B^e$ such that for all $b\in B^y$, there is $j\in [KN]$ with
    \[a\models \psi_{k+1}(x,c^{(j)}_{k+1})\wedge \bigwedge_{i=1}^k \psi_i(x_{\neq i},b,c^{(j)}_i)\vdash \varphi(x;b)\leftrightarrow \varphi(a;b).\]

    Since the above holds for all $B\subseteq M\models T$ with $2\leq |B|<\infty$ and $a\in M$, we conclude that $(\psi_1, ..., \psi_{k+1})$ is a $k$-strong honest definition for $\varphi$ (of degree $KN$).
\end{proof}
We have shown that, under a global NIP assumption, the existence of $k$-strong honest definitions characterises strong $k$-distality. Since there are strongly $k$-distal theories that are not NIP, it is natural to pose the following problem.
\begin{problem}\label{qnnoNIP1}
    Can the NIP assumption be removed from Theorem \ref{uniformkSHD}?
\end{problem}

Although strongly $k$-distal theories need not be NIP, they are $\text{NIP}_k$ --- a proof of this is given in {\cite[Proposition 6.7]{walker}}, attributed to Chernikov. Thus, one may hope that all uses of NIP can be replaced with uses of $\text{NIP}_k$. However, among other things, this would require an $\text{NIP}_k$ version of the $(p,q)$-theorem, which is yet to be developed; even the statement of such a theorem is not obvious.

The regularity lemma we shall derive in the next section is for all hypergraphs defined by a formula $\varphi(x_1, ..., x_k; y)$ with a $k$-strong honest definition in an NIP theory. In particular, we do not require the full strength of strong $k$-distality, since we only require the formula in question to have a $k$-strong honest definition, rather than all formulas. However, we still require the theory to be NIP, mainly so that we may invoke an $\varepsilon$-approximation result (Theorem \ref{epsilonapprox}); see the discussion after Problem \ref{qnnoNIP2} for a summary of the use of NIP.
\section{Regularity lemma}\label{seckdistalreglemma}
In this section, we state and prove our regularity lemma for $(k+1)$-ary formulas with a $k$-strong honest definition in an NIP theory. We begin with some background work.
\subsection{Keisler measures}
Throughout this subsection, fix a complete $\ELL$-theory $T$ and a sufficiently saturated model $\monster\models T$.

Recall that the reglarity lemmas we have seen so far for finite hypergraphs take the following form. Given a finite hypergraph $(\bigsqcup_{i=1}^k V, E)$, $V^k$ can be partitioned into boundedly many parts, such that the non-quasirandom parts make up a small fraction of $V^k$. In the model-theoretic setting, we would take $V$ to be a subset of $\monster^d$ for some $d\in\N^+$. If $V$ is finite and we put a normalised finite counting measure on $\monster^d$ whose support is $V$, then the regularity lemma would say that the non-quasirandom parts have small measure.

There is a much more natural and general class of measures that one can put on $\monster^d$: \textit{(global) Keisler measures}. To define these, we will write, for $x$ a tuple of variables and $B\subseteq\monster$, $\mathcal{L}_x(B)$ as the Boolean algebra of $B$-definable subsets of $\monster^x$, often representing an element of $\mathcal{L}_x(B)$ by an $L(B)$-formula defining it.
\begin{defn}\label{defnkeisler}
    Let $x$ be a tuple of variables and $B\subseteq \monster$. A function $\mu(x): \mathcal{L}_x(B)\to [0,1]$ is a \textit{Keisler measure over $B$} if it is a finitely additive probability measure, that is:
    \begin{enumerate}[(i)]
        \item $\mu(x=x)=1$;
        \item for all $\varphi(x)\in \mathcal{L}_x(B)$, $\mu(\neg\varphi(x))=1-\mu(\varphi(x))$;
        \item for all disjoint $\varphi_1(x) ,..., \varphi_k(x)\in \mathcal{L}_x(B)$, $\mu\left(\bigvee_{i=1}^k \varphi_i(x)\right)=\sum_{i=1}^k \mu(\varphi_i(x))$.
    \end{enumerate}
    If $B=\monster$, say that $\mu$ is a \textit{global Keisler measure}.
    
    For small $A\subseteq \monster$, say that a global Keisler measure $\mu$ is \textit{invariant over $A$} or \textit{$A$-invariant} if, for all $\varphi(x;y)\in L$ and $d,d'\in\monster$ such that $d\equiv_A d'$, $\mu(\varphi(x;d))=\mu(\varphi(x;d'))$.
\end{defn}

Given $B\subseteq\monster$ and a Keisler measure $\mu(x): \mathcal{L}_x(B)\to [0,1]$, there is a unique sensible way to extend $\mu$ to a $\sigma$-additive probability measure on the Stone space $S_x(B)$ of types in $x$ over $B$; see \cite[Section 7.1]{NIPguideonline} for the details. Abusing notation, we will conflate $\mu(x)$ with this extension.
    
We will not give much exposition of Keisler measures here, other than collecting a few facts necessary for this section; we refer the reader to \cite[Chapter 7]{NIPguide} for more details. A health warning for the previous reference is appropriate here: the exposition is in the context of an NIP theory (although the definition of Keisler measures and some of the results apply in general). Indeed, Keisler measures have much nicer properties in an NIP theory. For instance, to define products of Keisler measures as follows, we need to work in an NIP theory.
\begin{defn}\label{defnproductmeasure}
    Suppose $T$ is NIP. Let $\mu(x),\lambda(y)$ be global Keisler measures, with $\mu$ invariant over some small $M\models T$. The \textit{product} $(\mu\otimes \lambda)(x,y)$ is the global Keisler measure such that, for all $\varphi(x,y;b)\in L(\monster)$,
    \begin{equation}\label{eqnmeasureproduct}
        (\mu\otimes\lambda)(\varphi(x,y;b))=\int_{S_y(N)}f\mathop{}\! \mathrm{d}\lambda|_N,
    \end{equation}
    where $N$ is any small model containing $M\cup \{b\}$ and $f: S_y(N)\to [0,1]$ sends $q\in S_y(N)$ to $\mu(\varphi(x,d;b))$ for any $d\models q$.
\end{defn}

For (\ref{eqnmeasureproduct}) to be well-defined, it must be independent of the choice of $N$, and the function $f$ must be measurable. For the latter to be true, it is crucial that $T$ is NIP; see \cite[Section 7.4]{NIPguide}. We will use the following facts about the product operation, also found in \cite[Section 7.4]{NIPguide}.
\begin{fact}
    Suppose $T$ is NIP. Let $\mu(x),\lambda(y), \nu(z)$ be global Keisler measures, with $\mu, \lambda$ invariant over some small $M\models T$.
    \begin{enumerate}[(i)]
        \item The product $\mu\otimes \lambda$ is invariant over $M$.
        \item The product operation is associative: $\mu\otimes (\lambda\otimes \nu)=(\mu\otimes \lambda)\otimes \nu$.
    \end{enumerate}
    In light of (ii), we define $\mu^{(n)}(x_1, ..., x_n):=\mu(x_1)\otimes\dots\otimes \mu(x_n)$ for $n\in\N^+$.
\end{fact}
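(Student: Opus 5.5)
The plan is to work directly from Definition \ref{defnproductmeasure}, using two things throughout. The first is that in an NIP theory an $M$-invariant global measure is Borel definable over $M$. This means that for each $\varphi(x;w)\in L$ the map $\tp(d/M)\mapsto \mu(\varphi(x;d))$ is a well-defined Borel function on $S_w(M)$. It is exactly what makes the integrand $f$ in (\ref{eqnmeasureproduct}) measurable and the definition independent of $N$, and I will take this as the content of \cite[Section 7.4]{NIPguide} that the definition already presupposes.

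\textbf{Part (i): invariance.} Let $\varphi(x,y;w)\in L$ and let $d\equiv_M d'$. I will pick an automorphism $\sigma$ of $\monster$ fixing $M$ pointwise with $\sigma(d)=d'$, and a small model $N\supseteq M\cup\{d\}$, so that $\sigma(N)\supseteq M\cup\{d'\}$. Then $\sigma$ induces a homeomorphism $\sigma_*:S_y(N)\to S_y(\sigma(N))$, and the following hold.
\begin{enumerate}[(a)]
    \item Since $\lambda$ is $M$-invariant, $\sigma_*$ pushes $\lambda|_N$ forward to $\lambda|_{\sigma(N)}$.
    \item Since $\mu$ is $M$-invariant, the integrand $f'$ for $\varphi(x,y;d')$ over $\sigma(N)$ satisfies $f'\circ\sigma_*=f$, where $f$ is the integrand for $\varphi(x,y;d)$ over $N$. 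Indeed, if $e\models q$ then $\sigma(e)\models\sigma_*(q)$, and $\mu(\varphi(x,\sigma(e);\sigma(d)))=\mu(\varphi(x,e;d))$.
\end{enumerate}
A change of variables then gives $(\mu\otimes\lambda)(\varphi(x,y;d))=(\mu\otimes\lambda)(\varphi(x,y;d'))$.

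\textbf{Part (ii): associativity.} Fix $\varphi(x,y,z;b)$ and a small model $N\supseteq M\cup\{b\}$. By definition, $(\mu\otimes(\lambda\otimes\nu))(\varphi)=\int_{S_{yz}(N)} g\,\mathrm{d}(\lambda\otimes\nu)|_N$, where $g(\tp(e,e'/N))=\mu(\varphi(x,e,e';b))$. Here $\lambda\otimes\nu$ is defined because $\lambda$ is $M$-invariant.

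The key intermediate claim is a Fubini-type identity for $(\lambda\otimes\nu)|_N$. For every bounded Borel function $h$ on $S_{yz}(N)$,
\[\int h\,\mathrm{d}(\lambda\otimes\nu)|_N=\int_{S_z(N)}\Big(\int h(\tp(e,e'/N))\,\mathrm{d}\lambda(e)\Big)\mathrm{d}\nu|_N(\tp(e'/N)),\]
where $e'$ realises the given type. For the inner integral to make sense, I take it over $S_y(N')$ with $N'\supseteq N\cup\{e'\}$. I will prove the claim by a monotone class argument:
\begin{enumerate}[(a)]
    \item For $h$ the indicator of a clopen set $[\chi(y,z)]$ with $\chi\in L(N)$, the identity is literally the definition of $\lambda\otimes\nu$.
    \item The class of $h$ satisfying the identity is a vector space closed under bounded monotone limits, so it contains all bounded Borel functions.
\end{enumerate}
Applying the claim with $h=g$ rewrites the left-hand side as the iterated integral
\[\iint \mu(\varphi(x,e,e';b))\,\mathrm{d}\lambda(e)\,\mathrm{d}\nu(e').\]
On the other side, $((\mu\otimes\lambda)\otimes\nu)(\varphi)=\int_{S_z(N)} (\mu\otimes\lambda)(\varphi(x,y,e';b))\,\mathrm{d}\nu|_N$. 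By definition, the integrand here is itself $\int\mu(\varphi(x,e,e';b))\,\mathrm{d}\lambda(e)$, computed over a model containing $N\cup\{e'\}$. So the two sides agree.

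I expect the main obstacle to be bookkeeping around measurability and well-definedness in the iterated integrals. Concretely, I need to check that the inner function $\tp(e'/N)\mapsto\int\mu(\varphi(x,e,e';b))\,\mathrm{d}\lambda(e)$ depends only on $\tp(e'/N)$ and is Borel. The first point I will get from $M$-invariance of $\mu\otimes\lambda$, which is part (i), together with $M\subseteq N$. The second point I will get from Borel definability of the invariant measure $\mu\otimes\lambda$ in NIP. Once that is in place, the monotone class step is routine.
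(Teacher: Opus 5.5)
The paper does not prove this Fact; it refers to Section 7.4 of Simon's book. So the comparison below is with what a complete argument needs. Your part (i) is fine. The gap is in part (ii), at step (b) of your Fubini claim.

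\textbf{The gap.} Starting from indicators of clopen sets, the monotone class theorem only reaches bounded functions measurable for the $\sigma$-algebra \emph{generated by the clopen sets}, i.e.\ the Baire $\sigma$-algebra of $S_{yz}(N)$. When $L$ or $N$ is uncountable, this is strictly smaller than the Borel $\sigma$-algebra. Borel definability only tells you that $g$ is Borel.

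This already happens in NIP. In DLO, let $M$ have uncountable cofinality, let $\mu=p$ be the $M$-invariant global type of the cut just above $M$, and let $\varphi:=x>y$. Then $g=1-\mathbf{1}_K$ with $K=\{r:(y>m)\in r\text{ for all }m\in M\}$. The set $K$ is closed but not $G_\delta$, hence not Baire. Indeed, by compactness every clopen set containing $K$ contains some $[y>m]$ with $m\in M$, and countably many such $m$ are bounded above in $M$.

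Regularity does not bridge the difference either. To get the identity for an open $U=\bigcup_\alpha[\chi_\alpha]$, you would have to exchange $\int\mathrm{d}\nu|_N$ with an uncountable directed supremum of the functions $\tp(e'/N)\mapsto\lambda(\chi_\alpha(y,e'))$. These are merely Borel, so neither measurability of the supremum nor monotone convergence is available.

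Your argument is correct when $L$ and $M$ are countable, but not in general. This is not a mere technicality. The only consequence of NIP you use is Borel definability, and Conant, Gannon and Hanson have shown that over uncountable parameter sets the Morley product of Borel definable measures need not be associative.

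\textbf{A possible repair.} A genuinely NIP input is needed at this point; here is one way to supply it.
\begin{enumerate}[(1)]
    \item For $\varphi(x,y,z;b)$ with $b\in N$, both sides depend only on $\nu|_N$. By Keisler's smooth extension theorem (NIP), you may therefore assume $\nu$ is smooth over some small model $N'\supseteq N$.
    \item By the standard finite decomposition lemma for smooth measures, a smooth measure commutes with every Borel definable invariant measure. So, up to the obvious permutation of variables, $\lambda\otimes\nu=\nu\otimes\lambda$, and, using (i), $(\mu\otimes\lambda)\otimes\nu=\nu\otimes(\mu\otimes\lambda)$.
    \item On the right-hand side, the integrand $\tp(a,e/N')\mapsto\nu(\varphi(a,e,z;b))$ is continuous, since $\nu$ is definable over $N'$. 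So the clopen case extends to it by uniform approximation.
    \item On the left-hand side, in $\int g\,\mathrm{d}(\nu\otimes\lambda)|_{N'}$ the inner measure is the definable $\nu$. For open $U$, the map sending $\tp(e/N')$ to the $\nu$-measure of the fibre of $U$ over $e$ is a directed supremum of continuous functions, hence lower semicontinuous. Then $\tau$-additivity of $\lambda|_{N'}$ and Dynkin's theorem give your identity for all Borel integrands, in particular for $g$.
\end{enumerate}
Both sides then reduce to $\int_{S_y(N')}(\mu\otimes\nu)(\varphi(x,e,z;b))\,\mathrm{d}\lambda|_{N'}(e)$, using $\mu\otimes\nu=\nu\otimes\mu$ once more.
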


We will be interested in a particularly well-behaved subclass of global Keisler measures: those that are \textit{generically stable} over a small model $M\models T$. Such measures are invariant over $M$, and in an NIP theory, satisfy an extremely useful `$\varepsilon$-approximation' equivalence result, Theorem \ref{epsilonapprox} below. For the purposes of this paper, this result can be taken as the definition of generically stable measures; the reader is referred to \cite[Section 7.5]{NIPguide} for more exposition (in the NIP context). Roughly speaking, the $\varepsilon$-approximation result says that given a generically stable measure $\mu$ and a uniformly definable family $\mathcal{F}$ of sets, there are points $a_1, ..., a_n$ such that, for each $F\in\mathcal{F}$, $\mu(F)$ can be approximated by `sampling' $F$ on $\{a_1, ..., a_n\}$. Moreover, the number of points $n$ is bounded in terms of $\VC(\mathcal{F})$ and the error parameter. We use the following notation to capture this notion of sampling.
\begin{defn}
    Let $d\in\N$, $X$ be a set, and $F\subseteq X$. For $n\in\N^+$ and $a_1, ..., a_n\in X$, let
    \[\Av(\{a_1, ..., a_n\}; F):=\Av(a_1, ..., a_n; F):=\frac{\#{\{i\in [n]: a_i\in F\}}}{n}.\]
    Note that, in the above, $\{a_1, ..., a_n\}$ is treated as a multiset.
\end{defn}
We now state the $\varepsilon$-approximation result.
\begin{theorem}\label{epsilonapprox}
    Suppose $T$ is NIP, and let $M\models T$ be small. The following are equivalent for a global $M$-invariant Keisler measure $\mu(x)$.
    \begin{enumerate}[(i)]
        \item The measure $\mu$ is generically stable over $M$.
        \item Let $\varphi(x;y)\in L$ and $\varepsilon\in (0,1]$. Then there are $a_1, ..., a_n\in M^x$ such that, for all $b\in \monster$,
        \[\abs{\Av(a_1, ..., a_n; \varphi(x;b))-\mu(\varphi(x;b))}<\varepsilon.\]
    \end{enumerate}
    Moreover, in (ii), we may assume $n=O_{\VC(\varphi)}(\varepsilon^{-2}\log (2\varepsilon^{-1}))$; in particular, $n$ can be chosen independently of $\mu$ and $M$.
\end{theorem}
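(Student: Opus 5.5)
The plan is to prove the two directions separately, relying on the standard theory of generically stable measures in NIP theories \cite[Chapter 7]{NIPguide}, in particular the characterisation of generic stability as being both definable and finitely satisfiable over $M$ (dfs). The quantitative bound in (ii) will come from the VC-theorem for $\varepsilon$-approximations.

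For (ii)$\Rightarrow$(i), I will show that $\mu$ is dfs over $M$ and then invoke the NIP equivalence between dfs and generic stability over $M$.
\begin{enumerate}[(a)]
    \item \emph{Finite satisfiability.} Suppose $\mu(\varphi(x;b))>0$. Apply (ii) with $\varepsilon<\mu(\varphi(x;b))$. Then $\Av(a_1, ..., a_n;\varphi(x;b))>0$, so some $a_i\in M$ satisfies $\varphi(x;b)$.
    \item \emph{Definability.} For $\varphi(x;y)$ and $\varepsilon=1/m$, the function $b\mapsto \Av(a_1^{(m)}, ..., a_n^{(m)};\varphi(x;b))$ takes finitely many values. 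Each of its level sets is $M$-definable.
    \item Hence the map $b\mapsto\mu(\varphi(x;b))$ is a uniform limit of $M$-definable step functions. So for every closed $C\subseteq[0,1]$, the set $\{\tp(b/M):\mu(\varphi(x;b))\in C\}$ is closed, which is exactly definability over $M$.
\end{enumerate}
This direction is routine.

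For (i)$\Rightarrow$(ii), fix $\varphi(x;y)$ with $\VC(\varphi)=d$, fix $\varepsilon$, and fix $n=O_d(\varepsilon^{-2}\log(2\varepsilon^{-1}))$.
\begin{enumerate}[(a)]
    \item Consider the ``bad'' set
    \[X_n=\Bigl\{(x_1, ..., x_n): \exists b\ \bigl|\Av(x_1, ..., x_n;\varphi(x;b))-\mu(\varphi(x;b))\bigr|\geq\varepsilon\Bigr\}.\]
    The aim is to show $\mu^{(n)}(X_n)<1$.
    \item This is the VC-theorem carried out inside the product measure. Introduce a ghost sample $x_{n+1}, ..., x_{2n}$ and symmetrise under coordinate swaps. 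Symmetrisation is legitimate because $\mu^{(2n)}$ is symmetric, as generically stable measures commute with themselves. Conditional on the $2n$ points, there are only $O((2n)^d)$ traces of $\varphi(x;b)$ by Sauer--Shelah. A union bound together with Hoeffding's inequality then gives $\mu^{(2n)}(\text{bad}')\leq O(n^d)e^{-\varepsilon^2 n/8}<1/2$ for $n$ as above.
    \item Conclude by finite satisfiability. Generic stability implies $\mu^{(n)}$ is finitely satisfiable over $M$. So the complement of $X_n$, having positive measure, contains a tuple $(a_1, ..., a_n)\in M^{xn}$. The bound on $n$ depends only on $d$ and $\varepsilon$, not on $\mu$ or $M$.
\end{enumerate}

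The main obstacle is step (b): $X_n$ is an infinite union over $b\in\monster$, so it is not a priori definable, and its measurability with respect to $\mu^{(n)}$ must be handled.
\begin{itemize}
    \item First approach: approximate $\mu(\varphi(x;b))$ by one of finitely many $M$-definable levels, using the definability of $\mu$. Then $X_n$ is replaced, up to error $\varepsilon/2$, by a finite union of $M$-definable sets, to which the measure applies directly.
    \item Fallback if this becomes messy: reduce to the smooth case via a smooth extension of $\mu$ over a larger model, where the argument is classical, and transfer back using invariance. Alternatively, cite the Hrushovski--Pillay--Simon result directly \cite[Section 7.5]{NIPguide}.
\end{itemize}
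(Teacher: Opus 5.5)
Your proposal is correct and follows the same route as the paper, which simply cites \cite[Theorem 7.29]{NIPguide} for the equivalence (in NIP, generic stability over $M$ is equivalent to dfs and to finite approximability), and cites \cite[Lemma 7.24]{NIPguide} together with Sauer--Shelah for the bound on $n$. Your sketch unpacks the content of those cited results: dfs for (ii)$\Rightarrow$(i), and for the converse the symmetrised ghost-sample VC estimate inside $\mu^{(2n)}$, with definability of $\mu$ over $M$ used to replace the bad set by an $M$-definable one before applying finite satisfiability of $\mu^{(n)}$.
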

\begin{proof}
    See \cite[Theorem 7.29]{NIPguide}. The `moreover' statement is a combination of \cite[Lemma 7.24]{NIPguide} and the Sauer--Shelah Lemma (see, for example, \cite[Lemma 6.4]{NIPguide}).
\end{proof}
The finite combinatorialists among us will be relieved to know that, in an NIP theory, every normalised finite counting measure is generically stable (over any small model containing its support) --- see (the proof of) \cite[Corollary 6.9]{NIPguide} --- so the regularity lemmas that we prove can be applied to finite hypergraphs. Other notable examples of generically stable measures in the NIP context include pseudofinite counting measures, averages of types, and average measures of indiscernible sequences --- see \cite[Example 7.32]{NIPguide}.

In preparation for what follows, we prove that generic stability is preserved under relativisation. Given a global Keisler measure $\mu(x)$ and a definable $V\subseteq \monster^x$ with $\mu(V)>0$, we let the \textit{relativisation of $\mu$ to $V$}, denoted $\mu|_V(x)$, be such that $\mu|_V(\varphi(x)):=\mu(\varphi(x)\cap V)/\mu(V)$ for all $\varphi(x)\in L(\monster)$. It is an easy exercise to check that this defines a global Keisler measure.
\begin{prop}\label{relativisation}
    Suppose $T$ is NIP, and let $M\models T$ be small. Let $\mu(x)$ be a global Keisler measure, generically stable over $M$. Then, for all $\monster$-definable $V\subseteq \monster^x$ with $\mu(V)>0$, $\mu|_V(x)$ is generically stable over $M$.
\end{prop}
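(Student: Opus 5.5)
The plan is to verify condition (ii) of Theorem \ref{epsilonapprox} directly for $\mu|_V$, and to extract $M$-invariance of $\mu|_V$ (the standing hypothesis of that theorem) from the same construction. Invariance is not automatic: the parameter defining $V$ need not lie in $M$. Write $V=\chi(\monster;e)$ for some $\chi(x;w)\in L$ and $e\in\monster$, and put $\alpha:=\mu(V)>0$.

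\emph{Step 1 (sampling).} Fix $\varphi(x;y)\in L$ and $\varepsilon\in(0,1]$, and set $\eta:=\alpha\varepsilon/4$. Because $\mu$ is generically stable over $M$, I apply the direction (i)$\Rightarrow$(ii) of Theorem \ref{epsilonapprox} to $\mu$ twice, with error $\eta$ each time:
\begin{itemize}
\item once to the formula $\varphi(x;y)\wedge\chi(x;w)$, with parameter variables $(y,w)$;
\item once to $\chi(x;w)$.
\end{itemize}
Concatenating the two samples, taking equal numbers of points (repeating points if necessary), and averaging the two guarantees, I obtain a single multiset $a_1,\dots,a_n\in M^x$ approximating both families within $\eta$. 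In particular, for every $b\in\monster$,
\[\Av(a_1,\dots,a_n;\varphi(x;b)\wedge\chi(x;e))\approx_\eta\mu(\varphi(x;b)\cap V),\qquad \Av(a_1,\dots,a_n;\chi(x;e))\approx_\eta\alpha .\]
Let $A:=\{a_i: a_i\in V\}$, as a multiset. Then $|A|/n\geq\alpha-\eta>0$, so $A$ is non-empty.

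\emph{Step 2 (ratio estimate).} For every $b$, the sampled average is a quotient:
\[\Av(A;\varphi(x;b))=\frac{\Av(a_1,\dots,a_n;\varphi(x;b)\wedge\chi(x;e))}{\Av(a_1,\dots,a_n;\chi(x;e))}.\]
The target value $\mu|_V(\varphi(x;b))=\mu(\varphi(x;b)\cap V)/\alpha$ is the corresponding quotient of the true measures. A routine estimate for a quotient of two $\eta$-perturbed quantities, whose denominator is at least $\alpha-\eta\geq 3\alpha/4$, gives
\[\bigl|\Av(A;\varphi(x;b))-\mu|_V(\varphi(x;b))\bigr|\leq\frac{2\eta}{\alpha-\eta}<\varepsilon\]
uniformly in $b$. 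The points of $A$ lie in $M$, so this is exactly condition (ii) for $\mu|_V$.

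\emph{Step 3 (invariance over $M$; the main point).} Let $d\equiv_M d'$. The multiset $A\subseteq M^x$ was chosen before $d$ or $d'$ entered, and each $a\in A$ lies in $M$. Hence $\models\varphi(a;d)$ holds if and only if $\models\varphi(a;d')$, so $\Av(A;\varphi(x;d))=\Av(A;\varphi(x;d'))$. By Step 2 this gives $|\mu|_V(\varphi(x;d))-\mu|_V(\varphi(x;d'))|<2\varepsilon$. Since $\varepsilon$ was arbitrary, the two values are equal, and $\mu|_V$ is $M$-invariant.

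Step 3 is the step I expect to need the most care, precisely because $e$ need not lie in $M$. The essential observation is that the dependence on $e$ is fixed once and for all in the choice of $A$, before $d$ or $d'$ is introduced. Finally, $\mu|_V$ is a global Keisler measure, it is $M$-invariant by Step 3, and it satisfies (ii) by Step 2. The direction (ii)$\Rightarrow$(i) of Theorem \ref{epsilonapprox} then shows that $\mu|_V$ is generically stable over $M$.
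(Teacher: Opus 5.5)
\textbf{Gap in Step 1.} The way you merge the two samples does not work. If $S_1$ is an $\eta$-approximation for the family $\{\varphi(x;b)\wedge\chi(x;e')\}$ and $S_2$, of the same size, is one for $\{\chi(x;e')\}$, then for the concatenation you have $\Av(S_1\sqcup S_2;F)=\tfrac12\Av(S_1;F)+\tfrac12\Av(S_2;F)$. For $F$ in the first family the term $\Av(S_2;F)$ is completely uncontrolled, since $S_2$ was chosen with no reference to $\varphi$. The same problem occurs symmetrically for the second family. Averaging an $\eta$-approximation with an arbitrary number in $[0,1]$ can leave an error of about $\tfrac12$, not $\eta$.

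You also cannot keep $S_1$ and $S_2$ separate. The identity in Step 2, which writes $\Av(A;\varphi(x;b))$ as a quotient, requires the numerator and denominator to be computed on the same sample from which $A$ is extracted.

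\textbf{The fix.} Obtain one sample from a single application of Theorem \ref{epsilonapprox}, by coding both families into one formula. This is what the paper does. Apply the theorem, with error $\eta$, to
\[\psi(x;y,w,w_1,w_2):=(\varphi(x;y)\vee w_1=w_2)\wedge\chi(x;w).\]
Instances with $w_1\neq w_2$ give $\varphi(x;b)\wedge\chi(x;e)$, and instances with $w_1=w_2$ give $\chi(x;e)$. So a single multiset $a_1,\dots,a_n\in M^x$ approximates both within $\eta$.

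With this replacement, the rest of your argument goes through unchanged:
\begin{enumerate}[(i)]
\item Your ratio estimate is correct. It uses $\mu(\varphi(x;b)\cap V)\leq\alpha$ and gives $\frac{2\eta}{\alpha-\eta}\leq\frac{2\varepsilon}{3}<\varepsilon$.
\item Your Step 3 is a genuine addition to the paper. The paper checks only condition (ii) of Theorem \ref{epsilonapprox} for $\mu|_V$. It never verifies the standing hypothesis that $\mu|_V$ is $M$-invariant, which matters because the parameter $e$ defining $V$ need not lie in $M$. Your observation settles this: the sample $A$ lies in $M$ and works uniformly in $b$, so $\Av(A;\varphi(x;d))=\Av(A;\varphi(x;d'))$ whenever $d\equiv_M d'$.
\end{enumerate}

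Once Step 1 is repaired, your proof is the paper's argument together with the invariance check that it leaves implicit.
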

\begin{proof}
    Let $\varphi(x;y)\in L$ and $\varepsilon\in (0,1]$. Let $V=\theta(x;c)$ for $\theta(x;z)\in L$ and $c\in U$. Applying Theorem \ref{epsilonapprox} to the formula $\psi(x;y,z, w_1, w_2):=(\varphi(x;y)\vee w_1=w_2)\wedge \theta(x;z)$, there are \linebreak $a_1, ..., a_n\in M^x$ such that, for all $b\in \monster$,
        \begin{align}
            \abs{\Av(a_1, ..., a_n; \varphi(x;b)\cap V)-\mu(\varphi(x;b)\cap V)}&<\frac{\varepsilon}{2}\mu(V),\label{eqnrelativisation1}\\
            \abs{\Av(a_1, ..., a_n; V)-\mu(V)}&<\frac{\varepsilon}{2}\mu(V).\label{eqnrelativisation2}
        \end{align}
    Permuting $a_1, ..., a_n$ if necessary, we may let $m\in [n]\cup\{0\}$ be such that $a_1, ..., a_m\in V$ and $a_{m+1}, ..., a_n\not\in V$; in particular, $\Av(a_1, ..., a_n; V)=m/n$. We claim that, for all $b\in \monster$,
    \begin{equation}\label{eqnrelativisationfinal}
        \abs{\Av(a_1, ..., a_m; \varphi(x;b))-\mu|_V(\varphi(x;b))}<\varepsilon,
    \end{equation}
    which would complete the proof. Indeed, observe that for all $b\in\monster$,
    \begin{align}
        \Av(a_1, ..., a_m; \varphi(x;b))&=\Av(a_1, ..., a_m; \varphi(x;b)\cap V)\notag\\
        &=\frac{\#\{i\in [m]: a_i\in \varphi(x;b)\cap V\}}{m}\notag\\
        &=\frac{\#\{i\in [n]: a_i\in \varphi(x;b)\cap V\}}{m}\notag\\
        &=\frac{n}{m}\Av(a_1, ..., a_n; \varphi(x;b)\cap V).\label{eqnrelativisation3}
    \end{align}
    By (\ref{eqnrelativisation2}), $|\mu(V)-\frac{m}{n}|<\frac{\varepsilon}{2}\mu(V)$. Multiplying both sides by $\Av(a_1, ..., a_m; \varphi(x;b))$, we have by (\ref{eqnrelativisation3}) that for all $b\in \monster$,
    \begin{align*}
        \abs{\mu(V)\Av(a_1, ..., a_m; \varphi(x;b))-\Av(a_1, ..., a_n; \varphi(x;b)\cap V)}&<\frac{\varepsilon}{2}\mu(V)\Av(a_1, ..., a_m; \varphi(x;b))\\
        &<\frac{\varepsilon}{2}\mu(V).
    \end{align*}
    Combining with (\ref{eqnrelativisation1}),
    \[\abs{\mu(V)\Av(a_1, ..., a_m; \varphi(x;b))-\mu(\varphi(x;b)\cap V)}<\varepsilon\mu(V).\]
    Dividing both sides by $\mu(V)$, we obtain (\ref{eqnrelativisationfinal}) as required.
\end{proof}
\subsection{Main proof}
We now finally come to our first regularity lemma. This result will be bootstrapped to prove more general statements in the next subsection, but the bulk of the argument is made in this subsection.

Throughout this subsection, we fix the following.
\begin{itemize}
    \item An NIP $L$-theory $T$ and models $M,\monster\models T$ with $\monster$ sufficiently saturated and $M$ small.
    \item A formula $\varphi(x_1, ..., x_k; x_{k+1})\in L$; write $x:=(x_1, ..., x_{k+1})$.
    \item A $k$-strong honest definition $(\psi_i(x_{\neq i},z_i): i\in [k+1])$ for $\varphi$ of degree $N$.
\end{itemize}
\begin{remark}
    We had previously indexed the variables in $\varphi$ as $x_1, ..., x_k, y$, which emphasises the different roles of the $x$- and $y$-variables in the $k$-strong honest definition. In this section, our main result is a regularity lemma for the $(k+1)$-uniform hypergraph $\varphi(x_1, ..., x_{k+1})$, where, a priori, none of the variables $x_1, ..., x_{k+1}$ are special. Thus, it is sensible to index the variables as $x_1, ..., x_{k+1}$ (note, however, that $x_{k+1}$ still plays a special role in the proof). This has the added bonus of cleaner presentation. In particular, writing $x:=(x_1, ..., x_{k+1})$, a $k$-strong honest definition for $\varphi$ has the form $(\psi_i(x_{\neq i},z_i): i\in [k+1])$.
\end{remark}

Our goal is the following theorem.
\begin{theorem}\label{kSHDreglemma}
    ($T$ is NIP.) For all $\delta \in (0,1]$, there are $\theta_i(x_{\neq i},z_i)\in L$ for $i\in [k+1]$ and a natural number $K\leq \text{poly}_{\varphi,\psi_1, ..., \psi_{k+1},N}(\delta^{-1})$ such that the following holds.

    Let $\mu(x_{\neq k+1})$ and $\nu(x_{k+1})$ be Keisler measures, with $\nu(x_{k+1})$ generically stable over $M$, and let $\omega(x):=\nu(x_{k+1})\otimes\mu(x_{\neq k+1})$. Then there are partitions $\mathcal{P}_i$ of $\monster^{x_{\neq i}}$ for $i\in [k+1]$, each of size at most $K$, such that:
    \begin{enumerate}[(i)]
        \item for all $i\in [k+1]$ and $P_i\in\mathcal{P}_i$, there is $c_i\in M^{z_i}$ such that $P_i=\theta_i(x_{\neq i},c_i)$;
        \item $\sum \omega(P_1\wedge \cdots\wedge P_{k+1})\geq 1-\delta$, where the sum ranges over all $(P_1, ..., P_{k+1})\in \mathcal{P}_1\times\cdots\times \mathcal{P}_{k+1}$ such that $P_1\wedge \cdots \wedge P_{k+1}$ is $\varphi$-homogeneous.
    \end{enumerate}
\end{theorem}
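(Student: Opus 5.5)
Following the distal regularity lemma of Chernikov--Starchenko, the plan is to sample $\nu$ by a small finite set, apply the $k$-strong honest definition to $a=(a_1,\dots,a_k)$ against that set, and let the finitely many resulting instances of $\psi_1,\dots,\psi_{k+1}$ generate the partitions.

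\textbf{Step 1 (sampling).} Consider the definable family of sets of the form $\varphi(a;x_{k+1})\triangle\varphi(a';x_{k+1})$ intersected with sets defined by Boolean combinations of instances of $\psi_i$. By Theorem \ref{epsilonapprox}, applied to $\nu(x_{k+1})$ with a suitable formula and error $\varepsilon$, there is a multiset $B=\{b_1,\dots,b_n\}\subseteq M^{x_{k+1}}$ with $n=\poly(\varepsilon^{-1})$ that $\varepsilon$-approximates $\nu$ on this family. If $|B|<2$, we duplicate or add a point; this is harmless.

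\textbf{Step 2 (the partitions).} Write $e_i:=|z_i|$. Let $\mathcal{P}_{k+1}$ be the partition of $\monster^{x_{\neq k+1}}$ into atoms of the Boolean algebra generated by $\{\psi_{k+1}(x_{\neq k+1},c):c\in B^{e_{k+1}}\}$. For $i\le k$, let $\mathcal{P}_i$ be the partition of $\monster^{x_{\neq i}}$ into atoms generated by $\{\psi_i(x_{\neq i},c):c\in B^{e_i}\}$, where $x_{k+1}$ is among the variables of $x_{\neq i}$. Using NIP (Sauer--Shelah) on the dual family, the number of atoms is $\poly(|B|)=\poly(\delta^{-1})$. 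Atoms of this kind are uniformly definable, realised by a formula $\theta_i(x_{\neq i},z_i)$ with parameters from $B\subseteq M$. We pad so that all parts use the same $\theta_i$, using the standard coding trick with the two distinct elements of $B$; this gives (i).

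\textbf{Step 3 (homogeneity on most of the space).} Fix $a\in\monster^{x_{\neq k+1}}$. By Definition \ref{defnkSHDlegit} there are $c^{(j)}\in B$, $j\in[N]$, such that every $b\in B$ is covered by some $j$. That is, the cell $\psi_{k+1}(x_{\neq k+1},c^{(j)}_{k+1})\wedge\bigwedge_{i\le k}\psi_i(x_{\neq i},b,c^{(j)}_i)$ containing $(a,b)$ is $\varphi$-homogeneous. Since each product cell $P_1\wedge\dots\wedge P_{k+1}$ containing $(a,b)$ is contained in that cell, every cylinder cell through a point $(a,b)$ with $b\in B$ is homogeneous. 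We then transfer this from $B$ to $\nu$-almost all $b$. Consider the set $D_a$ of $b$ such that the cell of $(a,b)$ is non-homogeneous. This set is a finite union of definable sets of the form ``$(a,b)$ lies in a cell $C$ and the cell meets both $\varphi$ and $\neg\varphi$'', with the cell determined by parameters from $B$. The sample misses it entirely, so by the $\varepsilon$-approximation $\nu(D_a)<\varepsilon\cdot(\text{number of cell types})$. Integrating over $a$ with respect to $\mu$, using the definition of $\omega=\nu\otimes\mu$ (Definition \ref{defnproductmeasure}), gives $\omega(\text{non-homogeneous part})\le\delta$ for an appropriate polynomial choice of $\varepsilon$.

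\textbf{Main obstacle.} The main obstacle is Step 3. Theorem \ref{epsilonapprox} requires a single formula whose parameters range independently of $B$, but $D_a$ is defined using parameters from $B$ and quantification over cells. I would try to handle this by taking a fixed formula $\chi(x_{k+1};a,c^{(1)},\dots,c^{(N)},\bar c)$ expressing non-homogeneity of the relevant cell, where $\bar c$ encodes which atom. I would apply $\varepsilon$-approximation to $\chi$ before choosing $B$, so that the bound $n$ depends only on $\VC(\chi)$ and the uniformity in all parameters handles the dependence on $B$. I would then take a union bound over the $\le N$ choices of $j$ and the polynomially many atoms, which keeps $K$ polynomial.
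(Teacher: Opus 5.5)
Your overall plan (sample $\nu$ first, put the sample into $B$, apply the honest definition to $(a,B)$, integrate over $a$) is the right one. But Steps 2--3 have a genuine gap: partitions generated only by instances of the $\psi_i$ cannot give (ii). The honest definition controls a single fibre. For $b\in B$ it yields a cell $\gamma=\gamma_{c^{(j)}}:=\bigwedge_{i\in[k+1]}\psi_i(x_{\neq i},c^{(j)}_i)$ with $a\in\gamma|_b:=\{a':(a',b)\in\gamma\}$ and $\gamma|_b\subseteq\varphi^\sigma(x_{\neq k+1};b)$, where $\varphi^1:=\varphi$ and $\varphi^0:=\neg\varphi$. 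It says nothing about the fibres $\gamma|_{b'}$ for $b'\neq b$. A product cell $P_1\wedge\cdots\wedge P_{k+1}$ through $(a,b)$, however, contains points $(a',b')$ with $b'$ ranging over a whole region in the $x_{k+1}$-direction. So your claim that every product cell through $(a,b)$ with $b\in B$ is homogeneous is false. The transfer in Step 3 can at best show that for $\omega$-most $(a,b)$ the \emph{fibre over $b$} of the cell through $(a,b)$ is homogeneous, which is not what (ii) asks.

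Concretely, in $\mathrm{DLO}$ take $k=1$, $\varphi(x_1;x_2):=x_1<x_2$, $\psi_2(x_1,z_2)$ a strong honest definition for $\varphi$, and $\psi_1(x_2,z_1)$ the formula $x_2=x_2$. This is a $1$-strong honest definition of degree $1$. Your $\mathcal{P}_1$ is then $\{\monster^{x_2}\}$, so every product cell has the form $\{(a',b'):a'\in P\}$ with $P\neq\emptyset$, and none is $\varphi$-homogeneous. Making $\psi_1,\dots,\psi_k$ trivial gives the same failure for any $k$.

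The missing idea is to build the fibre condition into the partition, as the paper does. For each cell $\gamma$ with parameters from $B$ and each $\sigma\in\{0,1\}$, let $D^\sigma_\gamma:=\{b:\gamma|_b\subseteq\varphi^\sigma(x_{\neq k+1};b)\}$. Add these sets, viewed as subsets of $\monster^{x_{\neq 1}}$ depending only on $x_{k+1}$, to the generators of $\mathcal{P}_1$. They are instances of a single formula with parameters from $B$, so $|\mathcal{P}_1|$ stays polynomial. Each $\gamma\wedge D^\sigma_\gamma\subseteq\varphi^\sigma$ is then genuinely homogeneous and is a union of product cells. So it suffices to show $\nu(G|^a)\geq 1-\delta$ for every $a$, where $G:=\bigvee_{\gamma,\sigma}\gamma\wedge D^\sigma_\gamma$; this is where your fibrewise reasoning belongs. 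With $c^{(1)},\dots,c^{(N)}$ from the honest definition for $(a,B)$, let $F_{a,c^{(1)},\dots,c^{(N)}}$ be the set of $b$ for which some $\gamma_{c^{(j)}}$ contains $a$ in a $\varphi(x_{\neq k+1};b)$-homogeneous fibre over $b$. This set is contained in $G|^a$.

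This also settles your ``main obstacle'' without a union bound over atoms, which would be circular: the number of atoms is polynomial in $|B|$, and $|B|$ is of order $\varepsilon^{-2}\log(2\varepsilon^{-1})$, so $\varepsilon$ times it need not be small. The sets $F_{a,c^{(1)},\dots,c^{(N)}}$ form a family uniformly definable in boundedly many parameters (encode the $N$ cells, not the atom, which depends on all of $B$). So first take an $\varepsilon$-approximating sample $S$ from $M^{x_{k+1}}$ for this family via Theorem \ref{epsilonapprox}, and then let $B$ contain the coordinates of $S$. The honest definition gives $B^{x_{k+1}}\subseteq F_{a,c^{(1)},\dots,c^{(N)}}$, hence $\Av(S;F_{a,c^{(1)},\dots,c^{(N)}})=1$ and $\nu(F_{a,c^{(1)},\dots,c^{(N)}})\geq 1-\varepsilon$ directly.
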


In (ii), the notation of $P_1\wedge\cdots\wedge P_{k+1}$ can be understood by conflating $P_i$ with the formula that defines it, given by (i). That is,
\[P_1\wedge\cdots\wedge P_{k+1}=\{x\in \monster: x_{\neq i}\in P_i\text{ for all }i\in [k+1]\}.\]

Our proof strategy follows that of \cite[Theorem 5.8]{regularitylemma}, but it is more efficient --- we will discuss this after the proof.
\begin{defn}
    Let $B\subseteq \monster$ not necessarily be small. A \textit{$B$-definable cell} is a set $\gamma\subseteq\monster^x$ of the form $\psi_1(x_{\neq 1},c_1)\wedge \cdots\wedge \psi_{k+1}(x_{\neq k+1},c_{k+1})$, where $c:=(c_1, ..., c_{k+1})\in B^{|z_1|+\cdots+|z_{k+1}|}$; write $\gamma_c$ for this set. Write $\mathcal{G}_B$ for the set of all $B$-definable cells.

    For $a\in \monster^{x_{\neq k+1}}$ and $c^{(1)}, ..., c^{(N)}\in \monster^{|z_1|+\cdots+|z_{k+1}|}$, let $F_{a,c^{(1)}, ..., c^{(N)}}$ be the set
    \[\{b\in \monster^{x_{k+1}}: a\models (x_{\neq k+1},b)\in\gamma_{c^{(j)}}\vdash \varphi(x_{\neq k+1};b)\leftrightarrow \varphi(a;b)\text{ for some }j\in [N]\}.\]

    A tuple $\Gamma=(\gamma_{c^{(1)}}, ..., \gamma_{c^{(N)}})\in \mathcal{G}_B^N$ of $B$-definable cells is \textit{$B$-complete} if there is $a\in \monster^{x_{\neq k+1}}$ such that $B^{x_{k+1}}\subseteq F_{a,c^{(1)}, ..., c^{(N)}}$, in which case we say that $\Gamma$ is $B$-complete \textit{with respect to} $a$. For $\Gamma=(\gamma_{c^{(1)}}, ..., \gamma_{c^{(N)}})\in \mathcal{G}^N_B$, we write $\gamma\in \Gamma$ to mean $\gamma=\gamma_{c^{(j)}}$ for some $j\in [N]$.
\end{defn}

\begin{remark}\label{rmkcompletecell}
    Since $(\psi_i(x_{\neq i},z_i): i\in [k+1])$ is a $k$-strong honest definition for $\varphi$ of degree $N$, for all $a\in \monster$ and $B\subseteq \monster$ with $2\leq |B|<\infty$, there are $c^{(1)}, ..., c^{(N)}\in B$ such that $(\gamma_{c^{(1)}}, ..., \gamma_{c^{(N)}})\in\mathcal{G}_B^N$ is $B$-complete with respect to $a$.
\end{remark}

We prove the following `cutting lemma'.
\begin{prop}\label{cuttinglemma}
    For all $r\in\mathbb{R}_{\geq 1}$, there is a finite set $B\subseteq M$ with $|B|\geq 2$ and $|B|=O_{\varphi,\psi_1, ..., \psi_{k+1},N}(r^2\log 2r)$ such that the following holds.
    
    Let $\nu(x_{k+1})$ be a Keisler measure, generically stable over $M$. For $a\in \monster$ and $c^{(1)}, ..., c^{(N)}\in B$, if $(\gamma_{c^{(1)}}, ..., \gamma_{c^{(N)}})\in\mathcal{G}_B^N$ is $B$-complete with respect to $a$, then $\nu(F_{a,c^{(1)}, ..., c^{(N)}})\geq 1-\frac{1}{r}$.
\end{prop}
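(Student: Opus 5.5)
The plan is to apply the $\varepsilon$-approximation theorem (Theorem \ref{epsilonapprox}) to the uniformly definable family of \emph{complements} of the sets $F_{a,c^{(1)}, ..., c^{(N)}}$, with $\varepsilon$ of order $1/r$. The resulting sample serves as an $\varepsilon$-net. If $\Gamma$ is $B$-complete, then none of the sample points lies in the complement of $F$, so that complement must have small measure.

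\textbf{Step 1: define $F$ uniformly.} First I show that $F_{a,c^{(1)}, ..., c^{(N)}}$ is uniformly definable. Exactly as with $\theta_{\Psi,N}$ in the proof of Theorem \ref{uniformkSHD}, consider the $L$-formula $\chi(x_{k+1}; x_{\neq k+1}, z^{(1)}, ..., z^{(N)})$ given by
\[\bigvee_{j=1}^N\Bigl(\bigwedge_{i=1}^{k+1}\psi_i(x_{\neq i},z^{(j)}_i)\wedge \forall x'_{\neq k+1}\Bigl(\bigwedge_{i=1}^{k+1}\psi_i(x'_{\neq i},z^{(j)}_i)\to(\varphi(x'_{\neq k+1};x_{k+1})\leftrightarrow\varphi(x_{\neq k+1};x_{k+1}))\Bigr)\Bigr).\]
Here $x'_{\neq i}$ denotes the tuple $(x'_{\neq k+1},x_{k+1})$ with its $i$th entry removed. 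By construction $F_{a,c^{(1)}, ..., c^{(N)}}=\chi(\monster;a,c^{(1)}, ..., c^{(N)})$. Hence $\neg\chi$ is a single $L$-formula, and its VC-dimension depends only on $\varphi,\psi_1, ..., \psi_{k+1},N$.

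\textbf{Step 2: build $B$ from an $\varepsilon$-approximation.} Apply Theorem \ref{epsilonapprox} to $\neg\chi$ with $\varepsilon:=1/r$. This gives $a_1, ..., a_n\in M^{x_{k+1}}$ with $n=O_{\VC(\chi)}(r^2\log 2r)$ such that
\[\bigl|\Av(a_1, ..., a_n;\neg\chi(x_{k+1};a,\bar c))-\nu(\neg\chi(x_{k+1};a,\bar c))\bigr|<1/r\]
for all parameters $(a,\bar c)$. Let $B\subseteq M$ consist of all coordinates of $a_1, ..., a_n$, together with two further distinct elements of $M$ so that $|B|\geq 2$. Then $|B|\leq n|x_{k+1}|+2=O_{\varphi,\psi_1, ..., \psi_{k+1},N}(r^2\log 2r)$, and each $a_l$ lies in $B^{x_{k+1}}$.

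\textbf{Step 3: conclude.} Suppose $(\gamma_{c^{(1)}}, ..., \gamma_{c^{(N)}})$ is $B$-complete with respect to $a$. Then $B^{x_{k+1}}\subseteq F_{a,\bar c}$, so no $a_l$ satisfies $\neg\chi(x_{k+1};a,\bar c)$, and the average above is $0$. Therefore $\nu(\neg\chi(x_{k+1};a,\bar c))<1/r$, that is, $\nu(F_{a,\bar c})\geq 1-\frac1r$.

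\textbf{Main obstacle: the order of quantifiers.} The sample points from Theorem \ref{epsilonapprox} depend on $\nu$, while the statement appears to fix $B$ before $\nu$. A single finite $B$ cannot work for every $M$-generically stable $\nu$; for example, Dirac measures at points of $M$ outside $B$ would defeat it. So I would read the statement as: for each such $\nu$ there is a $B$, with the size bound uniform in $\nu$ and $M$. This is exactly what the ``moreover'' clause of Theorem \ref{epsilonapprox} supplies, and it is all the later regularity argument needs. The only other point requiring care is verifying that $\chi$ correctly expresses the ``$\vdash$'' condition, which it does via the universal quantifier over $x'_{\neq k+1}$.
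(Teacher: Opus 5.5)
Your proposal is correct and is essentially the paper's argument: you $\varepsilon$-approximate the uniformly definable family of sets $F_{a,\bar c}$ (you use their complements, which is equivalent), put the coordinates of the sample points into $B$ with padding so that $|B|\geq 2$, and use $B$-completeness to force every sample point into $F$. Your reading of the quantifier order is also right, since the paper's proof likewise takes the sample, and hence $B$, from the given $\nu$ with only the size bound uniform, and that is all Theorem \ref{kSHDreglemma} uses.
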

\begin{proof}
    Let $d:=|x_{k+1}|$. Applying Theorem \ref{epsilonapprox} to the definable family $\mathcal{F}:=\{F_{a,c^{(1)}, ..., c^{(N)}}: a, c^{(1)}, ..., c^{(N)}\in \monster\}$, there is a multiset $S\subseteq M^d$ of size $O_{\varphi,\psi_1, ..., \psi_{k+1}, N}(r^2\log 2r)$, such that, for all $F\in\mathcal{F}$, $|\nu(F)-\text{Av}(S;F)|\leq \frac{1}{r}$.

    Choose $B\subseteq M$ such that $B$ contains all of singletons appearing in $S$ and $2\leq |B|\leq d|S|+2$. If $(\gamma_{c^{(1)}}, ..., \gamma_{c^{(N)}})\in\mathcal{G}_B^N$ is $B$-complete with respect to $a$, then $S\subseteq B^d\subseteq F_{a,c^{(1)}, ..., c^{(N)}}$ as sets, that is, $\text{Av}(S;F_{a,c^{(1)}, ..., c^{(N)}})=1$, and so $\nu(F_{a,c^{(1)}, ..., c^{(N)}})\geq 1-\frac{1}{r}$.
\end{proof}
\begin{defn}
    Let $Z\subseteq \monster^x$, $a\in \monster^{x_{\neq k+1}}$, and $b\in \monster^{x_{k+1}}$. Write $Z|^a:=\{b'\in \monster^{x_{k+1}}: (a,b')\in Z\}$ and $Z|_b:=\{a'\in \monster^{x_{\neq k+1}}: (a',b)\in Z\}$.
\end{defn}
    We are now ready to prove Theorem \ref{kSHDreglemma}.
\begin{proof}[Proof of Theorem \ref{kSHDreglemma}]
    Apply Proposition \ref{cuttinglemma} with $r:=\frac{1}{\delta}\geq 1$ to obtain $B\subseteq M$ with $|B|\geq 2$ and $|B|=O_{\varphi,\psi_1, ..., \psi_{k+1},N}(\delta^{-2}\log 2\delta^{-1})=O_{\varphi,\psi_1, ..., \psi_{k+1},N}(\delta^{-3})$. We have $|\mathcal{G}_B|\leq |B|^l$ for $l:=|z_1|+\cdots+|z_{k+1}|$.
    
    For $\gamma\in\mathcal{G}_B$, let $D_\gamma=\{b\in \monster^{x_{k+1}}: \gamma|_b\subseteq \varphi(x_{\neq k+1};b)\text{ or }\gamma|_b\subseteq \neg\varphi(x_{\neq k+1};b)\}$. Let $G:=\bigvee_{\gamma\in \mathcal{G}_B}\gamma\wedge D_\gamma$. We claim that $\omega(G)\geq 1-\delta$. It suffices to show that, for all $a\in \monster^{x_{\neq k+1}}$, $\nu\left(G|^a\right)\geq 1-\delta$.
    
    Fix $a\in \monster^{x_{\neq k+1}}$. By Remark \ref{rmkcompletecell}, there is $\Gamma=(\gamma_{c^{(1)}}, ..., \gamma_{c^{(N)}})\in \mathcal{G}_B^N$ which is $B$-complete with respect to $a$. It suffices to show that $(\bigvee_{\gamma\in \Gamma}\gamma\wedge D_\gamma)|^a\supseteq F_{a,c^{(1)}, ..., c^{(N)}}$, as $\nu(F_{a,c^{(1)}, ..., c^{(N)}})\geq 1-\delta$ by our choice of $B\subseteq M$ from Proposition \ref{cuttinglemma}. So, suppose $b\in F_{a,c^{(1)}, ..., c^{(N)}}$. Then, there is $\gamma\in \Gamma$ such that
    \[a\models (x_{\neq k+1}, b)\in \gamma\vdash \varphi(x_{\neq k+1}; b)\leftrightarrow \varphi(a;b).\]
    Thus, $(a,b)\in\gamma$. Also, writing $\varphi^1:=\varphi$ and $\varphi^0:=\neg\varphi$, we have $\gamma|_b\subseteq \varphi^\sigma(x_{\neq k+1};b)$ for the unique $\sigma\in\{0,1\}$ such that $\monster\models \varphi^\sigma(a;b)$, so $b\in D_\gamma$ as required. We have shown that $\omega(G)\geq 1-\delta$.

    For $\gamma\in\mathcal{G}_B$ and $\sigma\in\{0,1\}$, let $D_\gamma^\sigma:=\{b\in\monster^{x_{k+1}}: \gamma|_b\subseteq \varphi^\sigma(x_{\neq k+1};b)\}$, so that $D_\gamma=D_\gamma^0\sqcup D_\gamma^1$. Let the partition $\mathcal{P}_1$ of $\monster^{x_{\neq 1}}$ be the set of Boolean atoms\footnote{Given a set $X$ and a collection $\mathcal{S}=\{S_1, ..., S_n\}$ of subsets of $X$, a \textit{Boolean atom} of $\mathcal{S}$ is a non-empty set of the form $\bigcap_{i\in [n]}S_i^{\varepsilon(i)}$ for some $\varepsilon: [n]\to \{0,1\}$, where $S^1:=S$ and $S^0:=X\setminus S$ for $S\subseteq X$.} of $\{\psi_1(x_{\neq 1},c_1): c_1\in B\}\cup \{D_\gamma^\sigma: \gamma\in\mathcal{G}_B, \sigma\in \{0,1\}\}$, where $D_\gamma^\sigma$ is identified with the definable set $\{(x_2, ..., x_{k+1}): x_{k+1}\in D_\gamma^\sigma\}$. For $i\in [k+1]\setminus \{1\}$, let the partition $\mathcal{P}_i$ of $\monster^{x_{\neq i}}$ be the set of Boolean atoms of $\{\psi_i(x_{\neq i},c_i): c_i\in B\}$. Since $|\mathcal{G}_B|\leq |B|^l$ and $M$ is NIP, for all $i\in [k+1]$ we have that
    \begin{equation}\label{eqntypecounting}
        |\mathcal{P}_i|\leq \text{poly}_{\varphi,\psi_1, ..., \psi_{k+1},N}(|B|)\leq \text{poly}_{\varphi,\psi_1, ..., \psi_{k+1},N}(\delta^{-1}).
    \end{equation}

    It is clear that there are $L$-formulas $\theta_i(x_{\neq i},z_i)$ for $i\in [k+1]$, which are functions of $\varphi, \psi_1, ..., \psi_{k+1}, N$, and $\delta$, such that (i) holds. To see that (ii) holds, recall that $\omega(G)\geq 1-\delta$ for $G=\bigvee_{\gamma\in\mathcal{G}_B}\gamma\wedge D_\gamma=\bigvee_{\gamma\in\mathcal{G}_B}\bigvee_{\sigma\in\{0,1\}}\gamma\wedge D_\gamma^\sigma$. For all $\gamma\in\mathcal{G}_B$ and $\sigma\in\{0,1\}$, $\gamma\wedge D_\gamma^\sigma$ is $\varphi$-homogeneous (indeed, $\gamma\wedge D_\gamma^\sigma\subseteq \varphi^\sigma(x)$), and by the definition of Boolean atoms, $\gamma\wedge D_\gamma^\sigma$ is a union of sets of the form $P_1\wedge \cdots \wedge P_{k+1}$ where $P_i\in\mathcal{P}_i$. Therefore, the union of all $\varphi$-homogeneous sets of the form $P_1\wedge\cdots\wedge P_{k+1}$ contains $G$, and has $\omega$-measure at least $1-\delta$. This shows that (ii) holds.
\end{proof}
    As mentioned before, our proof strategy follows that of \cite[Theorem 5.8]{regularitylemma}. There, they also prove a cutting lemma, which they use to prove that the hypergraph satisfies the definable `strong Erd\H{o}s--Hajnal property' \cite[Proposition 4.4]{regularitylemma}, before bootstrapping it into a regularity lemma. Let us state an abridged form of \cite[Proposition 4.4]{regularitylemma}.
    \begin{prop}
        Let $\chi(x_1, ..., x_{k+1})$ be a relation definable in a distal structure $\mathcal{M}$, and suppose $|x_1|=\cdots=|x_{k+1}|$. For all $\alpha\in (0,1]$, there are $\varepsilon>0$ and $\theta_i(x_i, z_i)\in L$ for $i\in [k+1]$ such that the following holds.
        
        Let $\nu(x_{k+1})$ be a Keisler measure, generically stable over $\mathcal{M}$. If $\nu^{(k+1)}(\chi)\geq \alpha$, then there are $c_1, ..., c_{k+1}\in \mathcal{M}$ such that $\bigwedge_{i\in [k+1]}\theta_i(x_i, c_i)$ is contained in $\chi$ and has $\nu^{(k+1)}$-measure at least $\varepsilon$.
    \end{prop}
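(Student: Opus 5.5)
The plan is to argue by induction on the number $k+1$ of variables. In the inductive step I treat $\chi(x_1, ..., x_k; x_{k+1})$ as a binary formula in $x':=(x_1, ..., x_k)$ and $y:=x_{k+1}$, and apply the $k=1$ instance of the machinery of this section. By Theorem \ref{distalequivSHD}, $\chi(x';y)$ has a strong honest definition $\psi(x';z)$, which is a $1$-strong honest definition of degree $1$.

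To get uniform formulas $\theta_i$, I would prove a stronger statement: one in which $\chi(x;w)$ is allowed a parameter $w\in\mathcal{M}$, and the $\theta_i(x_i,z_i)$ and $\varepsilon$ depend only on $\chi(x;w)$ and $\alpha$. The parameter is absorbed into the $z_i$. The base case $k+1=1$ is trivial: take $\theta_1(x_1;w):=\chi(x_1;w)$ and $\varepsilon:=\alpha$.

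\textbf{Inductive step.} I would proceed as follows.
\begin{enumerate}[(1)]
\item Apply Proposition \ref{cuttinglemma} (with $k=1$, $N=1$) with $r:=2/\alpha$. This gives a finite $B\subseteq\mathcal{M}$ with $|B|=O_{\chi,\psi}(\alpha^{-2}\log(2\alpha^{-1}))$.
\item For $c\in B$, set
\[D^1_c:=\{b: \psi(\mathcal{U};c)\subseteq\chi(x';b)\},\qquad D^0_c:=\{b: \psi(\mathcal{U};c)\cap\chi(x';b)=\emptyset\}.\]
\item Exactly as in the proof of Theorem \ref{kSHDreglemma}, the set $G:=\bigvee_{c\in B}\psi(x';c)\wedge(D^0_c\vee D^1_c)$ has $\omega$-measure at least $1-\alpha/2$, where $\omega=\nu(x_{k+1})\otimes\nu^{(k)}(x')$. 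For this I should check that the product taken in this order agrees with $\nu^{(k+1)}$ on the sets involved; generic stability gives the needed commutation.
\item Hence $\chi\cap G$ has measure at least $\alpha/2$.
\item The set $\chi\cap G$ is covered by the at most $|B|$ sets $\psi(x';c)\wedge D^1_c$, each contained in $\chi$. The $D^0_c$ pieces are disjoint from $\chi$.
\item By pigeonhole, some $c\in B$ satisfies
\[\nu^{(k+1)}\bigl(\psi(x';c)\wedge D^1_c\bigr)\geq\alpha':=\frac{\alpha}{2|B|}.\]
\end{enumerate}

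\textbf{Splitting the box.} The set $\psi(x';c)\wedge D^1_c$ is a product of a set in $x'$ and a set in $x_{k+1}$. By Fubini for the product measure, its measure is $\nu^{(k)}(\psi(x';c))\cdot\nu(D^1_c)$, so both factors are at least $\alpha'$.

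I then apply the inductive hypothesis to the $k$-ary formula $\psi(x_1, ..., x_k;z)$, with $z$ treated as a parameter and with density $\alpha'$. This gives $\varepsilon'>0$ and formulas $\theta_i(x_i,z_i)$ for $i\in[k]$, together with $c_i\in\mathcal{M}$, such that $\bigwedge_{i\le k}\theta_i(x_i,c_i)\subseteq\psi(x';c)$ and this box has $\nu^{(k)}$-measure at least $\varepsilon'$.

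Finally, let $\theta_{k+1}(x_{k+1};z)$ be the formula $\forall x'(\psi(x';z)\to\chi(x';x_{k+1}))$, which defines $D^1_c$ when $z=c$. Then
\[\bigwedge_{i\le k+1}\theta_i(x_i,c_i)\subseteq\chi,\]
with $c_{k+1}:=c$, and this box has measure at least $\varepsilon:=\varepsilon'\alpha'$. Note that $\alpha'$ is polynomial in $\alpha$, and $\varepsilon'$ depends only on $\psi$ and $\alpha'$, so $\varepsilon$ depends only on $\chi$ and $\alpha$.

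\textbf{Expected main obstacle.} I expect the bookkeeping, rather than the combinatorics, to be the hard part. Two issues need care. First, the measure-theoretic step: the cutting argument naturally controls $\nu(x_{k+1})\otimes\nu^{(k)}(x')$, whereas the statement is about $\nu^{(k+1)}$. Fubini for definable rectangles must be justified via generic stability, since generically stable measures commute with invariant ones. Second, the uniformity of the $\theta_i$: the parameter $c$ varies over $B$, so the inductive hypothesis must be applied uniformly in $z$. This is why I would strengthen the induction to formulas with parameters from the start.
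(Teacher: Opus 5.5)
Your proof is correct. It follows essentially the route the paper attributes to Chernikov--Starchenko for this result; the paper itself only cites \cite[Proposition 4.4]{regularitylemma}. That route is a cutting lemma plus pigeonhole for the binary split $\chi(x';x_{k+1})$, here realised with Proposition~\ref{cuttinglemma} and the set $G$ from the proof of Theorem~\ref{kSHDreglemma}, followed by induction on arity applied to the uniformly definable cell $\psi(x';c)$.

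Only cosmetic fixes are needed.
\begin{enumerate}[(i)]
\item There are up to $|B|^{|z|}$ cells rather than $|B|$. So $\alpha'$ should be set using the uniform bound on $|B|^{|z|}$, which also makes $\varepsilon$ independent of $\nu$ and $e$.
\item The commutation $\nu(x_{k+1})\otimes\nu^{(k)}=\nu^{(k+1)}$ is needed only to transfer the hypothesis $\nu^{(k+1)}(\chi)\geq\alpha$, since rectangles get product measure under any product. You could avoid it entirely by splitting off $x_1$ instead of $x_{k+1}$, because $\nu^{(k+1)}=\nu(x_1)\otimes\nu^{(k)}(x_2,\ldots,x_{k+1})$ already integrates the $\nu$-measure of $x_1$-fibres.
\end{enumerate}
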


    In our proof, we observe that such an intermediate step is not necessary: once we have a cutting lemma, we can directly define the appropriate partitions to give us the desired regularity lemma. Note that an analogue of the definable strong Erd\H{o}s--Hajnal property for our relation $\varphi$ can then easily be \textit{deduced} from our regularity lemma.
    \begin{cor}
        ($T$ is NIP, $\varphi$ as before.) Suppose $|x_1|=\cdots=|x_{k+1}|$. For all $\alpha\in (0,1]$, there are $\varepsilon>0$ and $\theta_i(x_{\neq i}, z_i)\in L$ for $i\in [k+1]$ such that the following holds.
        
        Let $\nu(x_{k+1})$ be a Keisler measure, generically stable over $M$. If $\nu^{(k+1)}(\varphi)\geq \alpha$, then there are $c_1, ..., c_{k+1}\in M$ such that $\bigwedge_{i\in [k+1]}\theta_i(x_{\neq i}, c_i)$ is contained in $\varphi$ and has $\nu^{(k+1)}$-measure at least $\varepsilon$.
    \end{cor}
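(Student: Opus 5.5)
The plan is to deduce the corollary directly from Theorem \ref{kSHDreglemma} by a pigeonhole argument. Fix $\alpha\in(0,1]$ and apply Theorem \ref{kSHDreglemma} with $\delta:=\alpha/2$. This produces formulas $\theta_i(x_{\neq i},z_i)$ and a bound $K\leq\poly(\alpha^{-1})$, both depending only on $\varphi$, the fixed $k$-strong honest definition, $N$ and $\alpha$. We will set $\varepsilon:=\alpha/(2K^{k+1})$.

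Given $\nu$ generically stable over $M$, take $\mu(x_{\neq k+1}):=\nu^{(k)}$, so that $\omega=\nu\otimes\nu^{(k)}$. Since all $x_i$ have the same length, $\omega$ is the $(k+1)$-fold product $\nu^{(k+1)}$, up to the ordering of the coordinates. Theorem \ref{kSHDreglemma} does not require $\mu$ to be invariant, so this choice is legitimate.

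We must check that this ordering agrees with the one used in the hypothesis $\nu^{(k+1)}(\varphi)\geq\alpha$. For generically stable measures in NIP theories, products commute (\cite[Section 7.5]{NIPguide}), so $\nu\otimes\nu^{(k)}=\nu^{(k+1)}$ after permuting variables. If one prefers to avoid this fact, one can instead state the hypothesis with respect to $\omega$. This identification is the only delicate step, and I expect it to be harmless.

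The theorem gives partitions $\mathcal{P}_i$ of size at most $K$, whose parts are of the form $\theta_i(x_{\neq i},c_i)$ with $c_i\in M$. The cells $P_1\wedge\cdots\wedge P_{k+1}$ partition $\monster^x$ into at most $K^{k+1}$ pieces. By part (ii) of the theorem, the non-homogeneous cells have total measure at most $\delta=\alpha/2$. Hence the homogeneous cells that meet $\varphi$, each of which is therefore contained in $\varphi$, carry $\omega$-measure at least
\[\omega(\varphi)-\alpha/2\geq\alpha/2.\]
By pigeonhole, one such cell has measure at least $\alpha/(2K^{k+1})=\varepsilon$. Its parameters $c_1,\dots,c_{k+1}\in M$ witness the conclusion.
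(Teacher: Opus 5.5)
Your proposal is correct and is essentially the paper's proof. You apply Theorem \ref{kSHDreglemma} with $\delta=\alpha/2$ and $\mu=\nu^{(k)}$, observe that the homogeneous cells contained in $\varphi$ carry measure at least $\alpha-\delta=\alpha/2$, and pigeonhole over the at most $K^{k+1}$ cells to get $\varepsilon=\alpha/(2K^{k+1})$. Your remark that $\nu\otimes\nu^{(k)}$ coincides with $\nu^{(k+1)}$, because generically stable measures commute in NIP, spells out an identification the paper uses without comment.
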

    \begin{proof}
        Applying Theorem \ref{kSHDreglemma} with $\delta=\alpha/2$, we have uniformly $M$-definable partitions $\mathcal{P}_1, ..., \mathcal{P}_{k+1}$ with $|\mathcal{P}_i|\leq K\leq \text{poly}_{\varphi,\psi_1, ..., \psi_{k+1},N}(\delta^{-1})$ for all $i\in [k+1]$, such that
        \[\sum \nu^{(k+1)}(P_1\wedge\cdots \wedge P_{k+1})\geq \alpha-\delta= \alpha/2,\]
        where the sum ranges over all $(P_1, ..., P_{k+1})\in \mathcal{P}_1\times\cdots\times \mathcal{P}_{k+1}$ such that $P_1\wedge \cdots \wedge P_{k+1}\subseteq \varphi$. One of these tuples $(P_1, ..., P_{k+1})$ is such that
        \[\nu^{(k+1)}(P_1\wedge\cdots\wedge P_{k+1})\geq \frac{\alpha}{2|\mathcal{P}_1|\cdots|\mathcal{P}_{k+1}|}\geq \frac{\alpha}{2K^{k+1}}.\qedhere\]
    \end{proof}
\subsection{Main result}
In this subsection, we prove a more general version of Theorem \ref{kSHDreglemma}. Throughout this subsection, we fix an NIP $L$-theory $T$ and models $M,\monster\models T$ with $\monster$ sufficiently saturated and $M$ small.

We can make Theorem \ref{kSHDreglemma} uniform, in the sense that if $\varphi(x_1, ..., x_{k+1})=\varphi'(x_1, ..., x_{k+1}, e)$ for some $e\in M$, then $\theta_i$ and $K$ can be chosen independently of $e$. The following theorem is the most general formulation of our regularity lemma in this paper.

\begin{theorem}\label{kSHDreglemmauniform}
    ($T$ is NIP.) Let $\varphi'(x_1, ..., x_k; (x_{k+1}, u))\in L$ have $k$-strong honest definition $(\psi_1,$ ..., $\psi_{k+1})$ of degree $N$. For all $\delta \in (0,1]$, there are $\theta_i(x_{\neq i},z_i)\in L$ for $i\in [k+1]$ and a natural number $K\leq \text{poly}_{\varphi',\psi_1, ..., \psi_{k+1}, N}(\delta^{-1})$, such that the following holds.

    Let $\varphi(x_1, ..., x_{k+1}):=\varphi'(x_1, ..., x_k; (x_{k+1}, e))$ for some $e\in M$. Let $\mu(x_{\neq k+1})$ and $\nu(x_{k+1})$ be Keisler measures, with $\nu(x_{k+1})$ generically stable over $M$, and let $\omega(x):=\nu(x_{k+1})\otimes\mu(x_{\neq k+1})$. Then there are partitions $\mathcal{P}_i$ of $\monster^{x_{\neq i}}$ for $i\in [k+1]$, each of size at most $K$, such that:
    \begin{enumerate}[(i)]
        \item for all $i\in [k+1]$ and $P_i\in\mathcal{P}_i$, there is $c_i\in M^{z_i}$ such that $P_i=\theta_i(x_{\neq i},c_i)$;
        \item $\sum \omega(P_1\wedge \cdots\wedge P_{k+1})\geq 1-\delta$, where the sum ranges over all $(P_1, ..., P_{k+1})\in\mathcal{P}_1\times\cdots\times \mathcal{P}_{k+1}$ such that $P_1\wedge \cdots \wedge P_{k+1}$ is $\varphi$-homogeneous.
    \end{enumerate}
\end{theorem}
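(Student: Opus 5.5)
We rerun the proof of Theorem \ref{kSHDreglemma}, carrying the parameter $e$ as part of the last variable block and checking that every choice depends only on $\varphi'$, $\psi_1, \dots, \psi_{k+1}$, $N$ and $\delta$, never on $e$. The $k$-strong honest definition of $\varphi'$ has the form $(\psi_i(x_{\neq i},(x_{k+1},u),z_i): i\in[k])^\frown(\psi_{k+1}(x_{\neq k+1},z_{k+1}))$, with $(x_{k+1},u)$ playing the role of $y$. For a fixed $e$ we work with the cells
\[\gamma_c:=\bigwedge_{i=1}^k\psi_i(x_{\neq i},(x_{k+1},e),c_i)\wedge\psi_{k+1}(x_{\neq k+1},c_{k+1}).\]
The sets $F_{a,c^{(1)},\dots,c^{(N)}}$ are defined as before, with $\varphi'(x_{\neq k+1};(b,e))$ in place of $\varphi(x_{\neq k+1};b)$.

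The first step is an $e$-uniform version of Remark \ref{rmkcompletecell}. Given $a$ and a finite $B$ with $e\in B$ and $|B|\geq 2$, we apply Definition \ref{defnkSHDlegit} to $a$ and $B$. This gives $c^{(1)},\dots,c^{(N)}\in B$ such that every $(b,e)$ with $b\in B$ is handled by some $c^{(j)}$. In particular every $b\in B^{x_{k+1}}$ lies in $F_{a,c^{(1)},\dots,c^{(N)}}$.

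The second step is an $e$-uniform cutting lemma, in place of Proposition \ref{cuttinglemma}. We apply Theorem \ref{epsilonapprox} once, to the family
\[\mathcal{F}':=\{F_{a,c^{(1)},\dots,c^{(N)},e}: a,c^{(j)},e\in\monster\},\]
where $e$ is now an additional parameter variable. This family is uniformly definable by a single formula built from $\varphi'$ and the $\psi_i$. Its VC-dimension therefore depends only on $\varphi',\psi_1,\dots,\psi_{k+1},N$, and so does the sample size $|S|=O(\delta^{-2}\log 2\delta^{-1})$. We then take $B$ to consist of the coordinates of $S$, together with $e$ and one extra point so that $|B|\geq 2$. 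Then $|B|=O_{\varphi',\psi,N}(\delta^{-3})$, and $B$-completeness with respect to $a$ gives $\nu(F)\geq 1-\delta$ exactly as before.

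The remaining steps follow the proof of Theorem \ref{kSHDreglemma} verbatim. We define $D^\sigma_\gamma$ for $\gamma\in\mathcal{G}_B$, set $G=\bigvee_\gamma\gamma\wedge D_\gamma$, and show $\omega(G)\geq 1-\delta$ fibrewise in $a$. We then let $\mathcal{P}_1$ be the Boolean atoms of $\{\psi_1(x_{\neq1},(x_{k+1},e),c_1):c_1\in B\}\cup\{D^\sigma_\gamma\}$, and each other $\mathcal{P}_i$ the atoms of the corresponding $\psi_i$-instances; note that $\mathcal{P}_{k+1}$ does not involve $e$ at all.

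For uniform definability we put all parameters, including $e$, into the tuple $c_i\in M^{z_i}$. The $\theta_i$ are then the formulas describing an atom, indexed by which of at most $|B|^{O(1)}$ instances are taken positively or negatively. This has a bounded number of syntactic shapes, and we code them into a single formula by the usual equality-coding trick of Theorem \ref{nonuniformkSHD}, which uses $|B|\geq2$.

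The main obstacle is the polynomial bound on $|\mathcal{P}_i|$ in (\ref{eqntypecounting}). The number of atoms must be bounded polynomially in $|B|$ with degree independent of $e$. We would use Sauer--Shelah applied to the uniform formulas $\psi_i(x_{\neq i},(x_{k+1},u);z_i)$ and $D^\sigma(x_{k+1};z,u)$, with $u$ treated as part of the parameter. This works because the $D^\sigma_\gamma$ are uniformly definable in $(c,e)$ by one formula built from $\varphi'$ and the $\psi_i$, so its dual VC-density depends only on that formula. That gives $K\leq\poly_{\varphi',\psi_1,\dots,\psi_{k+1},N}(\delta^{-1})$.
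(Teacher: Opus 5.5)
Your argument is correct, but it takes a different route from the paper. You reopen the proof of Theorem \ref{kSHDreglemma} and check by hand that each ingredient is uniform in $e$:
\begin{enumerate}[(i)]
    \item the $k$-strong honest definition is applied to a finite $B\ni e$;
    \item the $\varepsilon$-approximation is applied once to the family $F_{a,c^{(1)},\dots,c^{(N)},e}$, with $e$ as a parameter variable;
    \item Sauer--Shelah bounds the atoms, with $u$ absorbed into the parameters.
\end{enumerate}

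The paper instead uses Theorem \ref{kSHDreglemma} as a black box. It pushes $\nu$ forward along $b\mapsto(b,e)$ to the measure $\nu'(x_{k+1},u)$ defined by $\nu'(\chi(x_{k+1},u)):=\nu(\chi(x_{k+1},e))$. It notes that $\nu'$ is generically stable over $M$: take the $\varepsilon$-approximating points to be $(a_i,e)$, which lie in $M$ since $e\in M$. It then applies Theorem \ref{kSHDreglemma} to the parameter-free formula $\varphi'$ with $\mu$ and $\nu'$. Since $\omega'=\nu'\otimes\mu$ is concentrated on $u=e$, slicing the resulting partitions at $u=e$ gives the required ones, with $e$ absorbed into the parameters. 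The uniformity of $\theta_i$ and $K$ in $e$ is then automatic, because they come from a single application to $\varphi'$.

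The paper's reduction is much shorter and avoids re-verifying every step. In effect, your insertion of $e$ into $B$ is exactly what the sample points $(a_i,e)$ of $\nu'$ do automatically in the cutting lemma. Your version is self-contained and produces partitions of $\monster^{x_{\neq i}}$ directly, without the slicing step that the paper leaves implicit. It also makes explicit where each uniformity estimate comes from.
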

\begin{proof}
    Let $\nu'(x_{k+1},u)$ be the Keisler measure given by $\nu'(\chi(x_{k+1},u)):=\nu(\chi(x_{k+1}, e))$ for all $\chi(x_{k+1}, u)\in L(\monster)$. Using Theorem \ref{epsilonapprox}, it is an easy exercise to show that $\nu'$ is generically stable over $M$. Applying Theorem \ref{kSHDreglemma} to $\varphi'$ with the Keisler measures $\mu(x_{\neq k+1})$ and $\nu'(x_{k+1},u)$, we are done.
\end{proof}
We would like to remove the NIP assumption from our main theorem, possibly at the cost of weakening the statement.
\begin{problem}\label{qnnoNIP2}
    Suppose $T$ is not necessarily NIP (perhaps $T$ is $\text{NIP}_k$). Must Theorem \ref{kSHDreglemmauniform} hold? If not, can one still obtain a (weaker) homogeneous regularity lemma for $\varphi$?
\end{problem}
The NIP assumption was used in the proof of Theorem \ref{kSHDreglemmauniform} in the following instances:
\begin{enumerate}[(i)]
    \item to define the product of Keisler measures (Definition \ref{defnproductmeasure});
    \item to invoke an $\varepsilon$-approximation result (Theorem \ref{epsilonapprox});
    \item to ensure a polynomial bound on the size of the partitions, via a type-counting argument (equation (\ref{eqntypecounting}) in the proof of Theorem \ref{kSHDreglemma}).
\end{enumerate}
Of these, instances (i) and (iii) are relatively easy to circumvent (at a small cost) --- the former by restricting our attention to finite counting measures, and the latter by accepting an inferior bound on the size of the partitions. To circumvent instance (ii) would be a more substantial undertaking; one possible approach is to generalise the theory of $\varepsilon$-approximations (and, more generally, of sampling) in the NIP setting to the $\text{NIP}_k$ setting.

We record the special case of Theorem \ref{kSHDreglemmauniform} where $|x_1|=\cdots=|x_{k+1}|=:d$, and we state the regularity lemma for a hypergraph defined on $\bigsqcup_{i=1}^{k+1}V$ for some $\monster$-definable $V\subseteq \monster^d$.
\begin{cor}\label{kSHDregcor}
    ($T$ is NIP.) Let $\varphi'(x_1, ..., x_k; (x_{k+1}, u))\in L$ have $k$-strong honest definition $(\psi_1, ..., \psi_{k+1})$ of degree $N$, and suppose $|x_1|=\cdots=|x_{k+1}|=:d$. For all $\delta \in (0,1]$, there are $\theta(x_1, ...,  x_k, z)\in L$ and a natural number $K\leq \text{poly}_{\varphi',\psi_1, ..., \psi_{k+1}, N}(\delta^{-1})$ such that the following holds.

    Let $\varphi(x_1, ..., x_{k+1}):=\varphi'(x_1, ..., x_k; (x_{k+1}, e))$ for some $e\in M$. Let $V\subseteq \monster^d$ be $\monster$-definable, and let $\nu(x_{k+1})$ be a global measure, generically stable over $M$. Then there is a partition $\mathcal{P}$ of $V^k$ of size at most $K$ such that:
    \begin{enumerate}[(i)]
        \item for all $P\in\mathcal{P}$, there is $c\in M^z$ such that $P=\theta(x_1, ..., x_k, c)\cap V^k$;
        \item $\sum \nu^{(k+1)}(P_1\wedge \cdots\wedge P_{k+1})\geq (1-\delta) \nu(V)^{k+1}$, where the sum ranges over all $(P_1, ..., P_{k+1})\in \mathcal{P}^{k+1}$ such that $P_1\wedge \cdots \wedge P_{k+1}$ is $\varphi$-homogeneous.
    \end{enumerate}
\end{cor}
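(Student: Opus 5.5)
We reduce Corollary \ref{kSHDregcor} to Theorem \ref{kSHDreglemmauniform} in three steps: relativise the measure to $V$, obtain partitions $\mathcal{P}_1,\dots,\mathcal{P}_{k+1}$, and merge them into a single partition of $V^k$ by identifying all the spaces $\monster^{x_{\neq i}}$ with $\monster^{dk}$ (possible since $|x_1|=\cdots=|x_{k+1}|=d$).

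\emph{Step 1: relativisation.} If $\nu(V)=0$, conclusion (ii) is trivial and any single-part partition works. So assume $\nu(V)>0$. By Proposition \ref{relativisation}, $\nu|_V$ is generically stable over $M$. Hence so is its $k$-fold product $\mu:=(\nu|_V)^{(k)}$; in any case Theorem \ref{kSHDreglemmauniform} only needs $\mu$ to be a Keisler measure. Applying Theorem \ref{kSHDreglemmauniform} with $\nu|_V$ in place of $\nu$, and using associativity, $\omega=\nu|_V\otimes(\nu|_V)^{(k)}=(\nu|_V)^{(k+1)}$. This yields formulas $\theta_i(x_{\neq i},z_i)$, a bound $K_0\le\poly(\delta^{-1})$, and $M$-definable partitions $\mathcal{P}_i$ of $\monster^{x_{\neq i}}$ with $|\mathcal{P}_i|\le K_0$. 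The homogeneous cells then have $(\nu|_V)^{(k+1)}$-measure at least $1-\delta$.

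\emph{Step 2: merging the partitions.} Let $\mathcal{P}$ be the set of nonempty sets of the form $P_1\cap\cdots\cap P_{k+1}\cap V^k$ with $P_i\in\mathcal{P}_i$, after the identification above. Its size is at most $K_0^{k+1}$, still polynomial in $\delta^{-1}$.

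For uniform definability, let $\theta(x_1,\dots,x_k,z)$ with $z=(z_1,\dots,z_{k+1},w)$ be
\[\bigwedge_i\theta_i(x_1,\dots,x_k,z_i)\wedge\chi(x_1,\dots,x_k,w),\]
where $V^k=\chi(\cdot,e')$. There is a technical point: $V$ is only $\monster$-definable, while (i) asks for parameters in $M$. I would either assume $V$ is $M$-definable, as in Theorem \ref{nonpartitemainresult}, or note that intersecting with $V^k$ is exactly what the phrase ``$\cap V^k$'' in (i) provides, so no parameter for $V$ is needed at all. The latter reading is the correct one, and $\theta$ is just $\bigwedge_i\theta_i$.

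\emph{Step 3: transferring homogeneity.} Every cylinder $Q_1\wedge\cdots\wedge Q_{k+1}$ with $Q_j\in\mathcal{P}$, where $Q_j\subseteq P^{(j)}_j$, is contained in $P^{(1)}_1\wedge\cdots\wedge P^{(k+1)}_{k+1}$. So if the latter is homogeneous, so is the former, since homogeneity passes to subsets. Within $V^{k+1}$, the cylinders built from $\mathcal{P}$ refine the cylinders built from the $\mathcal{P}_i$. Therefore the $\nu^{(k+1)}$-measure of the homogeneous $\mathcal{P}$-cylinders is at least the $\nu^{(k+1)}$-measure of $V^{k+1}$ intersected with the homogeneous $\mathcal{P}_i$-cylinders.

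It remains to compare measures. Because everything lies inside $V^{k+1}$, we have $\nu^{(k+1)}(X\cap V^{k+1})=\nu(V)^{k+1}(\nu|_V)^{(k+1)}(X)$. This follows by induction on $k$ from the integral definition of the product: relativising each factor rescales by $\nu(V)$. Combining, the homogeneous $\mathcal{P}$-cylinders have $\nu^{(k+1)}$-measure at least $(1-\delta)\nu(V)^{k+1}$, which is (ii).

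The main obstacle is the bookkeeping in Steps 2 and 3: checking that cylinders built from the common refinement $\mathcal{P}$ refine each $\mathcal{P}_i$ in the correct coordinate slot, and verifying the product-of-relativisations identity. Everything else is direct.
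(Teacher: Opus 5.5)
Your proposal is correct and follows the paper's proof: relativise via Proposition \ref{relativisation}, apply Theorem \ref{kSHDreglemmauniform} to $\nu|_V$ and $(\nu|_V)^{(k)}$, and code the $\theta_i$ into a single formula, with you making explicit the common refinement, the slot-by-slot containment of cylinders, and the rescaling $\nu^{(k+1)}(X\cap V^{k+1})=\nu(V)^{k+1}(\nu|_V)^{(k+1)}(X)$ that the paper leaves implicit. Two small touch-ups: identifying $\omega=\nu|_V(x_{k+1})\otimes(\nu|_V)^{(k)}(x_1,\dots,x_k)$ with $(\nu|_V)^{(k+1)}(x_1,\dots,x_{k+1})$ needs the standard NIP fact that a generically stable measure commutes with invariant measures, not just associativity; and when $\nu(V)=0$ the partition $\{V^k\}$ need not satisfy (i) for your $\theta$, so instead restrict to $V^k$ the common refinement of partitions given by the theorem (for instance applied to $\nu$ and $\nu^{(k)}$).
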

\begin{proof}
    Without loss of generality, suppose $\nu(V)>0$. By Proposition \ref{relativisation}, we have that $\nu|_V$, the relativisation of $\nu$ to $V$, is generically stable over $M$. Applying Theorem \ref{kSHDreglemmauniform} with $\nu|_V(x_{k+1})$ and $\mu(x_1, ..., x_k):=(\nu|_V)^{(k)}(x_1, ..., x_k)$, we are done. (Note that the formulas $\theta_1, ..., \theta_{k+1}$ given by Theorem \ref{kSHDreglemmauniform} can easily be coded into one formula $\theta$.)
\end{proof}
Since finite counting measures are generically stable (see the discussion after Theorem \ref{epsilonapprox}), we have the following statement for finite hypergraphs. We formulate this in a manner more consistent with our earlier combinatorial discourse.
\begin{cor}\label{kSHDregcorfinite}
    ($T$ is NIP.) Let $\varphi'(x_1, ..., x_k; (x_{k+1}, u))\in L$ have $k$-strong honest definition $(\psi_1, ..., \psi_{k+1})$ of degree $N$, and suppose $|x_1|=\cdots=|x_{k+1}|=:d$. For all $\delta \in (0,1]$, there are $\theta(x_1, ...,  x_k, z)\in L$ and a natural number $K\leq \text{poly}_{\varphi',\psi_1, ..., \psi_{k+1},N}(\delta^{-1})$ such that the following holds.

    Let $\varphi(x_1, ..., x_{k+1}):=\varphi'(x_1, ..., x_k; (x_{k+1}, e))$ for some $e\in M$, and let $V\subseteq M^d$ be finite. Then there is a partition $\mathcal{P}$ of $V^k$ of size at most $K$ such that:
    \begin{enumerate}[(i)]
        \item for all $P\in\mathcal{P}$, there is $c\in M^z$ such that $P=\theta(x_1, ..., x_k, c)\cap V^k$;
        \item the induced partition $\mathcal{D}$ of $V^{k+1}$, given by
        \begin{align*}
        &\left\lbrace P_1\wedge\cdots\wedge P_{k+1}: P_i\in\mathcal{P}\text{ for all }i\in [k+1]\right\rbrace\\
            &=\left\lbrace \left\lbrace w=(w_1, ..., w_{k+1})\in V^{k+1}: w_{\neq i}\in P_i\text{ for all } i\in [k+1]\right\rbrace: P_1, ..., P_{k+1}\in\mathcal{P}\right\rbrace,
        \end{align*}
    is such that $\sum_{D\in\mathcal{D}\text{ }\varphi\text{-homogeneous}} |D|\geq (1-\delta) |V|^{k+1}$.
    \end{enumerate}
\end{cor}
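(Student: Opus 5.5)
The plan is to deduce this from Corollary \ref{kSHDregcor} by specialising to the normalised counting measure on the finite set $V$. First, use $\theta$ and $K$ given by Corollary \ref{kSHDregcor} for $\varphi'$, $(\psi_1, ..., \psi_{k+1})$, $N$ and $\delta$. These depend only on $\varphi',\psi_i,N,\delta$, which gives the required uniformity and the polynomial bound. Given $e\in M$ and a finite $V\subseteq M^d$ (we may assume $V\neq\emptyset$), view $V$ as an $\monster$-definable set (a finite disjunction of equalities with parameters from $M$). Let $\nu(x_{k+1})$ be the global Keisler measure $\nu(\chi(x_{k+1})):=|\chi(\monster)\cap V|/|V|$, i.e.\ the average of the realised types of the elements of $V$.

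The one point needing justification is that $\nu$ is generically stable over $M$. By the discussion after Theorem \ref{epsilonapprox}, finitely supported counting measures are generically stable over any small model containing their support, and $V\subseteq M$. If one wants to check this directly via Theorem \ref{epsilonapprox}(ii), take $a_1, ..., a_n$ to be the elements of $V$. Then $\Av(a_1, ..., a_n;\varphi(x;b))=\nu(\varphi(x;b))$ exactly, and $\nu$ is $M$-invariant since $V\subseteq M$. Here $\nu(V)=1$.

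Next, compute $\nu^{(k+1)}$. By Lemma \ref{fslemma}(i) and Definition \ref{defnproductmeasure}, the product of realised types is the type of the tuple. Integrating over the counting measure, $\nu^{(k+1)}$ is the normalised counting measure on $V^{k+1}$. That is, $\nu^{(k+1)}(X)=|X\cap V^{k+1}|/|V|^{k+1}$ for definable $X$; this follows by induction on $k$ directly from \eqref{eqnmeasureproduct}, since $\lambda|_N$ is then a finite average of point masses.

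Now apply Corollary \ref{kSHDregcor} to obtain a partition $\mathcal{P}$ of $V^k$ with $|\mathcal{P}|\leq K$ and each $P=\theta(x_1, ..., x_k,c)\cap V^k$ with $c\in M^z$. Each part may be intersected with the finite set $V^k$, keeping only nonempty parts. Each $P_1\wedge\cdots\wedge P_{k+1}$ lies inside $V^{k+1}$, and these sets are exactly the members of $\mathcal{D}$. They partition $V^{k+1}$, since each $w\in V^{k+1}$ has each $w_{\neq i}$ in a unique $P_i$. Homogeneity of $P_1\wedge\cdots\wedge P_{k+1}$ in $\monster$ is the same as homogeneity of the corresponding finite set $D$, because $D$ is that set. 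Multiplying the inequality in Corollary \ref{kSHDregcor}(ii) by $|V|^{k+1}$ then gives $\sum_{D\text{ homogeneous}}|D|\geq(1-\delta)|V|^{k+1}$.

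The main obstacle is bookkeeping rather than mathematics: correctly identifying $\nu^{(k+1)}$ with counting measure, and checking generic stability over $M$. Both are routine given the cited facts.
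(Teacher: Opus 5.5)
Your proposal is correct and follows the paper's route: the paper obtains this corollary from Corollary \ref{kSHDregcor} by applying it to the normalised counting measure on $V$, which is generically stable over $M$ because $V\subseteq M$. You have written out the routine checks that the paper leaves implicit, namely generic stability via Theorem \ref{epsilonapprox} and the identification of $\nu^{(k+1)}$ with counting measure on $V^{k+1}$.
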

\subsection{A version for counting}\label{subseccounting}
Throughout this subsection, fix an NIP $L$-theory $T$ and models $M,\monster\models T$ with $\monster$ sufficiently saturated and $M$ small.

As mentioned in Section \ref{sechypreg}, hypergraph regularity lemmas are often used in conjunction with counting lemmas. The regularity lemmas we have proven for $(k+1)$-graphs $(\bigsqcup_{i=1}^{k+1}V, E)$ give a partition of $V^k$ that induces a partition of $V^{k+1}$ into simplicial complexes, most of which are homogeneous. However, in order to have an associated counting lemma, the parts of the partition of $V^k$ should themselves be quasirandom as $k$-graphs. In this subsection, we describe how we can refine our regularity lemmas \textit{a posteriori} to achieve this. Since we expect this to be mainly of interest to finite combinatorialists, we will only focus on finite hypergraphs, but our methods also apply in the context of Keisler measures considered above (as long as $\omega(x_1, ..., x_{k+1})$ is given by a product of generically stable measures $\mu_1(x_1)$, ..., $\mu_{k+1}(x_{k+1})$). To simplify the notation, we will specialise to the case where $k=2$ (thus working towards a regularity lemma for 3-graphs), which already contains all the important ideas.

We appeal to the following regularity lemma for graphs definable in an NIP theory from the literature. A version of this was first proven by Alon, Fischer, and Newman \cite{alonfischernewman} and Lov\'asz and Szegedy \cite{lovaszszegedy} for finite graphs of bounded VC-dimension. Of course, their result readily applies to graphs definable in an NIP theory, but the following statement, due to Chernikov and Starchenko \cite{definablenipregularity}, contains extra definable data.
\patchcmd{\thmhead}{(#3)}{#3}{}{}
\begin{theorem}[\cite{definablenipregularity}]\label{nipregularity}
    ($T$ is NIP.) Let $\theta(x_1, x_2, z)\in L$ with $|x_1|=|x_2|=d$. For all $\delta \in (0,1]$, there is $\eta(x_1,w)\in L$ and a natural number $K\leq \text{poly}_\theta(\delta^{-1})$ such that the following holds.

    Let $\theta_e(x_1, x_2):=\theta(x_1, x_2, e)$ for some $e\in M$, and let $V\subseteq M^d$ be finite. Then there is a partition $\mathcal{Q}$ of $V$ of size at most $K$ such that:
    \begin{enumerate}[(i)]
        \item for all $Q\in\mathcal{Q}$, there is $r\in M^w$ such that $Q=\eta(x_1,r)\cap V$;
        \item $\sum |Q_1\times Q_2|\geq (1-\delta)|V|^2$, where the sum ranges over all $(Q_1, Q_2)\in \mathcal{Q}^2$ such that the density of $\theta_e(Q_1, Q_2)$ is $\delta$-close to $0$ or $1$, that is,
        \begin{equation}\label{eqnepsilonhomogeneity}
            \frac{\abs{\theta_e\cap Q_1\times Q_2}}{\abs{Q_1\times Q_2}}\in [0,\delta]\cup [1-\delta, 1].
        \end{equation}
    \end{enumerate}    
\end{theorem}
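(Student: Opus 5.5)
The statement is the definable NIP regularity lemma of Chernikov and Starchenko. It is quoted from the literature, so the honest plan is to reproduce the standard argument. I would prove it in the Keisler-measure form, with $\mu$ the normalised counting measure on $V$ (generically stable, as noted after Theorem \ref{epsilonapprox}), and then specialise to finite $V$.

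\textbf{Step 1: a definable $\varepsilon$-net on the neighbourhood family.} Consider the family of fibres $\{\theta(x_1,b,e): b\in M^d, e\in M\}$, which has VC-dimension bounded in terms of $\theta$. By the Haussler packing lemma (or, alternatively, by Theorem \ref{epsilonapprox} applied to symmetric differences $\theta(x_1,b,e)\,\triangle\,\theta(x_1,b',e)$), there is a set $b_1,\dots,b_m\in V$ with $m\le\poly_\theta(\delta^{-1})$ such that every $b\in V$ has some $b_j$ with $\mu\bigl(\theta(V,b,e)\triangle\theta(V,b_j,e)\bigr)\le\delta^{c}$, for a suitable constant $c$.

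\textbf{Step 2: define the partition.} Let $\mathcal{Q}$ be the Boolean atoms of $\{\theta(x_1,b_j,e)\cap V: j\le m\}$, further refined by the Voronoi-type cells $\{b\in V: b\text{ is closest to }b_j\}$ on the $x_2$-side. Since $V$ is symmetric, I would instead take common refinements by the atoms of $\theta(b_j,x_2,e)$, so that a single partition of $V$ serves both coordinates. By Sauer--Shelah, the number of atoms is polynomial in $m$, hence $K\le\poly_\theta(\delta^{-1})$. The parts are uniformly definable with parameters $r=(b_1,\dots,b_m,e)$ by a formula $\eta(x_1,w)$ depending only on $\theta$, $\delta$ and $m$. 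Choose $\eta$ to list the sign pattern, and pad $r$ so that its length is fixed.

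\textbf{Step 3: density estimate.} Fix $Q_2$ lying inside a single neighbourhood class of $b_j$. For each $x_1\in Q_1$, the edge relation to $b\in Q_2$ agrees with that to $b_j$ except on a set of $b$ of small total measure. Since $Q_1$ is an atom, all $x_1\in Q_1$ have the same value of $\theta(x_1,b_j,e)$. Averaging and applying Markov's inequality, the total measure of pairs $(Q_1,Q_2)$ on which the density is not $\delta$-close to $0$ or $1$ is at most $\delta$, provided $c$ is chosen large enough.

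The main obstacle is Step 3 on the $x_2$-side. The net controls the fibres $\theta(\cdot,b)$ only up to measure $\delta^c$ for each $b$, and the error has to be converted into an estimate summed over pairs. This is a double counting / Markov argument. I would borrow it directly from \cite{definablenipregularity}, together with the uniform-definability bookkeeping; in the paper one may simply cite that theorem.
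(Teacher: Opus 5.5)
The paper does not prove this statement; it quotes it from \cite{definablenipregularity}. Your Step 3 is fine, and it is not where the difficulty lies. Suppose $Q_1$ lies in an atom of $\{\theta(x_1,b_j,e)\}_j$ and every $b\in Q_2$ satisfies $\mu(\theta(V,b,e)\triangle\theta(V,b_j,e))\le\delta^2$. If $(Q_1,Q_2)$ is not $\delta$-almost homogeneous, it contains more than $\delta|Q_1\times Q_2|$ pairs $(a,b)$ on which $\theta(a,b,e)$ differs from $\theta(a,b_j,e)$. There are at most $\delta^2|V|^2$ such pairs in total, so $c=2$ works. (The smallness holds per $b$, not per $x_1$ as you wrote, but the count over pairs is all you need.)

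\textbf{The genuine gap is clause (i), in Step 2.} The Voronoi cell $\{b\in V: b\text{ is closest to }b_j\}$ is cut out by comparing the cardinalities $|\theta(V,b,e)\triangle\theta(V,b_i,e)|$. That is a counting condition over the finite set $V$, not a sign-pattern atom of instances of $\theta$ at $(b_1,\dots,b_m,e)$. You give no formula $\eta(x_1,w)$, depending only on $\theta$ and $\delta$ and using boundedly many parameters from $M$, that defines it.

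Your substitute, the atoms of $\{\theta(b_j,x_2,e)\}_j$, is definable but loses the hypothesis of Step 3. The $b_j$ were chosen as a net for fibres in the \emph{second} slot, so agreeing on every $\theta(b_j,x_2,e)$ says nothing about $\theta(V,b,e)$ versus $\theta(V,b',e)$. For example, on $V=[2n]$ let $\theta(x_1,x_2)$ say $x_2\le n<x_1<n+x_2$.
\begin{itemize}
\item The points $b_j\approx 1+2\delta^2 nj$ in $[1,n]$ form a legitimate net.
\item Every $\theta(b_j,x_2)$ is then empty, so $[1,n]$ lies in a single part.
\item That part has density in roughly $[1/4,3/4]$ against the parts inside $(5n/4,7n/4)$, which cover about an eighth of $V^2$, so (ii) fails for small $\delta$.
\end{itemize}
Also, neither $V$ nor $\theta$ needs to be symmetric. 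One partition serving both coordinates is just the common refinement of an $x_1$-side partition and an $x_2$-side partition of $V$.

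\textbf{The missing idea is to cut the $x_2$-side with \emph{sample} points in the first slot, not with net points.}
\begin{enumerate}
\item Apply Theorem \ref{epsilonapprox} to the counting measure on $V$ and to $\chi(x_1;x_2,x_2',z):=\theta(x_1,x_2,z)\triangle\theta(x_1,x_2',z)$, with $\varepsilon=\delta^2$. This gives $a_1,\dots,a_n\in M$ with $n=O_\theta(\delta^{-4}\log(2\delta^{-1}))$, independent of $V$ and $e$.
\item If $b,b'$ lie in the same atom of $\{\theta(a_l,x_2,e)\}_{l\le n}$, then $\Av(a_1,\dots,a_n;\chi(x_1;b,b',e))=0$, hence $\mu(\theta(V,b,e)\triangle\theta(V,b',e))<\delta^2$.
\item By Sauer--Shelah there are $s\le\poly_\theta(n)$ such atoms meeting $V$. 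Pick representatives $b_1,\dots,b_s\in V$.
\item Let $\mathcal{Q}$ be the atoms on $V$ of $\{\theta(a_l,x,e)\}_{l\le n}\cup\{\theta(x,b_t,e)\}_{t\le s}$.
\end{enumerate}
Your Step 3 count then applies verbatim, and $|\mathcal{Q}|\le\poly_\theta(\delta^{-1})$. Every part is now genuinely a sign-pattern atom over $(a_1,\dots,a_n,b_1,\dots,b_s,e)$, so a single $\eta(x_1,w)$ works after padding the parameters and coding the pattern. This route also makes Haussler's lemma unnecessary. Alternatively, you can keep your net and define its cells by the approximate distance $\#\{l:\theta(a_l,b,e)\text{ and }\theta(a_l,b_j,e)\text{ differ}\}$, which is first-order over $(a_1,\dots,a_n,b_1,\dots,b_m,e)$.
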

\patchcmd{\thmhead}{#3}{(#3)}{}{}
The notion of quasirandomness captured by (\ref{eqnepsilonhomogeneity}) is very strong (although weaker than homogeneity); where this holds, we will say that $Q_1\times Q_2$ is \textit{$\delta$-almost $\theta_e$-homogeneous}.
\begin{remark}
    Theorem \ref{nipregularity} is a regularity lemma for the graph $\theta_e(x_1, x_2)$. The analogous result for hypergraphs also holds, as proven in \cite{definablenipregularity}, and can be used instead of Theorem \ref{nipregularity} in the case where $k>2$.
\end{remark}

For the rest of this subsection, we put ourselves in the context of Corollary \ref{kSHDregcorfinite}, fixing the following.
\begin{itemize}
    \item A formula $\varphi(x_1, x_2; x_3)\in L$ with $|x_1|=|x_2|=|x_3|=:d$; write $x:=(x_1, x_2, x_3)$.
    \item A $2$-strong honest definition $(\psi_i(x_{\neq i},z_i): i\in [3])$ for $\varphi$ of degree $N$.
\end{itemize}
Note that we have dropped the parameter $u$ from Corollary \ref{kSHDregcorfinite} for simplicity.

In Corollary \ref{kSHDregcorfinite}, we have a partition $\mathcal{P}$ of $V^2$, inducing simplicial complexes of the form
\[P_1\wedge P_2\wedge P_3=\{w=(w_1, w_2,w_3)\in V^3: w_{\neq i}\in P_i\text{ for all }i\in [3]\}\]
for $P_1, P_2, P_3\in\mathcal{P}$, such that most of these simplicial complexes are $\varphi$-homogeneous. Rewriting $V^3$ as $V_1\times V_2\times V_3$, $P_1\wedge P_2 \wedge P_3$ is the simplicial complex comprising the triangles in $P_1(V_2, V_3)\cup P_2(V_1, V_3)\cup P_3(V_1, V_2)$, where $P_1$ is viewed as a graph on $V_2\times V_3$, and so on.

In order to make this regularity lemma amenable to counting lemmas, we refine these simplicial complexes by intersecting them with boxes $Q_1\times Q_2\times Q_3$ obtained using Theorem \ref{nipregularity}. The resulting simplicial complex $(P_1\wedge P_2 \wedge P_3)\cap (Q_1\times Q_2\times Q_3)$ then comprises the triangles in $P_1(Q_2, Q_3)\cup P_2(Q_1, Q_3)\cup P_3(Q_1, Q_2)$. Our new notion of quasirandomness for these simplicial complexes is as follows.

\begin{defn}
    For $i\in [3]$, let $P_i\subseteq \monster^{2d}$ and $Q_i\subseteq \monster^d$ be finite. For $\delta>0$, say that the simplicial complex $(P_1\wedge P_2 \wedge P_3)\cap (Q_1\times Q_2\times Q_3)$ is \textit{$(\varphi,\delta)$-homogeneous} if it is $\varphi$-homogeneous and, for all $i\in [3]$, $\prod_{j\in [3], j\neq i}Q_j$ is $\delta$-almost $P_i$-homogeneous.
\end{defn}

Combining Corollary \ref{kSHDregcorfinite} with Theorem \ref{nipregularity}, we obtain the following.
\begin{theorem}
    ($T$ is NIP.) For all $\delta\in (0,1]$, there are $\theta(x_1, x_2, z), \eta(x_1, w)\in L$ and a natural number $K\leq \text{poly}_{\varphi,\psi_1, \psi_2, \psi_3,N}(\delta^{-1})$ such that the following holds.

    Let $V\subseteq M^d$ be finite. Then there are partitions $\mathcal{P}$ of $V^2$ and $\mathcal{Q}$ of $V$ of size at most $K$ such that:
    \begin{enumerate}[(i)]
        \item for all $P\in\mathcal{P}$, there is $c\in M^z$ such that $P=\theta(x_1, x_2, c)\cap V^2$;
        \item for all $Q\in\mathcal{Q}$, there is $r\in M^w$ such that $Q=\eta(x_1, r)\cap V$;
        \item the induced partition $\mathcal{D}$ of $V^3$, given by
        \begin{align*}
            &\left\lbrace (P_1\wedge P_2 \wedge P_3)\cap (Q_1\times Q_2\times Q_3): P_i\in\mathcal{P}, Q_i\in\mathcal{Q}\text{ for all }i\in [3]\right\rbrace\\
            &=\left\lbrace \left\lbrace w=(w_1, w_2, w_3)\in V^3: w_{\neq i}\in P_i, w_i\in Q_i\text{ for all } i\in [3]\right\rbrace: P_1, P_2, P_3\in\mathcal{P}, Q_1, Q_2, Q_3\in\mathcal{Q}\right\rbrace,
            \end{align*}
    is such that $\sum_{D\in\mathcal{D}\text{ }(\varphi,\delta)\text{-homogeneous}} |D|\geq (1-\delta) |V|^3$.
    \end{enumerate}
\end{theorem}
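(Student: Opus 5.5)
We combine Corollary \ref{kSHDregcorfinite} (with $k=2$) and Theorem \ref{nipregularity}, applying each with error $\delta/7$ or so, and then take a union bound over the bad parts. First, apply Corollary \ref{kSHDregcorfinite} to $\varphi$ with error $\delta/4$. This gives a formula $\theta(x_1,x_2,z)$ and a bound $K_1\leq\poly(\delta^{-1})$. For each finite $V$ we get a partition $\mathcal{P}$ of $V^2$ of size at most $K_1$, defined by instances $\theta(x_1,x_2,c)\cap V^2$. Moreover, the $\varphi$-homogeneous complexes $P_1\wedge P_2\wedge P_3$ cover all but $(\delta/4)|V|^3$ of $V^3$. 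This already gives (i).

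Next, we need a single partition $\mathcal{Q}$ of $V$ that makes every part of $\mathcal{P}$ almost homogeneous simultaneously, including across the different coordinate pairs. The parts of $\mathcal{P}$ are the Boolean atoms of finitely many instances of $\theta$, at most $K_1$ of them. So we code the whole partition into one parameterised relation $\theta'(x_1,x_2,z')$: set $\theta'(x_1,x_2,(c,u,v)) := (\theta(x_1,x_2,c)\wedge u=v)$, or use a similar coding. The parameters then range over tuples $c_1,\dots,c_{K_1}$ together with an index selector. The difficulty is that Theorem \ref{nipregularity} handles one relation $\theta_e$ at a time, while we need up to $K_1$ relations $P\in\mathcal{P}$. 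It must also be used in both coordinate orders, since $P_i$ lives on $Q_j\times Q_{j'}$ for the various pairs; we handle the orders by applying it to $\theta(x_1,x_2,c)$ and to $\theta(x_2,x_1,c)$.

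To manage this, we apply Theorem \ref{nipregularity} separately to each $P\in\mathcal{P}$ (and each orientation), with error $\delta' := \delta/(12K_1)$. Each application gives a partition $\mathcal{Q}_P$ of size $\poly(\delta'^{-1}) = \poly(\delta^{-1})$, defined by a fixed $\eta$. We then let $\mathcal{Q}$ be the common refinement of all the $\mathcal{Q}_P$. Its size is $\poly(\delta^{-1})^{2K_1}$, which is not polynomial. This is the main obstacle.

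To keep the bound polynomial, we should not refine at all. Instead we apply the hypergraph or multi-relation form of the definable NIP regularity lemma once, to the single relation $\Theta(x_1,x_2;y)$ encoding membership in the atoms of $\mathcal{P}$ (see the Remark after Theorem \ref{nipregularity}). The relevant version of \cite{definablenipregularity} is stated for a finite family of uniformly definable relations. Alternatively, we can rerun its proof: by $\varepsilon$-approximation (Theorem \ref{epsilonapprox}) applied to the dual family of $\theta(x_1,x_2,c)$ for $c$ in a polynomial-size set, the NIP partition is again given by Boolean atoms of polynomially many instances of a fixed formula. The Sauer--Shelah type-counting bound, as in (\ref{eqntypecounting}), then keeps $|\mathcal{Q}|\leq\poly(\delta^{-1})$ with an accompanying formula $\eta$. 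With this, each pair $(Q_j,Q_{j'})$ is $\delta$-almost $P$-homogeneous for every $P$, except on a set of pairs of total measure at most $(\delta/4)|V|^2$ per orientation.

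Finally, we count the parts $D=(P_1\wedge P_2\wedge P_3)\cap(Q_1\times Q_2\times Q_3)$ that fail to be $(\varphi,\delta)$-homogeneous. Either $P_1\wedge P_2\wedge P_3$ is not $\varphi$-homogeneous, which accounts for at most $(\delta/4)|V|^3$ points. Or, for some $i$, $\prod_{j\neq i}Q_j$ fails to be $\delta$-almost $P_i$-homogeneous. In that case, summing over the third coordinate $Q_i$, the union of such $D$ for a fixed $i$ lies in $\bigcup(\text{bad } Q_j\times Q_{j'})\times V$, which has size at most $(\delta/4)|V|^3$. Summing the four contributions gives at most $\delta|V|^3$, and $K:=\max(K_1,|\mathcal{Q}|)$ is polynomial in $\delta^{-1}$.
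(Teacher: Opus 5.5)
There is a genuine gap at the step you call the main obstacle. Your proposal never actually constructs the single partition $\mathcal{Q}$, and the obstacle that made you abandon your first route is not real.

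That first route is the paper's. Apply Theorem~\ref{nipregularity} to $\theta(x_1,x_2,z)$ once for each $P\in\mathcal{P}$. One orientation suffices: $\prod_{j\neq i}Q_j$ always lists the remaining indices in increasing order, matching the coordinates of $P_i\subseteq V^2$, and Theorem~\ref{nipregularity} already controls all ordered pairs. Collect the at most $K_1K'$ parameters $r$ defining the parts of all the $\mathcal{Q}_P$ into a set $R$. Let $\mathcal{Q}$ be the set of Boolean atoms of $\{\eta(x_1,r)\cap V: r\in R\}$; this is exactly your common refinement. Your estimate $(K')^{2K_1}$ is the count for refining arbitrary partitions. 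Here, however, every set is an instance of the single formula $\eta$, whose dual VC-dimension is finite since $T$ is NIP. So by Sauer--Shelah the number of atoms is $O(|R|^{\VC^*(\eta)})$, polynomial in $|R|$. This is the same type-counting bound you invoke a few sentences later. Applied to the refinement, it gives the paper's $|\mathcal{Q}|\leq\poly(\delta^{-1})$. The atoms are defined by a fixed Boolean combination of boundedly many instances of $\eta$, which gives (ii).

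What you put in its place does not close the gap.
\begin{itemize}
\item Theorem~\ref{nipregularity}, like the hypergraph version mentioned after it, handles one relation $\theta_e$ at a time. Applying it once to a relation that ``encodes'' $\mathcal{P}$ gives almost-homogeneity of that one relation, not of every $P\in\mathcal{P}$ simultaneously.
\item Applying the hypergraph version to $\theta(x_1,x_2,z)$, with $z$ ranging over the parameters of $\mathcal{P}$, may lump several $P$ into one $z$-part. It also produces separate partitions of the $x_1$- and $x_2$-sides, which pushes you back into refining anyway.
\item The ``finite family'' version you attribute to \cite{definablenipregularity} is not among the tools the paper uses.
\item Your ``rerun its proof'' sketch explains only why the partition would be small and definable. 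It does not explain why its pairs are almost $P$-homogeneous for all $P$ at once.
\end{itemize}

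When you refine, one further step is needed, and the paper's ``Observe that'' passes over it: almost-homogeneity is not inherited by sub-boxes. For example, take $|A|=|V|/2$ and let $P$ have density $2\delta'$ on $A\times A$ and on $A\times(V\setminus A)$, and be empty elsewhere. Then $V\times V$ is $\delta'$-almost $P$-homogeneous, but after refining by $\{A,V\setminus A\}$, half of $V^2$ lies in pairs that are not.

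The fix is Markov's inequality. Take a pair from $\mathcal{Q}_P^2$ on which $P$ has density at most $\delta'$ (respectively at least $1-\delta'$). Inside it, the sub-boxes from $\mathcal{Q}^2$ with density above $\sqrt{\delta'}$ (respectively below $1-\sqrt{\delta'}$) occupy less than a $\sqrt{\delta'}$-fraction. Hence the pairs in $\mathcal{Q}^2$ that are not $\sqrt{\delta'}$-almost $P$-homogeneous have total size at most $(\delta'+\sqrt{\delta'})|V|^2\leq 2\sqrt{\delta'}|V|^2$.

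Choose $\delta':=(\delta/(12K_1))^2$, which is still polynomial. Your final union bound then goes through. It loses only a factor $K_1$, because you bound the union of the disjoint parts $D$ directly; the paper instead sums over all triples $(P_1,P_2,P_3)$ and loses $K^3$.
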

\begin{proof}
    First apply Corollary \ref{kSHDregcorfinite} to $\varphi'=\varphi$ with $\delta/2$ in place of $\delta$ to obtain $\theta(x_1, x_2, z)\in L$, a natural number $K\leq \text{poly}_{\varphi,\psi_1, \psi_2, \psi_3,N}(\delta^{-1})$, and a partition $\mathcal{P}$ of $V^2$. For each $P\in \mathcal{P}$, let $c_P\in M^z$ be such that $P=\theta(x_1, x_2, c_P)\cap V$. Applying Theorem \ref{nipregularity} to $\theta(x_1, x_2, z)$ with $\delta':=\delta(6K^3)^{-1}$ in place of $\delta$, we obtain $\eta(x_1,w)\in L$ and a natural number $K'\leq \text{poly}_\theta((\delta')^{-1})\leq \text{poly}_{\varphi,\psi_1, \psi_2, \psi_3,N}(\delta^{-1})$ such that the following holds. For each $P\in\mathcal{P}$, there is a partition $\mathcal{Q}_P$ of $V$ of size at most $K'$, such that:
    \begin{enumerate}[(i)]
        \item for all $Q\in \mathcal{Q}_P$, there is $r\in M^w$ such that $Q=\eta(x_1, r)\cap V$;
        \item $\sum |Q_1\times Q_2|\geq (1-\delta')|V|^2$, where the sum ranges over all $(Q_1, Q_2)\in \mathcal{Q}_P^2$ such that $Q_1\times Q_2$ is $\delta'$-almost $P$-homogeneous (since $P=\theta(x_1, x_2, c_P)$).
    \end{enumerate}

    Let $R\subseteq M^w$ be minimal such that, for every $P\in\mathcal{P}$ and $Q\in\mathcal{Q}_P$, there is $r\in R$ such that $Q=\eta(x_1, r)\cap V$; then, $|R|\leq KK'\leq \text{poly}_{\varphi,\psi_1, \psi_2, \psi_3,N}(\delta^{-1})$. Let $\mathcal{Q}$ be the set of Boolean atoms of $\{\eta(x_1, r)\cap V: r\in R\}$. Since $T$ is NIP, $|\mathcal{Q}|\leq \poly_\eta (|R|)\leq \text{poly}_{\varphi,\psi_1, \psi_2, \psi_3,N}(\delta^{-1})$. Observe that for all $P\in\mathcal{P}$ we have $\sum |Q_1\times Q_2|\leq \delta'|V|^2$, where the sum ranges over all $(Q_1, Q_2)\in \mathcal{Q}^2$ such that $Q_1\times Q_2$ is not $\delta'$-almost $P$-homogeneous.

    Let $\mathcal{D}$ be the partition of $V^3$ induced by $\mathcal{P}$ and $\mathcal{Q}$. Observe that
    \[\sum_{D\in\mathcal{D}\text{ not }(\varphi,\delta)\text{-homogeneous}}|D|\leq I_1+I_2,\]
    where
    \begin{align*}
        I_1:=&\sum_{\substack{(P_1, P_2, P_3)\in\mathcal{P}^3\\P_1\wedge P_2 \wedge P_3\text{ not }\varphi\text{-homogeneous}}}|P_1\wedge P_2 \wedge P_3|,\\
        I_2:=&\sum_{(P_1, P_2, P_3)\in\mathcal{P}^3}\sum_{\substack{(Q_1, Q_2, Q_3)\in\mathcal{Q}^3, \text{ there is }i\in [3]\text{ with}\\\prod_{j\in [3], j\neq i}Q_j\text{ not }\delta'\text{-almost }P_i\text{-homogeneous}}}|Q_1\times Q_2\times Q_3|.
    \end{align*}

    By definition of $\mathcal{P}$ obtained from Corollary \ref{kSHDregcorfinite}, we have $I_1\leq (\delta/2)|V|^3$. For $I_2$, fix $P_1, P_2, P_3\in\mathcal{P}$ and investigate the inner sum. For each $i\in [3]$, we have
    \begin{align*}
        &\sum_{\substack{(Q_1, Q_2, Q_3)\in\mathcal{Q}^3,\\ \prod_{j\in [3], j\neq i}Q_j\text{ not }\delta'\text{-almost }P_i\text{-homogeneous}}}|Q_1\times Q_2\times Q_3|\\
        &= |V|\sum_{\substack{(Q_1, Q_2)\in\mathcal{Q}^2,\\Q_1\times Q_2\text{ not }\delta'\text{-almost }P_i\text{-homogeneous}}}|Q_1\times Q_2|\\
        &\leq \delta' |V|^3.
    \end{align*}
    
    Thus, $I_2\leq 3\delta'|V|^3K^3=(\delta/2)|V|^3$, so $\sum_{D\in\mathcal{D}\text{ not }(\varphi,\delta)\text{-homogeneous}}|D|\leq \delta |V|^3$ as required.
\end{proof}
We sketch how a tetrahedron counting lemma may be obtained for $(\varphi,\delta)$-homogeneity. Let $V\subseteq M^d$ be finite, $Q_1, Q_2, Q_3, Q_4\subseteq V$, and $P_{ij}\subseteq V^2$ for all $i,j\in [4]$ with $i<j$. For all $i,j,k\in [4]$ with $i<j<k$, define the simplicial complex $\Sigma_{ijk}:=(P_{jk}\wedge P_{ik}\wedge P_{ij})\cap (Q_i\times Q_j\times Q_k)$; we view this as the 3-partite 3-graph with vertex set $Q_i\sqcup Q_j\sqcup Q_k$, whose hyperedges are precisely the triangles in the graph $P_{jk}(Q_j,Q_k)\cup P_{ik}(Q_i,Q_k)\cup P_{ij}(Q_i, Q_j)$. Let $W$ be the 4-partite 3-graph given by $\Sigma_{123}\cup \Sigma_{124}\cup \Sigma_{134}\cup \Sigma_{234}$. We wish to show that the number of tetrahedra in $\varphi\cap W$ is approximately as `expected' if $\Sigma_{ijk}$ is $(\varphi,\delta)$-homogeneous for all $i,j,k$.

What is the `expected' number? For all $i,j\in [4]$ with $i<j$, let $d_{ij}$ be the density of $P_{ij}(Q_i, Q_j)$, and for all $i,j,k\in [4]$ with $i<j<k$, let $d_{ijk}$ be the relative density of $\varphi$ in $\Sigma_{ijk}$. If $\varphi$ were a random hyperedge relation, the expected number of tetrahedra in $\varphi\cap W$ would be $N_{\exp}:=(\prod_{ij}d_{ij}\prod_{ijk}d_{ijk})N_0$, where $N_0:=\prod_i|Q_i|$. Now, for all $i,j,k$, the simplicial complex $\Sigma_{ijk}$ is $\varphi$-homogeneous, so $d_{ijk}\in\{0,1\}$. If, for some $i,j,k$, $d_{ijk}=0$, then $\varphi\cap W$ contains no tetrahedra and $N_{\exp}=0$, so $\varphi\cap W$ contains exactly the correct number of tetrahedra. Suppose instead that $d_{ijk}=1$ for all $i,j,k$, so that $N_{\exp}=(\prod_{i,j}d_{ij})N_0$. For all $i,j$, $d_{ij}\in [0,\delta]\cup [1-\delta, 1]$. If $d_{ij}\in [0,\delta]$ for some $i,j$, then $N_{\exp}\leq\delta N_0$, and the number of tetrahedra in $\varphi\cap W$ is at most $\delta N_0$ (since a tetrahedron $(v_1, ..., v_4)\in Q_1\timesdots Q_4$ must have $v_{ij}\in P_{ij}$), thus differing from $N_{\exp}$ by at most $\delta N_0$. If $d_{ij}\in [1-\delta, 1]$ for all $i,j$, then $N_{\exp}\geq (1-6\delta)N_0$, and the number of tetrahedra in $\varphi \cap W$ is at least $(1-6\delta)N_0$ (since a non-tetrahedron $(v_1, ..., v_4)\in Q_1\timesdots Q_4$ must have $v_{ij}\not\in P_{ij}$ for some $i,j$), thus differing from $N_{\exp}$ by at most $6\delta N_0$. This shows that $\varphi\cap W$ contains approximately the expected number of tetrahedra.
\subsection{Future work: recovering strong $k$-distality}
By Theorem \ref{uniformkSHD}, the regularity lemma in Corollary \ref{kSHDregcorfinite} applies to all relations definable in an NIP strongly $k$-distal structure. We can ask if every relation $\varphi$ on a set $M$ satisfying this regularity lemma (without the definable data) is such that $(M,\varphi)$ admits an expansion that is NIP strongly $k$-distal.
\begin{defn}
    Let $\varphi(x_1, ..., x_{k+1})$ be a relation on a set $M$. Say that $\varphi$ \textit{satisfies the NIP strongly $k$-distal regularity lemma} if the following holds.

    For all $\delta \in (0,1]$, there is a natural number $K\leq \text{poly}_{\varphi}(\delta^{-1})$ such that for all finite $V\subseteq M^d$, there is a partition $\mathcal{P}$ of $V^k$ of size at most $K$ inducing a partition $\mathcal{Q}$ of $V^{k+1}$, given by
        \[\left\lbrace\left\lbrace w=(w_1, ..., w_{k+1})\in V^{k+1}: w_{\neq i}\in P_i\text{ for all } i\in [k+1]\right\rbrace: P_1, ..., P_{k+1}\in\mathcal{P}\right\rbrace,\]
    such that $\sum_{Q\in\mathcal{Q}\text{ }\varphi\text{-homogeneous}} |Q|\geq (1-\delta) |V|^{k+1}$.
\end{defn}

\begin{problem}\label{qnrecoverkdistality}
    Let $\varphi(x_1, ..., x_{k+1})$ be a relation on a set $M$ that satisfies the NIP strongly $k$-distal regularity lemma. Must $(M,\varphi)$ admit an expansion that is NIP strongly $k$-distal? What if we assume that $(M, \varphi)$ is NIP?
\end{problem}
Note that, by \cite[Theorem 5.6]{mypaperblms}, when $k=1$ and $(M,\varphi)$ is not assumed to be NIP, the answer to the first question is negative.

In \cite{mypaperblms}, we also showed that relations satisfying the (NIP strongly $1$-)distal regularity lemma already enjoy a particular property of (relations definable in) distal structures, namely, improved Zarankiewicz bounds. We can ask if a similar phenomenon occurs with the NIP strongly $k$-distal regularity lemma.

\begin{problem}\label{qnpartiallyrecoverkdistality}
    Let $\varphi(x_1, ..., x_{k+1})$ be a relation on a set $M$ that satisfies the NIP strongly $k$-distal regularity lemma. Investigate the (combinatorial) properties of $\varphi$.
\end{problem}
\bibliographystyle{plainurl}
\bibliography{bib}
\end{document}